\documentclass[11pt,a4paper]{article}

\usepackage[USenglish]{babel}
\usepackage{booktabs}
\usepackage[T1]{fontenc}
\usepackage[utf8]{inputenc}
\usepackage{lmodern}
\usepackage{fullpage}
\usepackage{microtype}
\usepackage{csquotes}
\usepackage{authblk}
\usepackage{xcolor}
\usepackage{graphicx}
\usepackage{hyperref}
\usepackage{float}
\usepackage{array}
\usepackage{subfigure}
\usepackage[mathscr]{euscript}
\usepackage{enumitem}
\usepackage{amsmath}
\usepackage{mathtools}
\usepackage{amsthm}
\usepackage{thmtools}
\usepackage{thm-restate}
\usepackage{dsfont}
\usepackage{amssymb}
\usepackage[noabbrev]{cleveref}
\usepackage{crossreftools}
\usepackage{framed}
\usepackage{soul}
\usepackage{comment}

\declaretheorem[style=plain,parent=section,title=Theorem,refname={Theorem,Theorems}]{theo}
\declaretheorem[style=plain,sibling=theo,title=Proposition,refname={Proposition,Propositions}]{prop}

\declaretheorem[style=plain,sibling=theo,title=Lemma,refname={Lemma,Lemmas}]{lem}
\declaretheorem[style=definition,sibling=theo,title=Definition,refname={Definition,Definitions}]{defin}
\declaretheorem[style=plain,parent=section,title=Remark,refname={Remark,Remarks}]{rem}
\newtheorem{example}{\textbf{Example}}[section]

\newlist{myenum}{enumerate}{1}
\setlist[myenum]{label={\bf\roman*)},ref={\bf(\roman*)}}
\newlist{mylist}{itemize}{1}
\setlist[mylist]{label=\textbullet}
\newlist{Hassum}{enumerate}{1}
\setlist[Hassum]{label={\bf(H\arabic*)}}
\crefname{Hassumi}{condition}{conditions}

\crefrangelabelformat{Hassumi}{#3#1--#2#4}
\newcommand{\onlyref}[1]{%
    \begingroup
    \def\crefrangeconjunction{-}%
    \labelcref{#1}%
    \endgroup
}

\newcommand{\email}[2][]{\textsuperscript{#1}\href{mailto:#2}{\texttt{#2}}}

\newcommand{\RR}{\mathbb{R}}
\newcommand{\NN}{\mathbb{N}}

\newcommand{\ZZ}{\mathbb{Z}}

\newcommand{\Ix}{\mathcal{I}_{\Delta x}}

\newcommand{\It}{\mathcal{I}_{\Delta t}}

\newcommand{\CC}{C}
\newcommand{\BB}{\mathcal{B}}
\newcommand{\GG}{\mathcal{G}}
\newcommand{\DD}{\mathcal{D}}

\newcommand{\eps}{\epsilon}

\newcommand{\diver}{{\rm{div}}}
\newcommand{\ov}[1]{\overline{#1}}

\def\supp{\mathop{\rm supp}}
\def\P{\mathcal{P}}

\def\A{\mathcal{A}}

\def\dd{{\rm d}}
\def\co{{\rm co}}

\title{
First-Order Mean Field Games with Time-Measurable and $ C ^{1,1}$ Hamiltonians: Analysis and Approximation
}
\author[1]{Elisabetta Carlini}
\author[2]{Valentina Coscetti}
\author[3]{Francisco J. Silva}
\affil[1]{Sapienza, University of Rome, Italy. \textit{Email address:} \email{carlini@mat.uniroma1.it}}
\affil[2]{Sapienza, University of Rome, Italy. \textit{Email address:} \email{valentina.coscetti@uniroma1.it}}
\affil[3]{Institut de recherche XLIM-DMI, UMR-CNRS 7252, Faculté des Sciences et Techniques, Université de Limoges, 87060
Limoges, France. \textit{Email address:} \email{francisco.silva@unilim.fr}}
\date{}

\begin{document}
\maketitle

\begin{abstract}
We introduce and analyze a numerical framework for a class of first-order, 
non-local Mean Field Game (MFG) systems with two sources of low regularity: the 
time-dependent data are merely measurable, and the Hamiltonian is only of class 
$\CC^{1,1}$ with respect to the momentum variable. The proposed method combines 
a semi-Lagrangian approximation of the Hamilton--Jacobi--Bellman equation with a 
Lagrange--Galerkin discretization of the continuity equation. Temporal 
measurability is handled by averaging the data over the time-grid intervals and 
using the stability theory of $L^1$-viscosity solutions. The limited regularity 
of the Hamiltonian is addressed through arguments from nonsmooth analysis, 
based in particular on Clarke's generalized differential calculus. We prove 
that the discrete solutions converge, up to some subsequence, to a solution of the 
MFG system. As a consequence, the numerical approximation also provides a 
constructive existence result. Under a monotonicity assumption on the coupling terms, we show that the solution to the MFG system is unique, implying the convergence of the whole sequence of discrete solutions. Finally, numerical experiments in one and two spatial dimensions illustrate the robustness of the method, including a  linear--quadratic example with a running cost that is discontinuous in time and for which we compute an exact solution.
\end{abstract}

  \medskip
 \noindent{\textbf{AMS-Subject Classification: 91A16, 49N80, 35Q89, 65M12, 65M25.}}\\
    \noindent{\textbf{Keywords}: Mean field games,
Semi-Lagrangian scheme,
Lagrange–Galerkin method,
$L^1$-viscosity solutions,
Measurable-in-time data, nonsmooth analysis}  

 \paragraph{Acknowledgments.}
 The first two  authors were partially supported  by Italian Ministry of Instruction, University and Research
(MIUR) (PRIN Project 2022 2022238YY5, ``Optimal control problems: analysis, approximation''). The first and third  authors were  partially supported  by KAUST through the subaward agreement ORA-2021-CRG10-4674.6.
 The second author was partially supported by 
 INdAM - GNCS Project codice CUP$\_$E53C25002010001, by Avvio alla Ricerca codice CUP$\_$B83C25004300005, 
and by European Union - Next Generation EU, Missione 4, Componente 1, CUP$\_$  B53C23002010006.
 The third author was partially supported by l'Agence
Nationale de la Recherche (ANR), project ANR-22-CE40-0010, and by Minist\`ere de l'Europe et des Affaires \'etrang\`eres (MEAE), project MATH AmSud 23-MATH-17.

\section{Introduction}

The mathematical framework of Mean Field Games (MFGs), introduced independently 
by Lasry and Lions~\cite{LasryLions07} and by Huang, Malhamé, and Caines~\cite{HMC06}, provides a powerful tool 
for characterizing Nash equilibria in symmetric differential games involving a 
continuum of rational agents. The evolution of such systems is typically governed 
by a coupled system consisting of a backward Hamilton--Jacobi--Bellman (HJB) 
equation, which determines the value function and the optimal strategies, and a 
forward Fokker--Planck equation, which describes the evolution of 
the probability distribution of the population.

The numerical approximation of MFG systems has developed into a broad and 
active research area; see \cite{AchdouLauriere20,Lauriere21} for general 
overviews. For second-order MFG systems with nondegenerate diffusion, 
finite-difference schemes have been extensively analyzed; see, for instance, 
\cite{AchdouCapuzzo10,AchdouCamilliCapuzzo13,AchdouPorretta16,
AlmullaFerreiraGomes17,BonnansPfeiffer23}. Semi-Lagrangian and 
Lagrange--Galerkin methods provide an alternative class of discretizations 
\cite{CarliniSilva2015,CarliniSilva18,CalzolaCarliniSilva2024}, with extensions 
to equations involving non-local operators and fractional diffusion 
\cite{Chowdhury_Jakobsen23}. To address high-dimensional problems, machine-learning approaches, including 
deep-learning and reinforcement-learning methods, have also been developed 
\cite{RuthottoOsherLiNurbekyanFung20,Carmona21,Carmona22,Lauriere22}, along with 
low-rank methods based on Tensor Train representations 
\cite{CarliniSaluzzi26}. 
Finite element methods constitute another important direction for 
second-order MFG systems, especially in connection with Hamiltonians of limited 
regularity. Since this issue is directly related to one of the main analytical 
difficulties considered in the present work, we discuss the relevant 
contributions below.

For first-order MFG systems, the underlying agent dynamics are deterministic and 
the forward equation reduces to a continuity equation. Several approximation 
strategies have been proposed, including semi-Lagrangian schemes 
\cite{camsilva12,CarliniSilva2014,AshrafyanGomes,CarliniCoscetti26}, 
Lagrange--Galerkin discretizations \cite{CarliniSilvaZorkot}, and approximations 
through discrete-time finite-state MFGs 
\cite{HadikhanlooSilva,GianattiSilva2024,GianattiSilvaZorkot2024}.
Related finite-state formulations and monotonicity-based numerical
methods were developed in \cite{GomesMohrSouza10,GomesSaude21}. Fourier-based 
methods and related optimization techniques have also been developed~\cite{NurbekyanSaude18,Liu21}. More recently, 
neural-network approaches designed for high-dimensional deterministic MFGs have 
combined learned approximations of the value function and of the characteristic 
flow with fictitious-play iterations 
\cite{AssouliGianattiMissaouiSilva25}.
A recent particle approximation, based on an implicit time
discretization and a spatial
sampling of the characteristic Hamiltonian system, has also been analyzed under
displacement monotonicity in \cite{MeszarosOsborne26}.

In this work, we consider the following first-order MFG system with
non-local couplings, \textit{measurable time-dependent data}, and a Hamiltonian
of class $\CC^{1,1}$ with respect to its third variable:
\begin{equation}
\label{eq:MFG}
\tag{MFG}
\left\{
\begin{aligned}
-\partial_t v+H(t,x,D_xv)
    &=f(t,x,m(t))
    \quad\textrm{in }\RR^d\times(0,T),\\
v(x,T)
    &=g(x,m(T))
    \quad\textrm{in }\RR^d,\\
\partial_t m-\diver\Bigl(D_pH(t,x,D_xv)m\Bigr)
    &=0
    \quad\textrm{in }\RR^d\times(0,T),\\
m(0)&=m_0^*
    \quad\textrm{in }\RR^d.
\end{aligned}
\right.
\end{equation}
The Hamiltonian $H\colon[0,T]\times\RR^d\times\RR^d\to\RR$ is defined by
$$
H(t,x,p)
\coloneqq
\sup_{a\in A}
\bigl\{a\cdot p-\ell(t,x,a)\bigr\}
\quad
\textrm{for a.e. }t\in[0,T]
\textrm{ and for all }(x,p)\in\RR^d\times\RR^d,
$$
where $T>0$, $A\subset\RR^d$ is a nonempty convex compact set, $\ell\colon[0,T]\times\RR^d\times A\to\RR$, $f\colon[0,T]\times\RR^d\times\P_1(\RR^d)\to\RR$, $g\colon\RR^d\times\P_1(\RR^d)\to\RR$, and $m_0^*\in\P_1(\RR^d)$. Here, $\P_1(\RR^d)$ denotes the space of probability measures on $\RR^d$ with 
finite first moment. We assume, in particular, that
$\ell(t,x,\cdot)$ is uniformly strongly convex. Under the regularity assumptions
specified below, this property, together with the convexity and compactness of
$A$, implies that $H(t,x,\cdot)$ is of class $\CC^{1,1}$. This differs from the
smoother framework used in previous analyses of~\eqref{eq:MFG}, such as
\cite{CarliniSilvaZorkot}, where $A=\RR^d$ and the assumptions on the running
cost yield a Hamiltonian of class $\CC^2$ with respect to $p$. The time-dependent data are allowed to be merely measurable 
with respect to the time variable.

The convergence analysis must address two distinct and equally
significant sources of low regularity, which affect different components of the
system and call for complementary tools. Temporal measurability concerns the
consistency of the semi-Lagrangian approximation of the HJB equation, whereas
the $\CC^{1,1}$ regularity of the Hamiltonian with respect to $p$ requires a
nonsmooth treatment of the characteristic flow in the continuity equation. We
describe these two analytical ingredients separately below before combining
them to prove convergence of the full numerical approximation and, as a
consequence, existence of solutions to~\eqref{eq:MFG}.

The first source of difficulty is the low temporal regularity of the data. 
Allowing Hamiltonians and couplings that are merely measurable in time is 
important for models involving abrupt regime changes, switching dynamics, or 
rough exogenous signals. It also avoids imposing an artificial temporal 
regularization on the underlying control problem. Hamilton--Jacobi equations 
with time-measurable or discontinuous Hamiltonians have been studied through 
several generalized notions of viscosity solutions; see, among others, 
\cite{Ishii85,BarronJensen87,LionsPerthame87,BrianiRampazzo05,Barles2006}. In 
this work, we rely on the notion of $L^1$-viscosity solution introduced in 
\cite[Definition 7.1]{Ishii85}, which provides a natural framework for studying 
stability under weak temporal regularity.

Our analysis is also motivated by the recent work 
\cite{CamilliFestaMarzufero} on approximation schemes for linear viscous 
transport and conservative equations with measurable coefficients. By 
combining $L^1$-viscosity solutions with suitable notions of duality solution, 
the authors proved that convergence can be retained despite the absence of 
temporal continuity. Extending these ideas to MFG systems is substantially more 
involved because of the nonlinear coupling between the HJB and continuity 
equations.

We introduce a new semi-Lagrangian scheme adapted to HJB equations with 
measurable-in-time data. Since the values of measurable coefficients sampled at 
the time-grid nodes need not converge as the time step vanishes, the usual 
pointwise-in-time consistency argument is no longer available. We overcome this 
difficulty by averaging the data over each time interval and by establishing 
consistency in an $L^1$-in-time sense. This formulation is naturally adapted to 
the stability theory of $L^1$-viscosity solutions and allows us to prove the 
convergence of the semi-Lagrangian approximation under minimal temporal 
regularity assumptions.

The second, equally central, difficulty concerns the consequences of
the $\CC^{1,1}$ regularity of the Hamiltonian with respect to the momentum
variable. Bounded control constraints arise naturally in models involving
bounded velocities or bounded control efforts and may lead to Hamiltonians that
are convex and globally Lipschitz with respect to $p$, without being twice
continuously differentiable. In the second-order setting, MFG systems with possibly 
nondifferentiable Hamiltonians have been analyzed in 
\cite{OsborneSmears24,OsborneSmears25}, where the Fokker--Planck equation is 
formulated as a partial differential inclusion involving the convex 
subdifferential of the Hamiltonian. The approximation of such systems by means 
of $\CC^{1,1}$ regularizations is studied in 
\cite{OsborneSmearsRegularization25}. Moreover, the stability framework 
introduced in \cite{MR3735559} has led to quantitative and quasi-optimal error 
estimates for finite element approximations of stable equilibria, both in 
stationary and time-dependent settings and under limited smoothness assumptions; 
see \cite{BerryLeySilva25,BerryLeySilva25Nonsmooth,Berry25Semidiscrete}. 
Particular MFG models with bounded velocities and nonsmooth Hamiltonians have 
also been considered in 
\cite{DucasseMazantiSantambrogio22,SadeghiArjmandMazanti22}.

In our first-order setting, the characteristic velocity
$D_pH(t,x,D_xv)$ is single-valued, but the second derivative of $H$ need not
exist in the classical sense. This intermediate regularity is sufficient to
formulate the continuity equation in the usual form, but the smooth
second-order expansions underlying the analysis of the Lagrange--Galerkin
scheme in \cite{CarliniSilvaZorkot} are no longer available. We therefore
develop nonsmooth arguments specifically adapted to the discrete characteristic
flow. These arguments rely on Clarke's generalized differential calculus
\cite{Clarke} and allow us to establish the consistency, stability, and
compactness properties required for the convergence analysis of the
Lagrange--Galerkin approximation.

By coupling the semi-Lagrangian approximation of the HJB equation with the 
nonsmooth Lagrange--Galerkin approximation of the continuity equation, we obtain 
a numerical scheme for the full system \eqref{eq:MFG}. We establish estimates 
that are uniform with respect to the discretization parameters and prove the 
compactness of the corresponding family of discrete solutions. We then show 
that, as the discretization parameters vanish, every sequence of discrete 
solutions admits a subsequence converging to a solution of \eqref{eq:MFG}. In 
particular, the limit of the numerical approximations provides an existence 
result for~\eqref{eq:MFG}. Thus, the proposed 
scheme is not only a convergent numerical method: it also yields a constructive 
existence theory under the simultaneous presence of merely measurable time 
dependence and low regularity of the Hamiltonian. If the coupling terms satisfy a monotonicity assumption, we prove that the solution to~\eqref{eq:MFG} is unique and, hence, the whole sequence of discrete solutions converges to this solution.

Finally, we present numerical experiments in one and two spatial
dimensions illustrating the performance of the proposed scheme, including a
linear--quadratic example with a running cost that is discontinuous in time and
for which we compute an exact solution. This example provides a quantitative
benchmark for the method. The numerical results show excellent agreement with
the exact solution and further illustrate the robustness of the scheme in the
presence of discontinuous data.

The paper is organized as follows. Section~2 introduces the notation and the main assumptions on the data. Section~3 addresses the well-posedness of \eqref{eq:MFG} in the framework of time-measurable data. Section~4, which is the core of the paper, develops a numerical approximation of \eqref{eq:MFG} by combining a semi-Lagrangian scheme for the HJB equation with a Lagrange--Galerkin discretization of the continuity equation. Finally, Section~5 presents the numerical experiments.
\section{Preliminaries and Assumptions}\label{sec: pre}

This section is devoted to establishing the mathematical foundations for our analysis. We first fix the notation and subsequently, we state the structural assumptions on the data.

 Let $d\in\NN$. For every  $x,y \in \RR^d$, we denote by $ x\cdot y$ their standard inner product and by $|\cdot|$ its induced norm. We write $B(0,R)$ and $\ov{B}(0,R)$  for the associated open and closed balls, respectively, centered at the origin and radius $R>0$. We denote by  $|\cdot|_{\infty}$ the maximum norm in $\RR^d$, and we write $B_{\infty}(0,R)$ and $\ov{B}_{\infty}(0,R)$ for the corresponding open and closed balls, respectively, centered at the origin and radius $R>0$. We endow $\P_{1}(\RR^{d})$ with the Kantorovich–Rubinstein distance 
\begin{equation*} \label{e:d_1_alternative} 
d_1(\mu,\tilde \mu)=\sup\bigg\{\int_{\RR^d}\varphi(x)\dd[\mu-\tilde \mu](x)\,\Big|\,\varphi\colon \RR^d \to \RR  \text{ is 1- Lipschitz } \bigg\}.
\end{equation*} 
For every  $\mu\in\P_1(\RR^{d})$ we denote by $\supp(\mu)$ its support. In what follows, given an absolutely continuous measure (w.r.t. the Lebesgue measure in $\RR^{d}$)  $\mu\in \P_1(\RR^d)$,  we still denote by $\mu$ its density. For every set $X$ and every normed vector space $Y$, we denote by
$\mathcal{B}(X;Y)$ the space of bounded functions from $X$ to $Y$.
If $X$ is also a normed vector space and $k$ is a nonnegative integer, we
denote by $\CC_b(X;Y)$, $\CC^k(X;Y)$, and $\CC_c^k(X;Y)$ the spaces of
bounded continuous functions, functions of class $\CC^k$, and compactly
supported functions of class $\CC^k$ from $X$ to $Y$, respectively.
When $Y=\RR$, we omit the codomain and simply write
$\mathcal{B}(X)$, $\CC_b(X)$, $\CC^k(X)$, and $\CC_c^k(X)$. We also introduce the notation $\|M\| \coloneqq  \sup_{\substack{x \in \mathbb{R}^d}} \{|Mx|\,\,|\,\,|x|=1\}$
for the matrix $2$-norm of any matrix $M \in \mathbb{R}^{d \times d}$.

We consider the following assumptions: 
\begin{Hassum}
\item \label{hyp:terminal cost} 
The coupling cost $f$ is measurable and $ f(t,x,\cdot)$, for a.e. $t \in [0,T]$ and every $x\in \RR^d$,  is continuous in $\P_{1}(\RR^{d})$. The terminal cost $g(x,\cdot)$  is continuous in $\P_{1}(\RR^{d})$. Moreover, there exist some constants $C_{f,i},C_{g,i}>0$ ($i=1,2,3$) such that for a.e. $t\in [0,T]$ and for every  $x,y\in \RR^d $ and $\mu\in\P_1(\RR^d)$,
       \begin{align}
           |g(x,\mu)|&\leq C_{g,1},\label{eq: bound g}\\ 
           |g(x,\mu)-g(y,\mu)|&\leq C_{g,2}|x-y|,\label{eq: lip g}\\ 
           g(x+y,\mu)-2g(x,\mu)+g(x-y,\mu)&\leq C_{g,3}|y|^2,\label{eq: sconc g}\\
            |f(t,x,\mu)|&\leq C_{f,1},\label{eq: bound f}\\ 
           |f(t,x,\mu)-f(t,y,\mu)|&\leq C_{f,2}|x-y|,\label{eq: lip f}\\ 
           f(t,x+y,\mu)-2f(t,x,\mu)+f(t,x-y,\mu)&\leq C_{f,3}|y|^2. \label{eq: sconc f}
       \end{align}
              
\item  \label{hyp: compact set} $A\subset \RR^d$ is a nonempty convex and compact set.
\medskip\\
In what follows, we denote by $C_{\rm b}>0$ a constant such that $A\subseteq \ov B(0, C_{\rm b})$. 
 \item \label{hyp: lagrangian new}
 The running cost $\ell$ is measurable and such that $\ell(t,\cdot,\cdot)\in \CC^2(\RR^d\times U)$, for some open neighborhood $U$ of $A$, for a.e. $t\in [0,T]$. Moreover there exist some constants   $\ov C_{\ell,i}>0$ ($i=1,\dots,5$) such that  for a.e. $t\in [0,T]$, and for every  $(x,a) \in  \RR^d\times A$
\begin{align}
|\ell(t,x,a)|&\leq \ov  C_{\ell,1}, \label{eq: bounded L}\\ 
|D_{x}\ell(t,x,a)|&\leq \ov C_{\ell,2},\label{eq: lip L}
\\
D ^{2}_{aa}\ell(t,x,a)b\cdot b&\geq \ov C_{\ell,3}|b|^{2} \quad \text{for all } b \in \RR^d,
\label{eq: convex L}\\
 D ^{2}_{xx}\ell(t,x,a)y\cdot y &\leq \ov C_{\ell,4}|y|^2\quad  \text{for all } y \in   \RR^d,\label{eq: semiconc L}\\ 
| D_a\ell(t,x_1,a)- D_a\ell(t,x_2,a)|&\leq  \ov C_{\ell,5}|x_1-x_2|\quad \text{for all } x_1,\,x_2\in\RR^{d}. \label{eq: DaL lip}
\end{align}

\item \label{hyp: initial measure} The initial condition $m^*_{0}\in \P_{1}(\RR^d)$ is absolutely continuous with respect to the Lebesgue measure, with density still denoted by $m^*_{0}$, and there exist $C_0^*>0$ and $\mathsf{p}\in(1,+\infty]$ such that $\supp (m^*_{0})\subseteq \ov B(0,C_0^*)$ and $||m^*_{0}||_{L^{\mathsf{p}}(\RR^d)}\leq C_0^*$.  

\item \label{hyp: monoticityassumption} The following Lasry--Lions strict-monotonicity condition for $f$ holds: for a.e. $t\in [0,T]$        
\begin{equation*}
\int_{\RR^{d}}\Big(f(t,x,\mu_1)-f(t,x,\mu_2)\Big)\dd[\mu_1-\mu_2](x)> 0 \quad \text{for all }  \mu_1,\mu_2\in \P_{1}(\RR^d), \, \mu_1\neq\mu_2,
\end{equation*}
 and the following monotonicity condition for $g$ holds         \begin{equation*}
  \int_{\RR^{d}}\Big(g(x,\mu_1)-g(x,\mu_2)\Big)\dd[\mu_1-\mu_2](x)\geq 0 \quad \text{for all }  \mu_1,\, \mu_2\in \P_{1}(\RR^d).
\end{equation*}
\end{Hassum} 
\begin{example}
Following{\rm~\cite[Example 4.1]{cannarsa_capuani_2018}}, let us consider a continuous strictly increasing function $h\colon[0,\infty)\to\RR$ and a smooth, even, nonnegative, bounded, and integrable kernel $\phi\colon\RR^{d}\to\RR$. For $\mu\in\P_{1}(\RR^{d})$, set $(\phi\ast\mu)(y):=\int_{\RR^{d}}\phi(y-z)\dd\mu(z)$ and define 
$$
f(t,x,\mu):=\int_{\RR^{d}}h\bigl((\phi\ast\mu)(y)\bigr)\phi(y-x)\dd y\quad\text{for all }(t,x,\mu)\in[0,T]\times\RR^{d}\times\P_{1}(\RR^{d}).
$$
Notice that this coupling functional does not depend on $t$ and, for every $\mu_{1},\mu_{2}\in\P_{1}(\RR^{d})$, Fubini's theorem yields
$$
\begin{aligned}
&\int_{\RR^{d}}\bigl(f(t,x,\mu_{1})-f(t,x,\mu_{2})\bigr)\dd(\mu_{1}-\mu_{2})(x)
\\
&\quad=\int_{\RR^{d}}\Bigl[h\bigl((\phi\ast\mu_{1})(y)\bigr)-h\bigl((\phi\ast\mu_{2})(y)\bigr)\Bigr]\Bigl[
(\phi\ast\mu_{1})(y)-(\phi\ast\mu_{2})(y)\Bigr]\dd y\geq0.
\end{aligned}
$$
Moreover, since $h$ is strictly increasing, equality holds above only if $\phi\ast\mu_{1}=\phi\ast\mu_{2}$ almost everywhere in $\RR^{d}$, and hence everywhere by continuity. Consequently, $f$ is strictly monotone whenever the map $\P_{1}(\RR^{d})\ni\mu\longmapsto\phi\ast\mu\in C(\RR^{d})$ is injective. This is the case, for instance, for the Gaussian kernel $\RR^{d}\ni z\mapsto \phi_{\eps}(z) := \exp\left(-|z|^{2}/(2\eps^{2})\right)\in\RR$, where $\eps>0$. Indeed, there exists $c_{d,\eps}>0$ such that $\widehat{\phi_{\eps}}(\xi)=c_{d,\eps}\exp(-\eps^{2}|\xi|^{2}/2)>0$ for all $\xi\in\RR^{d}$. Therefore, $\phi_{\eps}\ast\mu_{1}=\phi_{\eps}\ast\mu_{2}$ implies $\widehat{\mu_{1}}=\widehat{\mu_{2}}$, and hence $\mu_{1}=\mu_{2}$. 
\end{example}

We collect in the following results some easy consequences of~\ref{hyp: compact set} and  \ref{hyp: lagrangian new}.

\begin{prop}\label{rem: properties H}Assume \ref{hyp: compact set} and  \ref{hyp: lagrangian new}. Then the following hold:
 \begin{enumerate}[label={\rm(\roman*)},     ref={\rm(\roman*)}]
    \item \label{item: convex H} For every $(x,p)\in\mathbb{R}^d\times\mathbb{R}^d$, the map 
$t\mapsto H(t,x,p)$ admits a measurable representative on $[0,T]$.
Moreover, for a.e.\ $t\in[0,T]$ and every $x\in\mathbb{R}^d$, 
the map $p\mapsto H(t,x,p)$ is convex.
        \item \label{item: bound H} For a.e. $t \in [0,T]$ and for every  $x\in \RR^d$ 
\begin{equation*}
   |H(t,x,p)|\leq C_{\rm b}|p|+\ov C_{\ell,1}\quad\text{for all }p\in\RR^{d}.
\end{equation*}  
   \item    \label{item: Lip space H} For a.e. $t \in [0,T]$ and for every  $ p\in \RR^d$   
   \begin{equation*}
         | H(t,x_1,p)-H(t,x_2,p)|\leq \ov C_{\ell,2}|x_1-x_2| \quad \text{ for all }  x_1,\, x_2\in \RR^d.
     \end{equation*}   
      \item  \label{item: Lip grad H} For a.e. $t \in [0,T],$  and for every  $ x\in \RR^d$  
           \begin{equation*}
         | H(t,x,p_1)- H(t,x,p_2)|\leq C_{\rm b}|p_1-p_2| \quad \text{ for all } p_1,\,  p_2 \in \RR^d.
     \end{equation*}
        \item \label{item: properies DpH}The following hold: 
 \begin{enumerate}[label={\rm(\alph*)},     ref={\rm(\roman*)}]\item \label{item: diff H}  For a.e. $t\in[0,T]$ and every $x\in\RR^d$, the map $p\mapsto H(t,x,p)$ is differentiable on $\RR^d$.
\item \label{item: meas DpH}  The map $t\mapsto D_pH(t,x,p)$ admits a measurable representative on $[0,T]$.
\item \label{item: bound DpH}  For a.e. $t\in[0,T]$ and every $x,p\in\RR^d$, we have $|D_pH(t,x,p)|\leq C_{\rm b}$.
\item\label{item: Lip DpH}   There exists $c_H>0$ such that, for a.e.
$t\in[0,T]$,
\begin{equation}\label{eq:Lips_gradient}
|D_pH(t,x_1,p_1)-D_pH(t,x_2,p_2)|\leq c_H\bigl(|x_1-x_2|+|p_1-p_2|\bigr)
\end{equation}
for all $x_1,x_2,p_1,p_2\in\RR^d$.
\end{enumerate}
\end{enumerate}
\end{prop}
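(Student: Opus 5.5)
The plan is to regard $H(t,x,\cdot)$ as the Legendre--Fenchel conjugate of $\ell(t,x,\cdot)+\chi_A$, where $\chi_A$ is the convex indicator of $A$. Everything then follows from elementary convex analysis together with a parametric optimization argument for the maximizer. We fix a full-measure set $E\subset[0,T]$ on which all the properties of \ref{hyp: lagrangian new} hold. Outside $E$ we redefine $H$ by $0$, which is harmless since all claims are a.e. in $t$.

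For \ref{item: convex H}, we use that $A$ is compact and $\ell(t,x,\cdot)$ is continuous, so the sup may be taken over a countable dense subset $\{a_k\}\subset A$. The map $t\mapsto H(t,x,p)$ is then a countable supremum of measurable functions, hence measurable. Convexity in $p$ holds because $H(t,x,\cdot)$ is a supremum of affine functions.

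Items \ref{item: bound H}--\ref{item: Lip grad H} all come from the estimate $|\sup_a F_1-\sup_a F_2|\le\sup_a|F_1-F_2|$.
\begin{itemize}
\item For \ref{item: bound H}, compare with $F_2\equiv0$ and use $|a\cdot p|\le C_{\rm b}|p|$ together with \eqref{eq: bounded L}.
\item For \ref{item: Lip space H}, use $|\ell(t,x_1,a)-\ell(t,x_2,a)|\le \ov C_{\ell,2}|x_1-x_2|$, which follows from \eqref{eq: lip L} and the mean value theorem.
\item For \ref{item: Lip grad H}, use $|a\cdot(p_1-p_2)|\le C_{\rm b}|p_1-p_2|$.
\end{itemize}

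For \ref{item: properies DpH}, we argue as follows.
\begin{itemize}
\item By \eqref{eq: convex L} and convexity of $A$, the function $a\mapsto a\cdot p-\ell(t,x,a)$ is strongly concave on $A$. It therefore has a unique maximizer $a^*(t,x,p)$.
\item Danskin's theorem, or the fact that the subdifferential of a conjugate is the set of maximizers, gives differentiability in $p$ with $D_pH(t,x,p)=a^*(t,x,p)\in A$. This proves (a) and (c).
\item For (b), $a^*$ is the limit of measurable approximate maximizers. Alternatively, $D_pH$ is the limit of the difference quotients $n\bigl(H(t,x,p+e_i/n)-H(t,x,p)\bigr)$, which are measurable in $t$ by \ref{item: convex H}. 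The latter route is cleaner, and we will use it.
\end{itemize}

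The main step is (d), the Lipschitz estimate for $a^*$. Write $a_i=a^*(t,x_i,p_i)$. The first-order optimality conditions are variational inequalities:
\begin{equation*}
\bigl(p_i-D_a\ell(t,x_i,a_i)\bigr)\cdot(b-a_i)\le0\quad\text{for all } b\in A,\ i=1,2.
\end{equation*}
We take $b=a_2$ in the first and $b=a_1$ in the second, and add the two inequalities. This yields
\begin{equation*}
\bigl(D_a\ell(t,x_1,a_1)-D_a\ell(t,x_2,a_2)\bigr)\cdot(a_1-a_2)\le (p_1-p_2)\cdot(a_1-a_2).
\end{equation*}
Next we split
\begin{equation*}
D_a\ell(t,x_1,a_1)-D_a\ell(t,x_2,a_2)=\bigl[D_a\ell(t,x_1,a_1)-D_a\ell(t,x_1,a_2)\bigr]+\bigl[D_a\ell(t,x_1,a_2)-D_a\ell(t,x_2,a_2)\bigr].
\end{equation*}
Since $A$ is convex, the segment between $a_1$ and $a_2$ lies in $A$, so \eqref{eq: convex L} bounds the first bracket tested against $a_1-a_2$ from below by $\ov C_{\ell,3}|a_1-a_2|^2$. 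The second bracket is bounded by \eqref{eq: DaL lip}. Combining these gives
\begin{equation*}
\ov C_{\ell,3}|a_1-a_2|^2\le \bigl(|p_1-p_2|+\ov C_{\ell,5}|x_1-x_2|\bigr)|a_1-a_2|,
\end{equation*}
so \eqref{eq:Lips_gradient} holds with $c_H=\max(1,\ov C_{\ell,5})/\ov C_{\ell,3}$.

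The only delicate points are the measurability statements, which depend on choosing a good representative, and the need for the optimality conditions to hold at boundary points of $A$. The latter is why we use variational inequalities rather than the equation $D_a\ell=p$; the open neighborhood $U$ of $A$ in \ref{hyp: lagrangian new} guarantees that $D_a\ell$ is defined at such points.
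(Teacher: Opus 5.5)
Your proposal is correct and follows essentially the same route as the paper. Both use a countable dense subset of $A$ for measurability, elementary sup estimates for \ref{item: bound H}--\ref{item: Lip grad H}, and Danskin's theorem with difference quotients for (a)--(c). For (d), both sum the two variational inequalities at the maximizers and combine \eqref{eq: convex L} with \eqref{eq: DaL lip}, which gives the same bound $|a_1^*-a_2^*|\leq \ov C_{\ell,3}^{-1}\bigl(|p_1-p_2|+\ov C_{\ell,5}|x_1-x_2|\bigr)$.
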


\begin{proof} 
Let us show \ref{item: convex H}. 
Since $A$ is compact, it is separable. Hence, there exists a countable dense subset $A_0$ of $A$. We can write
\begin{equation*}
  H(t,x,p) = \sup_{a\in A_0}\{a\cdot p - \ell(t,x,a)\},
\end{equation*}
where the restriction to $A_0$ is justified  by the continuity of $a\mapsto \ell(t,x,a)$ given in \ref{hyp: lagrangian new}.
Since, for every $a\in A$, the function 
$t\mapsto a\cdot p - \ell(t,x,a)$ is measurable by \ref{hyp: lagrangian new}, 
$t\mapsto H(t,x,p)$ is measurable as a countable supremum of 
measurable functions.
The convexity of $p\mapsto H(t,x,p)$ follows directly from the definition.

Assertions \onlyref{item: bound H,item: Lip space H,item: Lip grad H} follow easily from \ref{hyp: compact set} and  \ref{hyp: lagrangian new}.

Let us show \ref{item: properies DpH}. The existence of $D_pH(t, x,p)$ and the bound $|D_pH(t, x,p)|\leq C_{\rm b}$ for a.e. $t\in[0,T]$ and for every  $x,\,p\in\RR^{d}$ follow directly from the compactness of $A$, the strict convexity of $a\mapsto \ell(t,x,a)$, and Danskin's theorem (see, e.g.,~\cite[Theorem 4.13]{MR1756264}). Since, for every $i=1,\ldots,d$ and for a.e. $t\in[0,T]$,
\begin{equation*}
\partial_{p_i}H(t,x,p)
=
\lim_{n\to+\infty}
n\left[
H\left(t,x,p+\frac{1}{n}e_i\right)-H(t,x,p)
\right]
\end{equation*}
and $t\mapsto H(t,x,q)$ is measurable for every $(x,q)$, we deduce that $t\mapsto\partial_{p_i}H(t,x,p)$ has a measurable representative.

 Finally, let us show~\eqref{eq:Lips_gradient}.  For a.e. $t\in [0,T]$ and for every  $(x,p,a)\in \RR^d\times \RR^d \times A$,  let us define $\mathcal{H}(t,x,p,a)\coloneqq a\cdot p-\ell(t,x,a)$. Let us fix $t \in [0,T]$ such that $H(t,\cdot,\cdot)$ is differentiable with respect to $p$ and  $x_1,\, x_2,\, p_1\, , p_2\in  \RR^d$.  Let us set $a^*_1=D_pH(t,x_1,p_1)$ and $a^*_2=D_pH(t,x_2,p_2)$. If $a^*_1= a^*_2$ the assertion is true, and hence we suppose that $a^*_1\neq a^*_2$.
Since  $a^*_1$ and $a^*_2$ are the maximizers over the convex set $A$ of $\mathcal{H}(t,x_1,p_1,\cdot )$ and of $\mathcal{H}(t,x_2,p_2,\cdot)$, respectively,  we have that 
\begin{equation*}
    \begin{aligned}
      &   D_a\mathcal{H}(t,x_1,p_1,a^*_1)\cdot ( a-a^*_1) \leq 0 \qquad \text{for all }a \in A, \\ 
        & D_a\mathcal{H}(t,x_2,p_2,a^*_2) \cdot ( a-a^*_2 ) \leq 0 \qquad \text{for all }a \in A.
    \end{aligned}
\end{equation*}
By summing the first inequality evaluated at $a=a_2^*$ and the second evaluated at $a=a_1^*$, we obtain
\begin{equation*}
      \big ( D_a\mathcal{H}(t,x_1,p_1,a^*_1)-D_a\mathcal{H}(t,x_2,p_2,a^*_2)\big ) \cdot ( a^*_1-a^*_2 )\geq 0,
\end{equation*}
which, adding and subtracting $D_a\mathcal{H}(t,x_1,p_1,a_2^*)$,  yields
\begin{equation}\label{eq: est mtcH}
\begin{aligned}
       &   \big( D_a\mathcal{H}(t,x_1,p_1,a^*_1)-D_a\mathcal{H}(t,x_1,p_1,a^*_2)\big) \cdot ( a^*_1-a^*_2) 
      \\ &    \geq \big( D_a\mathcal{H}(t,x_2,p_2,a^*_2)- D_a\mathcal{H}(t,x_1,p_1,a^*_2)\big)\cdot ( a^*_1-a^*_2).
\end{aligned}
\end{equation}
Estimating the left-hand side of \eqref{eq: est mtcH} from above thanks \eqref{eq: convex L}, we get 
\begin{equation}\label{eq: above mtcH}
    \begin{aligned}
      &   \big ( D_a\mathcal{H}(t,x_1,p_1,a^*_1)-D_a\mathcal{H}(t,x_1,p_1,a^*_2) \big )\cdot ( a^*_1-a^*_2) \\ &=  -  \big (  D_a\ell(t,x_1,a^*_1) -D_a\ell(t,x_1,a^*_2) \big ) \cdot (a^*_1-a^*_2 ) \leq     - \ov C_{\ell,3} |a^*_1-a^*_2|^2,
    \end{aligned}
\end{equation}
while estimating  the right-hand side of \eqref{eq: est mtcH}  from below thanks \eqref{eq: DaL lip}, we obtain  
\begin{equation}\label{eq: below mtcH}
\begin{aligned}
      & \big ( D_a\mathcal{H}(t,x_2,p_2,a^*_2)- D_a\mathcal{H}(t,x_1,p_1,a^*_2)\big ) \cdot ( a^*_1-a^*_2 ) \\
      &= \big (  p_2-D_a\ell(t,x_2,a^*_2)- p_1+D_a\ell(t,x_1,a^*_2)\big ) \cdot( a^*_1-a^*_2)\\ 
      &=  ( p_2-p_1  )\cdot( a^*_1-a^*_2) + \big( D_a\ell(t,x_1,a^*_2)- D_a\ell(t,x_2,a^*_2)\big)\cdot ( a^*_1-a^*_2 )\\ 
      &\geq  -|p_2-p_1|\, |a^*_1-a^*_2|- | D_a\ell(t,x_1,a^*_2)- D_a\ell(t,x_2,a^*_2)|\, | a^*_1-a^*_2 |\\  
       &\geq  -|p_2-p_1|\, |a^*_1-a^*_2|- \ov C_{\ell,5}|x_1-x_2|\, | a^*_1-a^*_2 |.\\ 
    \end{aligned}
\end{equation}
Combining  \eqref{eq: est mtcH} \eqref{eq: above mtcH} \eqref{eq: below mtcH}, we have   
\begin{equation*}
    \ov C_{\ell,3} |a^*_1-a^*_2|^2\leq  |p_2-p_1|\, |a^*_1-a^*_2|+ \ov C_{\ell,5}\, |x_1-x_2| \, |a^*_1-a^*_2 |,
\end{equation*}
which  implies
\begin{equation*}
    |a^*_1-a^*_2|\leq \frac{1}{\ov C_{\ell,3}}\left(|p_2-p_1|+ \ov C_{\ell,5}|x_1-x_2|\right).
\end{equation*}
\end{proof}
\begin{rem}
Notice that for a.e. $t\in [0,T]$ and for every  $x\in\RR^{d}$ the function $p\mapsto H(t,x,p)$ is convex, but,
in general, $H$ could be  not strict convex in the third variable. For example, we consider  $d=1$, the strict convex function $\ell(t,x,a)\coloneqq (a-\frac{1}{2})^2$ and $A=[0,1]$. In this case 
    \[ 
H(t,x,p)=
\begin{cases}
 -\frac{1}{4}  & \text{if } p\in (-\infty,-1), \\ 
   \frac{p^2}{4}+ \frac{p}{2}  & \text{if } p\in [-1,1],\\ 
 p-\frac{1}{4} & \text{if } p\in (1,+\infty), 
\end{cases}
\]
which is not strict convex.
\end{rem}

\section{Well-posedness  of \eqref{eq:MFG}}
Given  $\mu \in \CC([0,T];\P_1(\RR^d))$, we consider the following HJB equation associated with \eqref{eq:MFG}: 
\begin{equation}\label{eq:HJB}\tag{HJB}
\begin{aligned}
  -\partial_t v(t,x)+H(t,x,D_xv(t,x))&= f(t,x,\mu(t)) && \text{ in } (0,T)\times \RR^d, \\
  v(T,x)&= g(x,\mu(T))  &&\text{ in } \RR^d.\\
\end{aligned}
\end{equation}
We define 
\begin{equation*}
    \begin{aligned}
        \mathcal{USC}([0,T]\times\RR^d)&\coloneqq\{v\colon[0,T]\times \RR^d\to \RR \, |\, v \text{ is upper semi-continuous}\},\\ 
        \mathcal{LSC}([0,T]\times\RR^d)&\coloneqq\{v\colon[0,T]\times \RR^d\to \RR \, |\, v \text{ is lower semi-continuous}\}.
    \end{aligned}
\end{equation*}

\begin{defin}\label{def: L1viscosity}
  A function $v\in \mathcal{USC}([0,T]\times\RR^d)$ (resp. $v\in \mathcal {LSC}([0,T]\times\RR^d)$ ) is an \textit{$L^1$-viscosity subsolution (resp. supersolution)}  to \eqref{eq:HJB} if 
  \begin{equation}\label{eq: terminal L1visc}
      v(T,x)\leq g(x,\mu(T)) \quad ( \text{resp}. \geq) \qquad \text{ for all } x \in \RR^d
  \end{equation}
  and
  for every $(t^*,x^*)\in (0,T)\times \RR^d$, $\phi\in \CC^\infty_c((0,T)\times \RR^d)$, $b\in L^1([0,T])$, and $G\in \CC([0,T]\times \RR^d\times \RR^d)$ such that 
 \begin{enumerate}[label={\rm(\roman*)},     ref={\rm(\roman*)}]
    \item  $(t,x) \mapsto v(t,x)-\phi(t,x)-\displaystyle\int_t^Tb(s)\dd s$ has a local maximum (resp. minimum) at $(t^*,x^*)$,
    \item  $G(t,x,p)\leq H(t,x,p)-f(t,x,\mu(t))+b(t)$  (resp. $\geq$) for a.e. $t$ in a neighbourhood of $t^*$ and for every  $(x,p)$ in a neighbourhood of $(x^*,D_x \phi(t^*,x^*))$,
\end{enumerate}
we have  that 
\begin{equation}\label{eq: L1visc}
    -\partial_t\phi(t^*,x^*)+G(t^*,x^*,D_x \phi(t^*,x^*))\leq 0 \qquad(\text{resp. }\geq).
\end{equation}
A function  $v\in \CC([0,T]\times\RR^d$) is an \textit{$L^1$-viscosity solution} if it is both $L^1$-viscosity subsolution and  $L^1-$ viscosity supersolution.
\end{defin}
\begin{rem}\label{rem: funzioni test L1viscosity} 
In \cite{Ishii85}, the test functions $\phi$ appearing in \cref{def: L1viscosity} are assumed to belong to $  \CC^1((0,T)\times \RR^d)$. However, by standard arguments, it suffices to consider test functions in $ \CC^\infty_{c}((0,T)\times \RR^d)$, yielding an equivalent definition.
\end{rem} 

We now recall the comparison principle, which requires to define  following space of  functions
\begin{equation*}
    \mathcal{UC}_x([0,T]\times \RR^d)\coloneq \left\{v\colon [0,T]\times \RR^d\to \RR \, \middle| \, \lim_{r \to 0}\sup_{\substack{t \in [0,T]\\  |x-y|\leq r}}|v(t,x)-v(t,y)|=0 \right\}. 
\end{equation*}
\begin{theo}\label{thm: comparison HJB}
       Assume \onlyref{hyp:terminal cost,hyp: lagrangian new,hyp: compact set}, and let $\mu \in \CC([0,T];\P_1(\RR^d))$. 
Let \begin{equation}\label{eq: sub super sol comparison}
    \begin{aligned}
        \underline{v}&\in \mathcal{USC}([0,T]\times \RR^d)\cap \mathcal{UC}_x([0,T]\times \RR^d)\cap \BB([0,T]\times \RR^d),\\ 
        \ov v&\in \mathcal{LSC}([0,T]\times \RR^d)\cap \mathcal{UC}_x([0,T]\times \RR^d)\cap \BB([0,T]\times \RR^d),
    \end{aligned}
\end{equation}
be, respectively, a viscosity $L^1-$subsolution and a $L^1-$viscosity supersolution to \eqref{eq:HJB}. Then 
\begin{equation*}
   \sup_{(t,x)\in [0,T]\times \RR^d}(\underline{v}(t,x)-\ov v(t,x))\leq \sup_{ x\in \RR^d}(\underline{v}(T,x)-\ov v(T,x))^+.
\end{equation*}
\end{theo}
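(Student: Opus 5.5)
We adapt the classical doubling-of-variables argument to the $L^1$-viscosity framework of Ishii. The time-measurable part of the equation is absorbed by a time-integral correction, in the spirit of \cite{Ishii85}.

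\emph{Setup.} Set $F(t,x,p)\coloneqq H(t,x,p)-f(t,x,\mu(t))$. By \cref{rem: properties H} and \ref{hyp:terminal cost}, $F$ is measurable in $t$, Lipschitz in $x$ with constant $L\coloneqq \ov C_{\ell,2}+C_{f,2}$, and Lipschitz in $p$ with constant $C_{\rm b}$, uniformly for a.e. $t$. Let $\sigma\coloneqq\sup_x(\underline v(T,x)-\ov v(T,x))^+$ and argue by contradiction: assume $\sup(\underline v-\ov v)>\sigma+2\delta$ for some $\delta>0$.

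\emph{Penalization.} For $\eta,\eps,\alpha>0$ consider
\[
\Phi(t,s,x,y)=\underline v(t,x)-\ov v(s,y)-\frac{|x-y|^2}{2\eps}-\frac{|t-s|^2}{2\eps}-\alpha\bigl(\langle x\rangle+\langle y\rangle\bigr)-\frac{\eta}{t}-\frac{\eta}{s},
\]
where $\langle x\rangle=\sqrt{1+|x|^2}$. Boundedness of $\underline v,\ov v$ gives a maximizer $(\hat t,\hat s,\hat x,\hat y)$. The $\mathcal{UC}_x$ property together with the bound $\Phi(\hat t,\hat s,\hat x,\hat y)\ge\Phi(\hat t,\hat s,\hat x,\hat x)$ yields $|\hat x-\hat y|^2/\eps\to0$. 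Likewise $|\hat t-\hat s|\to0$ as $\eps\to0$, and then $\hat t,\hat s$ cluster at points where $\underline v-\ov v$ is large. We must exclude $\hat t$ or $\hat s$ close to $T$. For this, the usual trick is to first show, using the terminal inequalities \eqref{eq: terminal L1visc} and semicontinuity, that limit points at $t=T$ would give $\Phi\le\sigma+o(1)$, contradicting $\Phi\ge\sigma+2\delta-O(\alpha+\eta)$. This needs some care: $\underline v$ is only USC in $t$, but usc of $\underline v-\ov v$ at $(T,x)$ is enough. Hence $\hat t,\hat s\in(0,T)$ for small parameters.

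\emph{Applying the $L^1$ definition.} This is the main obstacle, since we cannot directly insert $\partial_t$ of the penalty together with $F$ evaluated pointwise in $t$. We therefore argue as in Ishii's proof (\cite[Theorem 8.1]{Ishii85}). Fix the spatial point and freeze the coupling by introducing $b(t)\coloneqq F(t,\hat x,\hat p)-G(t)$, where $\hat p=(\hat x-\hat y)/\eps$ and $G$ is continuous with $\|F(\cdot,\hat x,\hat p)-G\|_{L^1}$ small (Lusin/density). The Lipschitz bounds of $F$ in $(x,p)$ are uniform in $t$. Hence the continuous function $\tilde G(t,x,p)\coloneqq G(t)-L|x-\hat x|-C_{\rm b}|p-\hat p|-\kappa$ satisfies $\tilde G\le F+b$ near $(\hat t,\hat x,\hat p)$ for a.e. $t$, and symmetrically for the supersolution with $+$ signs. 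Rather than doubling in time with a single $b$, we use one variable $t$ for both functions: first double only in space, with the time term $\int_t^T b$ subtracted from $\underline v$ and added to $\ov v$ so that they cancel in the difference. We then localize with a smooth test function in $(t,x)$, replacing the quadratic penalty by a $\CC^\infty_c$ cutoff as allowed by \cref{rem: funzioni test L1viscosity}. The sub- and super-inequalities \eqref{eq: L1visc} then give
\[
\frac{\eta}{\hat t^2}\cdot 2\le \tilde G^{+}(\hat t,\hat y,\hat p_y)-\tilde G^{-}(\hat t,\hat x,\hat p_x)\le 2\kappa+L|\hat x-\hat y|+C\alpha,
\]
where the $\alpha$-terms come from the gradients of $\langle\cdot\rangle$, bounded by $\alpha$.

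\emph{Conclusion.} Letting $\kappa\to0$ (the $L^1$ approximation error enters only through $b$, which cancels), then $\eps\to0$ (so $|\hat x-\hat y|\to0$), then $\alpha\to0$, we reach $2\eta/T^2\le0$, a contradiction. The delicate step is justifying that a common time variable with the cancelling $\int_t^T b$ is admissible for both functions simultaneously. I would try to handle this exactly as Ishii does, by proving a lemma that $w(t,x,y)=\underline v(t,x)-\ov v(t,y)$ is an $L^1$-subsolution of the doubled equation $-\partial_t w+F(t,x,D_x w)-F(t,y,-D_y w)\le0$.
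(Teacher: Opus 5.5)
Your skeleton is the one the paper relies on. The paper gives no argument of its own: it invokes Ishii's Theorem~8.1 and notes that the proof carries over to bounded, semicontinuous, $\mathcal{UC}_x$ sub/supersolutions. Several parts of your sketch are fine: the penalization, the estimate $|\hat x-\hat y|^2/\eps\to0$, and the exclusion of $\hat t=T$ (with $\alpha>0$ fixed, maximizers stay in a compact set and semicontinuity at a limit point $(T,x_0)$ applies). So is the idea of absorbing the measurable part of $F$ at a frozen point into $b$, using the $t$-uniform Lipschitz bounds. The gap is the step where \cref{def: L1viscosity} is actually applied; the mechanism you describe there does not work.

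If you double only in space, a maximum of $\underline v(t,x)-\ov v(t,y)-\dots$ does not let you apply the definition to $\underline v$ alone. After freezing $y=\hat y$, what you subtract from $\underline v$ contains $\ov v(\cdot,\hat y)$, which is merely lower semicontinuous in $t$ and is not of the form ``smooth test function $+\int_t^Tb$''. Putting $\int_t^Tb$ on one function and $-\int_t^Tb$ on the other makes the terms cancel in the difference, which is exactly why it cannot help you separate them. If instead you double in time, a common $b$ leaves $\int_t^s b$, which is not differentiable in $t$ or $s$. A $b$ frozen at $(\hat x,\hat p)$ is also circular, since $(\hat x,\hat p)$ comes from maximizing a functional that contains $b$. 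A minor point: with the paper's convention, (ii) reads $G\le F+b$, so the frozen term must be $b=G-F(\cdot,\hat x,\hat p)$, not $F(\cdot,\hat x,\hat p)-G$. With the right sign any continuous $G$ works (e.g.\ $G\equiv0$), so no Lusin approximation or $\kappa$ is needed.

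The lemma you name in your last sentence is therefore not a technicality but the whole content of the proof, and it needs an argument. One that works is as follows.
\begin{itemize}
\item Let $\varphi,b,G$ be admissible for $w(t,x,y)=\underline v(t,x)-\ov v(t,y)$ and the operator $-w_t+F(t,x,D_xw)-F(t,y,-D_yw)$ at a contact point $(\hat t,\hat x,\hat y)$. This point is now fixed before any further doubling. Set $\hat p=D_x\varphi$ and $\hat q=D_y\varphi$ there.
\item Split $b=b_1+b_2$ with $b_1=-F(\cdot,\hat x,\hat p)\in L^\infty$ and $b_2=b+F(\cdot,\hat x,\hat p)\in L^1$.
\item Set $G_1(t,x,p)=-L|x-\hat x|-C_{\rm b}|p-\hat p|$ and $G_2(s,y,q)=L|y-\hat y|+C_{\rm b}|q+\hat q|-G(s,\hat x,\hat y,\hat p,\hat q)$. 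These are continuous. The Lipschitz bounds, together with the inequality satisfied by $G$ at $(\hat x,\hat y,\hat p,\hat q)$, give $G_1\le F+b_1$ and $G_2\ge F-b_2$ for a.e.\ time.
\item Maximize $\underline v(t,x)-\ov v(s,y)-\varphi(t,x,y)-\frac{(t-s)^2}{2\delta}-\int_t^Tb_1-\int_s^Tb_2$ plus a strict localization term. Now $\underline v$ sees a smooth test function with $b_1$, and $\ov v$ sees one with $-b_2$.
\item The integral terms reduce to $\int_t^Tb$ on the diagonal, so maximizers converge to $(\hat t,\hat t,\hat x,\hat y)$. The $(t-s)/\delta$ terms cancel upon subtraction, and letting $\delta\to0$ yields $-\partial_t\varphi+G\le0$ at the contact point.
\end{itemize}
With the lemma in hand, the comparison needs no $b$ at all. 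Apply it with $\varphi=\frac{|x-y|^2}{2\eps}+\alpha(\langle x\rangle+\langle y\rangle)+\frac{\eta}{t}$, $b=0$ and $G(t,x,y,p,q)=-L|x-y|-C_{\rm b}|p+q|$. This gives $\eta/\hat t^{\,2}\le L|\hat x-\hat y|+2C_{\rm b}\alpha$, which is the contradiction you are after.
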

\begin{proof}
    This result follows from the comparison principle in \cite[Theorem 8.1]{Ishii85}, whose assumptions are satisfied by \onlyref{hyp:terminal cost,hyp: lagrangian new,hyp: compact set} and \cref{rem: properties H}. We note that, although \cite[Theorem 8.1]{Ishii85} is stated for $\underline{v},\ov v \in \CC([0,T]\times\RR^d)\cap\mathcal{UC}_x([0,T]\times\RR^d)$, its proof remains valid under the  conditions \eqref{eq: sub super sol comparison}.
\end{proof}

 We define  $\A\coloneqq\{\alpha\colon[0,T]\to A \, | \, \alpha \text{ measurable}\}$ and, for every  $(t,x)\in [0,T]\times \RR^d$,
 \begin{equation*} 
    J^{t,x}[\mu](\alpha)\coloneqq \int_{t}^T\Big (\ell(s,X^{t,x,\alpha}(s),\alpha(s)) +f(s,X^{t,x,\alpha}(s),\mu(s)) \Big)\dd s     + g(X^{t,x,\alpha}(T),\mu(T)),
\end{equation*}
where 
$$
X^{t,x,\alpha}(s)\coloneqq x-\displaystyle\int_{t}^s\alpha(\tau )\dd \tau\quad\text{for all }s\in [t,T].
$$
We also define the value function $v[\mu]\colon[0,T] \times \RR^d \to \RR$ as 
\begin{equation*}
    v[\mu](t,x)\coloneqq \inf_{\alpha \in \A}J^{t,x}[\mu](\alpha).
\end{equation*}
The value function is well defined and the Dynamic Programming Principle (DPP) holds, that is   for every $ t_1,t_2\in [0,T]$ with $t_2\geq t_1$ and for every $x\in \RR^d $ 
           \begin{equation*}
           \begin{aligned}
                   v[\mu](t_1,x)=\inf_{\alpha \in \A}\left\{ \int_{t_1}^{t_2} \Big(\ell\big(s,X^{t_1,x,\alpha}(s),\alpha(s)\big)  +f(s,X^{t_1,x,\alpha}(s),\mu(s))\Big)\dd s+v[\mu](t_2,X^{t_1,x,\alpha}(t_2))\right\}.
                              \end{aligned}
\end{equation*}

\begin{prop}\label{prop: properties HJB}
    Assume \onlyref{hyp:terminal cost,hyp: lagrangian new,hyp: compact set}, and let $\mu \in \CC([0,T];\P_1(\RR^d))$. Then the following hold:
 \begin{enumerate}[label={\rm(\roman*)},     ref={\rm(\roman*)}]
\item \label{item: optimal control v}{\rm(Existence of an optimal control)}  For every  $(t,x)\in [0,T)\times \RR^d$, there exists $\alpha^{t,x}\in\A$ such that $v[\mu](t,x)=J^{t,x}[\mu](\alpha^{t,x})$.
    \item \label{item: bound v}  {\rm(Uniform bound)} There exists    $C_{{\rm v}}$  independent of $\mu$ such that
    \begin{equation*}
        |v[\mu](t,x)|\leq C_{{\rm v}} \quad \text{for all }(t,x)\in [0,T]\times \RR^d.
    \end{equation*}
    \item \label{item: Lip v}{\rm (Lipschitz property)} There exists $C_{{\rm Lip}}>0$  independent of $\mu$  such that
    \begin{equation*}
        |v[\mu](t_1,x_1)-v[\mu](t_2,x_2)|\leq C_{{\rm Lip}}(|t_1-t_2|+|x_1-x_2|)  \quad   \text{for all } t_1\, t_2\in [0,T], \, x_1,x_2 \in \RR^d.
    \end{equation*}
    \item \label{item: semiconc v} {\rm (Semiconcavity property)} 
   There exists  $C_{{\rm sc}}>0$  independent of $\mu$ such that 
   \begin{equation*}
      v[\mu](t,x_1+x_2)-2v[\mu](t,x_1)+v[\mu](t,x_1-x_2)\leq C_{{\rm sc}}|x_2|^2   \quad \text{for all } t\in [0,T], \, x_1,\, x_2 \in \RR^d.
   \end{equation*}
\end{enumerate}
\end{prop}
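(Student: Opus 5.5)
We prove the four items directly from the control representation of $v[\mu]$ and the DPP. The data enter only through bounds that are uniform in $t$, so measurability in time causes no difficulty.

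\textbf{Existence of an optimal control.} Fix $(t,x)$ and take a minimizing sequence $\alpha_n\in\A$. Since $A$ is convex and compact, we may extract a subsequence with $\alpha_n\rightharpoonup\alpha$ weakly-$*$ in $L^\infty(t,T;\RR^d)$, and $\alpha(s)\in A$ for a.e. $s$. The trajectories $X_n:=X^{t,x,\alpha_n}$ are equi-Lipschitz, with constant $C_{\rm b}$, and converge uniformly to $X^{t,x,\alpha}$. The $f$ and $g$ terms then converge by \eqref{eq: lip f} and \eqref{eq: lip g}. For the running cost we write
\[
\ell(s,X_n,\alpha_n)=\big[\ell(s,X_n,\alpha_n)-\ell(s,X,\alpha_n)\big]+\ell(s,X,\alpha_n).
\]
The bracket is bounded by $\ov C_{\ell,2}\|X_n-X\|_\infty$ by \eqref{eq: lip L}. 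Convexity of $\ell(s,X(s),\cdot)$ and the lower semicontinuity of convex integral functionals under weak convergence give $\liminf\int\ell(s,X,\alpha_n)\geq\int\ell(s,X,\alpha)$. Hence $\alpha$ is optimal.

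\textbf{Uniform bound.} Item \ref{item: bound v} is immediate: \eqref{eq: bounded L}, \eqref{eq: bound f} and \eqref{eq: bound g} give $|J^{t,x}[\mu](\alpha)|\le T(\ov C_{\ell,1}+C_{f,1})+C_{g,1}=:C_{\rm v}$ for every $\alpha$.

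\textbf{Lipschitz property.} For the spatial estimate, use the same control from $x_1$ and from $x_2$. The trajectories then differ by the constant $x_1-x_2$, so \eqref{eq: lip L}, \eqref{eq: lip f} and \eqref{eq: lip g} yield
\[
J^{t,x_1}[\mu](\alpha)-J^{t,x_2}[\mu](\alpha)\le\big(T(\ov C_{\ell,2}+C_{f,2})+C_{g,2}\big)|x_1-x_2|.
\]
Taking the infimum and exchanging the roles of $x_1,x_2$ gives the spatial Lipschitz bound. For the time estimate, take $t_1<t_2$ and apply the DPP on $[t_1,t_2]$. Every trajectory satisfies $|X^{t_1,x,\alpha}(t_2)-x|\le C_{\rm b}(t_2-t_1)$, and the running integrand is bounded by $\ov C_{\ell,1}+C_{f,1}$. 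Combining these with the spatial Lipschitz bound gives
\[
|v[\mu](t_1,x)-v[\mu](t_2,x)|\le\big(\ov C_{\ell,1}+C_{f,1}+C_{\rm b}\,C_x\big)|t_1-t_2|,
\]
where $C_x$ is the spatial constant. This bound also holds when $t_2=T$. All constants are independent of $\mu$.

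\textbf{Semiconcavity.} This is the only step requiring care, because the semiconcavity hypotheses are one-sided. Let $\alpha$ be the optimal control for $(t,x_1)$ given by item \ref{item: optimal control v}. Use the same $\alpha$ from the points $x_1\pm x_2$; the corresponding trajectories are exactly $X\pm x_2$ with $X:=X^{t,x_1,\alpha}$. Then
\[
v(t,x_1+x_2)-2v(t,x_1)+v(t,x_1-x_2)\le J^{t,x_1+x_2}[\mu](\alpha)-2J^{t,x_1}[\mu](\alpha)+J^{t,x_1-x_2}[\mu](\alpha),
\]
and the right-hand side is a sum of second differences of $\ell(s,\cdot,\alpha(s))$, $f(s,\cdot,\mu(s))$ and $g(\cdot,\mu(T))$ along $X$ with increment $x_2$. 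The second difference of $\ell$ is bounded by $\ov C_{\ell,4}|x_2|^2$ via Taylor's formula in integral form and \eqref{eq: semiconc L}; this uses that $\ell(s,\cdot,a)$ is $C^2$. The other two are controlled by \eqref{eq: sconc f} and \eqref{eq: sconc g}. This gives $C_{\rm sc}=T(\ov C_{\ell,4}+C_{f,3})+C_{g,3}$. At $t=T$ the claim is exactly \eqref{eq: sconc g}.

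The main obstacle is the existence of an optimal control, specifically the lower semicontinuity of the running cost when $\ell$ is only measurable in $t$. This is resolved by convexity of $\ell$ in $a$ together with a standard lower semicontinuity theorem for normal integrands. It could alternatively be bypassed by proving semiconcavity with $\varepsilon$-optimal controls and letting $\varepsilon\to0$.
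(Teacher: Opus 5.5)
Your proposal is correct and follows essentially the paper's route. The paper proves (iii) exactly as you do: the same control from $x_1$ and $x_2$ gives the spatial estimate, and the DPP combined with that spatial bound gives the time estimate. For (i), (ii), (iv) the paper only refers to the standard arguments of \cite{CarliniSilvaZorkot}, and your weak-$*$ compactness with convex lower semicontinuity, and your shifted-trajectory semiconcavity argument, supply those details. As a minor point, your spatial constant $T(\ov C_{\ell,2}+C_{f,2})+C_{g,2}$ is the right one, whereas the constant in the paper's \eqref{eq: Lip in space v} drops the factor $T$.
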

\begin{proof}
     Assertions \onlyref{item: optimal control v,item: bound v,item: semiconc v} follow easily from   \onlyref{hyp: lagrangian new,hyp:terminal cost,hyp: compact set} arguing as in \cite[Proposition 2.1]{CarliniSilvaZorkot}. Moreover, for every $x_1,x_2 \in \RR^d$, \eqref{eq: lip g}, \eqref{eq: lip f}   and \eqref{eq: lip L} imply that
     \begin{equation}\label{eq: Lip in space v}
         |v[\mu](t,x_1)-v[\mu](t,x_2)|\leq (\ov C_{\ell,2}+ C_{f,2}+C_{g,2})|x_1-x_2| \qquad \text{for all }t \in [0,T].
     \end{equation}
By  the DPP, \ref{hyp: compact set}, \eqref{eq: bound f} \eqref{eq: bounded L},  and \eqref{eq: Lip in space v}, we have for every $t_1,t_2 \in [0,T]$
\begin{equation}\label{eq: Lip in time v}
       |v[\mu](t_1,x)-v[\mu](t_2,x)|
  \leq \big (\ov C_{\ell,1}+C_{f,1}+ (\ov C_{\ell,2}+ C_{f,2}+C_{g,2})C_{\rm b} \big )|t_1-t_2|\qquad \text{for all }x \in \RR^d.
\end{equation}
Combining \eqref{eq: Lip in space v} and \eqref{eq: Lip in time v} with the triangle inequality yields assertion \ref{item: Lip v}.
\end{proof}

 \begin{prop}\label{prop: existence HJB}
       Assume \onlyref{hyp:terminal cost,hyp: lagrangian new,hyp: compact set}, and let $\mu \in \CC([0,T];\P_1(\RR^d))$. The value function $v[\mu]$ is a $L^1-$viscosity solution to \eqref{eq:HJB}. 
\end{prop}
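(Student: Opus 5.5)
We will verify \cref{def: L1viscosity} directly from the DPP, using the regularity of \cref{prop: properties HJB}. Continuity of $v[\mu]$ follows from \cref{prop: properties HJB}\ref{item: Lip v}. The terminal condition \eqref{eq: terminal L1visc} holds with equality by definition, since $v[\mu](T,x)=g(x,\mu(T))$. It therefore remains to check the sub- and supersolution inequalities at interior points. Set $F(t,x,p)\coloneqq H(t,x,p)-f(t,x,\mu(t))$. Throughout we use the Lebesgue differentiation theorem applied to $L^1$ functions of time and the measurability from \cref{rem: properties H}\ref{item: convex H}.

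\textbf{Subsolution.} Let $(t^*,x^*)$, $\phi$, $b$, $G$ be as in \cref{def: L1viscosity}, with a local max of $w(t,x)\coloneqq v[\mu](t,x)-\phi(t,x)-\int_t^T b$. Write $p^*=D_x\phi(t^*,x^*)$. Assume by contradiction that $-\partial_t\phi(t^*,x^*)+G(t^*,x^*,p^*)\geq 2\theta>0$. By continuity of $G$ and of $D\phi$, there is $r>0$ such that for $|t-t^*|,|x-x^*|<r$ we have $-\partial_t\phi(t,x)+G(t,x,D_x\phi(t,x))\geq\theta$. Fix any $a\in A$ such that $a\cdot p^*-\ell$ is nearly optimal. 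More conveniently, we use the following inequality for a.e.\ $s$ and every $a\in A$: $-a\cdot p-\ell(s,x,a)-f(s,x,\mu(s))\geq -F(s,x,-p)$.

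We must be careful with the dynamics $\dot X=-\alpha$. The DPP with the constant control $\alpha\equiv a$ on $[t^*,t^*+h]$ gives
\[
v(t^*,x^*)\leq\int_{t^*}^{t^*+h}\bigl(\ell+f\bigr)(s,X(s),a)\,\dd s+v(t^*+h,X(t^*+h)).
\]
Combining this with the local maximum of $w$ yields
\[
\phi(t^*,x^*)-\phi(t^*+h,X(t^*+h))+\int_{t^*}^{t^*+h}b\leq\int_{t^*}^{t^*+h}(\ell+f).
\]
By the chain rule, $\phi(t^*+h,X(t^*+h))-\phi(t^*,x^*)=\int_{t^*}^{t^*+h}\bigl(\partial_t\phi-a\cdot D_x\phi\bigr)$. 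Hence
\[
\int_{t^*}^{t^*+h}\bigl[-\partial_t\phi+a\cdot D_x\phi-\ell-f+b\bigr](s,X(s))\,\dd s\leq0 .
\]
Now choose $a$ to be the maximizer $D_pH(t^*,x^*,p^*)$, or more robustly take the sup over a countable dense set of $a$. Continuity in $x$ of $\ell$ and of $D_x\phi$ then lets us replace the integrand by $-\partial_t\phi+H(s,x,D_x\phi)-f+b-o(1)$. The point is that $H(s,\cdot,\cdot)$ is only measurable in $s$, but it is uniformly continuous in $(x,p)$ uniformly in $s$, by \cref{rem: properties H}\ref{item: Lip space H},\ref{item: Lip grad H}. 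Also, the error $|a\cdot p-\ell(s,x,a)-H(s,x,p)|$ depends on $s$. This is handled by choosing, for each $s$, $a(s)=D_pH(s,x^*,p^*)$, which is measurable by \cref{rem: properties H}\ref{item: meas DpH}, and using the open-loop control $\alpha=a(\cdot)$ in the DPP instead of a constant. Then the integrand is at least $-\partial_t\phi+F(s,x^*,p^*)+b-Ch\geq-\partial_t\phi+G-Ch\geq\theta-Ch$ for a.e.\ $s$, using hypothesis (ii) on $G$. Dividing by $h$ gives a contradiction for small $h$.

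\textbf{Supersolution.} This is the main obstacle, since we must handle arbitrary controls. Assume a local min and $-\partial_t\phi+G\leq-2\theta$ near $(t^*,x^*)$. Take an optimal control $\alpha^{t^*,x^*}$ from \cref{prop: properties HJB}\ref{item: optimal control v}. The trajectory stays within $C_{\rm b}h$ of $x^*$. The DPP holds with equality along this control, and the minimum property yields
\[
\int_{t^*}^{t^*+h}\bigl[-\partial_t\phi+\alpha\cdot D_x\phi-\ell-f+b\bigr]\,\dd s\geq0 .
\]
For a.e.\ $s$ the integrand is bounded above by $-\partial_t\phi+H(s,X(s),D_x\phi)-f+b\leq-\partial_t\phi+G(s,X(s),D_x\phi)$, using (ii) with $\geq$ and the smallness of $|X(s)-x^*|$. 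By continuity of $G$ this is at most $-\theta$ for small $h$, which is a contradiction. The key technical point in both directions is that hypothesis (ii) is imposed only a.e.\ in $t$ but for every $(x,p)$ in a neighborhood, so it can be evaluated along trajectories after integrating in time. Only the continuous functions $\phi$ and $G$ are evaluated pointwise at $t^*$, which is exactly why the $L^1$ framework avoids any regularity in time.
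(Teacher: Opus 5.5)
Your argument is correct and is essentially the paper's proof. Both use the DPP along a control, the local extremum of $v[\mu]-\phi-\int_t^T b$, the chain rule for $\phi$ along $X$, and hypothesis (ii) applied a.e.\ inside the time integral before dividing by $h$. The differences are minor:
\begin{itemize}
\item You argue by contradiction, where the paper argues directly.
\item You use the explicit measurable maximizer $s\mapsto D_pH(s,x^*,D_x\phi(t^*,x^*))$ from \cref{rem: properties H}. The paper instead takes the supremum over $\A$ via the Rockafellar--Wets interchange theorem, which would also work if the maximizer were not unique.
\item You write out the supersolution case with an optimal control, which the paper leaves as standard.
\end{itemize}
Delete the exploratory line asserting $-a\cdot p-\ell(s,x,a)-f(s,x,\mu(s))\geq -F(s,x,-p)$. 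It is false as written (the correct bound is $\leq F(s,x,-p)$), though your final argument does not use it.
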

\begin{proof} 
We  notice that, by definition, $v[\mu](T,x)=g(x,\mu(T))$ for every $x\in \RR^d$. 
We first show that $v$ is a $L^1-$viscosity subsolution. Let $(t^*,x^*)\in (0,T)\times \RR^d$, $\phi \in \CC^{\infty}_c((0,T)\times \RR^d)$, $b \in L^1([0,T])$, and  $G\in \CC([0,T]\times \RR^d\times \RR^d)$  as in \cref{def: L1viscosity} for the subsolution property. 
Let $R^*>0$ be sufficiently small so that   $ [t^*-R^*,t^*+R^*]\subset(0,T)$ and $(t^*,x^*)$ is a maximum of $(t,x) \mapsto v[\mu](t,x)-\phi(t,x)- \displaystyle\int_t^Tb(s)\dd s$ over $[t^*-R^*,t^*+R^*]\times \ov B(x^*,R^*)$.  For every $(t,x)\in [t^*-R^*,t^*+R^*]\times \ov B(x^*,R^*)$, we have
\begin{equation}\label{eq: locmax_0}
    \begin{aligned}
        0&\leq v[\mu](t^*,x^*)-\phi(t^*,x^*)- \int_{t^*}^Tb(s)\dd s- v[\mu](t,x)+\phi(t,x)+ \int_t^{T}b(s)\dd s.\\
    \end{aligned}
\end{equation}
We fix $\alpha \in \A$. By the DPP, for every $\tau \in (t^*,T]$, we have 
\begin{equation}\label{eq: DPP sub}
    v[\mu](t^*,x^*)\leq \int_{t^*}^{\tau}\Big ( \ell\big(s,X^{t^*,x^*,\alpha}(s),\alpha(s)\big)+f(s,X^{t^*,x^*,\alpha}(s) ,\mu(s))\Big)\dd s+v[\mu](\tau,X^{t^*,x^*,\alpha}(\tau)).
\end{equation}
Since $ |X^{t^*,x^*,\alpha}(s)-x^*|\leq C_{\rm b}|s-t^*|$ for every $s \in [t^*,T]$, 
 we can choose $\tau \in (t^*,T]$ such that $|\tau-t^*|\leq R^*$ and $|X^{t^*,x^*,\alpha}(s)-x^*|\leq R^*$ for every $s\in [t^*,\tau]$. 
Combining \eqref{eq: locmax_0} and \eqref{eq: DPP sub}, we obtain 
\begin{equation}\label{eq: proof sub}
    \begin{aligned}
       &\phi(t^*,x^*)-\phi(\tau,X^{t^*,x^*,\alpha}(\tau))+ \int_{t^*}^{\tau}b(s)\dd s \\ 
        &\leq v[\mu](t^*,x^*)-v[\mu](\tau,X^{t^*,x^*,\alpha}(\tau)) \leq \int_{t^*}^{\tau}\Big ( \ell\big(s,X^{t^*,x^*,\alpha}(s),\alpha(s)\big)+f(s,X^{t^*,x^*,\alpha}(s) ,\mu(s))\Big)\dd s.\\ 
    \end{aligned}
\end{equation}
We observe that, by \eqref{eq: lip L} and~\eqref{eq: lip f},
\begin{equation*}
    \begin{aligned}
       & \left|\int_{t^*}^{\tau}\ell\big(s,X^{t^*,x^*,\alpha}(s),\alpha(s)\big)\dd s -  \int_{t^*}^{\tau}\ell\big(s,x^*,a\big)\dd s\right|\leq 
       \dfrac{\ov C_{\ell,2}C_{\rm b}}{2}(\tau-t^*)^2\\
       &\left|\int_{t^*}^{\tau}f\big (s,X^{t^*,x^*,\alpha}(s), \mu(s)\big )\dd s-  \int_{t^*}^{\tau}f\big (s,x^*, \mu(s)\big)\dd s\right|\leq  \frac{C_{f,2}C_{\rm b}}{2}(\tau-t^*)^2,
    \end{aligned}
\end{equation*}
and, since $\phi \in \CC^\infty_c([0,T]\times \RR^d)$,  
\begin{equation*}
    \begin{aligned}
        &\phi(t^*,x^*)-\phi(\tau,X^{t^*,x^*,\alpha}(\tau))\\ &=-\partial_t\phi(t^*,x^*)(\tau-t^*)-D_x\phi(t^*,x^*)\cdot \left(X^{t^*,x^*,\alpha}(\tau)-x^*\right)+\mathcal{O}((\tau-t^*)^2)\\ &=-\partial_t\phi(t^*,x^*)(\tau-t^*)+\int_{t^*}^{\tau}\alpha(s)\cdot D_x\phi(t^*,x^*)\dd s+\mathcal{O}((\tau-t^*)^2),
    \end{aligned}
\end{equation*}
where the $\mathcal{O}$ term is independent of $\alpha\in\A$.
Therefore, \eqref{eq: proof sub} yields 
\begin{multline}
 -\partial_t\phi(t^*,x^*)(\tau-t^*)+\int_{t^*}^{\tau}\Big(\alpha(s)\cdot D_x\phi(t^*,x^*)-\ell(s,x^*,\alpha(s))\Big)\dd s \\
 + \int_{t^*}^{\tau}b(s)\dd s - \int_{t^*}^{\tau}f(s,x^*, \mu(s))\dd s        \leq     \mathcal{O}((\tau-t^*)^2).
\label{eq:putting_phi_and_H}
\end{multline}
On the other hand, by  \onlyref{hyp: compact set}, \ref{hyp: lagrangian new}, and~\cite[Theorem 14.60]{RockafellarWets1998}, one has

\begin{align*}
\sup_{\alpha\in \A}\int_{t^*}^{\tau}\Big(\alpha(s)\cdot D_x\phi(t^*,x^*)-\ell(s,x^*,\alpha(s))\Big)\dd s&=\int_{t^*}^{\tau}\sup_{a\in A}\Big(a\cdot D_x\phi(t^*,x^*)-\ell(s,x^*,a)\Big)\dd s\\
&=\int_{t^*}^{\tau}H(s,x^*,D_x\phi(t^*,x^*))\dd s.
\end{align*}
It then follows from~\eqref{eq:putting_phi_and_H} that
\begin{equation*}
    \begin{aligned}
      &  -\partial_t\phi(t^*,x^*)(\tau-t^*)+\int_{t^*}^{\tau} H(s,x^*,D_x\phi(t^*,x^*))\dd s+ \int_{t^*}^{\tau}b(s)\dd s - \int_{t^*}^{\tau}f(s,x^*, \mu(s))\dd s\\ & 
        \leq     \mathcal{O}((\tau -t^*)^2),
    \end{aligned}
\end{equation*}
and then
\begin{equation*}
    \begin{aligned}
        -\partial_t\phi(t^*,x^*)(\tau-t^*)+\int_{t^*}^{\tau} G(s,x^*,D_x\phi(t^*,x^*))\dd s\leq  \mathcal{O}((\tau -t^*)^2).
    \end{aligned}
\end{equation*}
Finally, since $G$ is continuous, dividing by $(\tau -t^*)$, and letting $\tau \to t^*$, yields
    \begin{equation*}
    \begin{aligned}
        -\partial_t\phi(t^*,x^*)+ G(t^*,x^*,D_x\phi(t^*,x^*))\leq 0.
    \end{aligned}
\end{equation*}
Similarly, by standard arguments, $v[\mu]$ is also a $L^1-$viscosity supersolution to \eqref{eq:HJB}.

\end{proof}

\begin{prop}\label{prop: uniqueness HJB}
Assume \onlyref{hyp:terminal cost,hyp: lagrangian new,hyp: compact set},  and  let $\mu\in \CC([0,T];\P_1(\RR^d))$. The value function $v[\mu]$ is the unique $L^1$-viscosity solution to \eqref{eq:HJB} in $\CC_b([0,T]\times\RR^d)\cap\mathcal{UC}_x([0,T]\times\RR^d)$.
\end{prop}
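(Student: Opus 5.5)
The plan has two parts: first show that $v[\mu]$ lies in the uniqueness class, then deduce uniqueness from the comparison principle, \cref{thm: comparison HJB}.

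\textbf{Step 1: $v[\mu]$ is in the class.} By \cref{prop: existence HJB}, $v[\mu]$ is an $L^1$-viscosity solution to \eqref{eq:HJB}. By \cref{prop: properties HJB}\ref{item: bound v}, $v[\mu]$ is bounded. By \cref{prop: properties HJB}\ref{item: Lip v}, it is Lipschitz continuous in $(t,x)$. In particular it is continuous, so $v[\mu]\in \CC_b([0,T]\times\RR^d)$. Moreover, the Lipschitz constant in $x$ is uniform in $t$, which gives $\sup_{t,\,|x-y|\le r}|v[\mu](t,x)-v[\mu](t,y)|\le C_{\rm Lip}\, r\to 0$. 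Hence $v[\mu]\in\mathcal{UC}_x([0,T]\times\RR^d)$.

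\textbf{Step 2: uniqueness.} Let $w\in \CC_b([0,T]\times\RR^d)\cap\mathcal{UC}_x([0,T]\times\RR^d)$ be another $L^1$-viscosity solution. Being continuous, both $w$ and $v[\mu]$ belong simultaneously to $\mathcal{USC}$ and $\mathcal{LSC}$. They are also bounded and lie in $\mathcal{UC}_x$, so the regularity requirements \eqref{eq: sub super sol comparison} hold with either function in either role.

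\textbf{Step 3: apply comparison twice.} First take $\underline v=w$ and $\ov v=v[\mu]$. \Cref{thm: comparison HJB} gives
\[
\sup(w-v[\mu])\le \sup_x\bigl(w(T,x)-v[\mu](T,x)\bigr)^+ .
\]
By the terminal condition \eqref{eq: terminal L1visc}, used as both sub- and supersolution, $w(T,x)=g(x,\mu(T))=v[\mu](T,x)$, so the right-hand side is $0$. Exchanging the roles of $w$ and $v[\mu]$ gives the reverse inequality, and therefore $w=v[\mu]$.

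There is no real obstacle here; the only point to check is that the class in the statement is contained in the class required by the comparison theorem, which holds because continuity implies both semicontinuities.
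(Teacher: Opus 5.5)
Your proposal is correct and follows essentially the same route as the paper, whose proof simply invokes \cref{prop: properties HJB}, \cref{prop: existence HJB}, and \cref{thm: comparison HJB}. You have just written out the details the paper leaves implicit: that $v[\mu]$ lies in $\CC_b\cap\mathcal{UC}_x$, and that comparison, applied in both directions with matching terminal data, forces equality.
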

\begin{proof}
The result follows directly from \cref{prop: properties HJB}, \Cref{prop: existence HJB},  and \cref{thm: comparison HJB}.
\end{proof}
To establish uniqueness of solutions to \eqref{eq:MFG}, we introduce the following notion of generalized solution to \eqref{eq:HJB} and subsequently show that the value function $v[\mu]$ satisfies it.
\begin{defin} \label{def: generalized solution} A function $v\colon[0,T]\times \RR^d\to \RR$  is a \textit{generalized solution} to \eqref{eq:HJB} if it is differentiable a.e. on $(0,T)\times \RR^d$ and
\begin{equation*}
    -\partial_tv(t,x)+H(t,x,D_xv(t,x))=f(t,x,\mu(t)) \qquad \text{a.e. in } (0,T)\times \RR^d.
\end{equation*}
\end{defin}

\begin{prop}\label{prop:v is generalized solution}
  Assume \onlyref{hyp:terminal cost,hyp: lagrangian new,hyp: compact set},  and  let $\mu\in \CC([0,T];\P_1(\RR^d))$. The value function $v[\mu]$ is a generalized solution to   \eqref{eq:HJB}.
\end{prop}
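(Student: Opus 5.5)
By \cref{prop: properties HJB}\ref{item: Lip v}, $v[\mu]$ is Lipschitz on $[0,T]\times\RR^d$. Rademacher's theorem then gives differentiability at a.e. $(t,x)\in(0,T)\times\RR^d$. It therefore suffices to prove the equation at a.e. point. We will not try to pass directly from the $L^1$-viscosity property of \cref{prop: existence HJB} to pointwise information, because the time-measurable data prevent that. Instead we combine differentiability with the DPP, much as in the proof of \cref{prop: existence HJB}, and restrict to \emph{Lebesgue points in time} of the data.

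Fix countable dense sets $A_0\subset A$ and $Q\subset\RR^d$, and a countable dense set of points $x$. By Fubini and the Lebesgue differentiation theorem applied to $t\mapsto \ell(t,x,a)$, $t\mapsto f(t,x,\mu(t))$ and $t\mapsto H(t,x,p)$, there is a full-measure set of $(t,x)$ with the following property: $t$ is a Lebesgue point of these maps for all $a\in A_0$ and $p\in Q$, and $v[\mu]$ is differentiable at $(t,x)$. Each map is measurable in $t$; joint measurability is used only to apply Fubini in $(t,x)$. Some care is needed, since the null exceptional set in $t$ may depend on $x$. To handle this, use the uniform Lipschitz bounds in $x$ (\eqref{eq: lip L}, \eqref{eq: lip f}, \cref{rem: properties H}\ref{item: Lip space H}) and in $p$ (\cref{rem: properties H}\ref{item: Lip grad H}). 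With these, the averages $\frac1h\int_t^{t+h}F(s,x,\cdot)\dd s$ converge uniformly once they converge on dense sets. We therefore fix $t$ off a null set $N$ that works for all rational $x$, $a\in A_0$, $p\in Q$, and extend to all $x,a,p$ by equicontinuity. On this full-measure set we fix $(t,x)$ and write $p=D_xv[\mu](t,x)$ and $q=\partial_t v[\mu](t,x)$.

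For the \textbf{subsolution inequality}, take a constant control $\alpha\equiv a\in A$ in the DPP on $[t,t+h]$. Expanding $v[\mu](t+h,x-ha)=v[\mu](t,x)+h(q-a\cdot p)+o(h)$ and replacing $X$ by $x$ at cost $O(h^2)$, dividing by $h$ gives
\[
-q+a\cdot p-\tfrac1h\int_t^{t+h}\ell(s,x,a)\dd s-\tfrac1h\int_t^{t+h}f(s,x,\mu(s))\dd s\le o(1).
\]
Letting $h\to0^+$ at the Lebesgue point gives $-q+a\cdot p-\ell(t,x,a)\le f(t,x,\mu(t))$, first for $a\in A_0$ and then for all $a\in A$ by continuity. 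Taking the supremum yields $-q+H(t,x,p)\le f(t,x,\mu(t))$.

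For the \textbf{supersolution inequality}, use the optimal control $\alpha^{t,x}$ from \cref{prop: properties HJB}\ref{item: optimal control v}, for which the DPP holds with equality on $[t,t+h]$. Set $\bar a_h=\frac1h\int_t^{t+h}\alpha^{t,x}$, so that $X(t+h)=x-h\bar a_h$, and differentiability gives $v[\mu](t+h,X(t+h))=v[\mu](t,x)+h(q-\bar a_h\cdot p)+o(h)$ uniformly, since $|\bar a_h|\le C_{\rm b}$. This leads to
\[
-q+\tfrac1h\int_t^{t+h}\bigl(\alpha(s)\cdot p-\ell(s,x,\alpha(s))\bigr)\dd s\ge \tfrac1h\int_t^{t+h} f(s,x,\mu(s))\dd s - o(1).
\]
We bound the integrand by $H(s,x,p)$ and use that $t$ is a Lebesgue point of $H(\cdot,x,p)$ to conclude $-q+H(t,x,p)\ge f(t,x,\mu(t))$. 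For the Lebesgue-point step we need $p$ arbitrary, whereas the full-measure set was built from $p\in Q$; this is again covered by the uniform Lipschitz continuity of $H$ in $p$ noted above.

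The main obstacle is the measure-theoretic bookkeeping in the second paragraph. We need a single full-measure set of $(t,x)$ on which the time averages of $\ell$, $f$ and $H$ converge for all relevant $(x,a,p)$ simultaneously, and this is where the uniform Lipschitz estimates in $(x,a,p)$ are essential. The measurability of $(t,x)\mapsto f(t,x,\mu(t))$ should follow from \ref{hyp:terminal cost}, using continuity of $\mu$ and of $f(t,x,\cdot)$, via a Carathéodory-type argument.
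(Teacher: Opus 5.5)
Your argument is correct, and its skeleton is the paper's. You work on $((0,T)\setminus N)\times\RR^d$ intersected with the differentiability set, where $N$ is a single null set of times outside which the data have Lebesgue points for countably many arguments. These are extended to all $(x,p)$ by the uniform Lipschitz bounds, which is exactly how the paper proves $E\times\RR^d\subseteq\Omega_f$, and then a local DPP argument is run.

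The difference lies in the local step. The paper tests with a smooth $\phi$ touching $v[\mu]$ at a point of $\Omega$ and reuses the computation of \cref{prop: existence HJB}. That computation includes the Rockafellar--Wets interchange $\sup_\alpha\int=\int\sup_a$, so only Lebesgue points of $f$ and $H$ are needed. The paper then identifies $(\partial_t\phi,D_x\phi)$ with $(\partial_t v[\mu],D_xv[\mu])$. You instead expand $v[\mu]$ itself to first order along the trajectory. This is legitimate because the space-time displacement is at most $(1+C_{\rm b})h$ whatever the control.

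Your route buys two things:
\begin{itemize}
\item It removes any need for a smooth function touching $v[\mu]$ from above and from below at a point where $v[\mu]$ is merely differentiable. In general only a $C^1$ touching function is guaranteed there, and $v[\mu]$ is only Lipschitz in $t$.
\item It replaces the measurable interchange by constant controls. The price is the extra countable family of Lebesgue points of $\ell(\cdot,x,a)$, $a\in A_0$, and a supremum taken at the fixed time $t$.
\end{itemize}

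One phrasing point: the hypotheses give no bound on $D_a\ell$ uniform in $t$, so ``extend to all $a$ by equicontinuity'' is not available as stated. You never actually use it, though. In $a$ you only extend the limiting inequality at the fixed $t$, via continuity of $\ell(t,x,\cdot)$, and that step is correct.
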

\begin{proof}
     Let us define 
\begin{equation*}
    \Omega \coloneq\Omega_{v[\mu]}\cap\Omega_{f} \cap \Omega_H,
\end{equation*}
    where $\Omega_{v[\mu]}\coloneq\{(t,x)\in (0,T)\times \RR^d \,| \, v[\mu] \text{ is differentiable at } (t,x)\}$, $\Omega_{f}\coloneq \{(t,x)\in (0,T)\times \RR^d \,|\, t \text{ is a Lebesgue point for } t\mapsto f(t,x,\mu(t)) \}$, and   $\Omega_H\coloneqq\bigcap_{p\in \RR^d}\Omega_{H}(p)$ with $\Omega_{H}(p)\coloneq\{(t,x)\in (0,T)\times \RR^d \,| \, t \text{ is a Lebesgue point for } t\mapsto H(t,x,p) \}$.
It follows from \cref{prop: properties HJB}\ref{item: Lip v} that
$\Omega_{v[\mu]}$ has full measure. We now prove that
$\Omega_f$ has full measure.
For every $q\in\mathbb{Q}^d$, the map
$t\mapsto f(t,q,\mu(t))$ belongs to $L^\infty(0,T)$ by
\eqref{eq: bound f}. Hence, by the Lebesgue differentiation theorem and the countability of $\mathbb{Q}^d$, the set
\begin{equation*}
E\coloneq\bigcap_{q\in\mathbb{Q}^d}
\{t\in(0,T)\mid t\text{ is a Lebesgue point for }s\mapsto f(s,q,\mu(s))\}
\end{equation*}
has full measure.
Let $(t,x)\in E\times\RR^d$ and let $(q_n)_n\subset\mathbb{Q}^d$ be such that
$q_n\to x$. By \eqref{eq: lip f},
\begin{equation*}
\begin{aligned}
\Bigl|
\frac1{2h}\int_{t-h}^{t+h}
\bigl(f(s,x,\mu(s))-f(t,x,\mu(t))\bigr)\dd s
\Bigr|
&\le
2C_{f,2}|x-q_n| \\
&\quad+
\frac1{2h}\int_{t-h}^{t+h}
|f(s,q_n,\mu(s))-f(t,q_n,\mu(t))|\dd s.
\end{aligned}  
\end{equation*}
Passing first to the
$\limsup$  as $h\to0$ and then the limit as $n\to\infty$ shows
that $t$ is a Lebesgue point for
$t\mapsto f(t,x,\mu(t))$. Thus, $E\times\RR^d\subseteq\Omega_f$. Since $E$ has full measure,
$E\times B(0,n)$ has full measure in $(0,T)\times B(0,n)$ for every
$n\in\NN^+$. Hence, $\Omega_f$ has full measure in
$(0,T)\times B(0,n)$ for every $n\in\NN^+$. Since
$(0,T)\times\RR^d=\bigcup_{n\geq1}(0,T)\times B(0,n)$,
we conclude that $\Omega_f$ has full measure in
$(0,T)\times\RR^d$. 
Arguing in the same way
and using the properties on $H$, 
\cref{rem: properties H} \onlyref{item: bound H,item: Lip space H,item: Lip grad H}, we obtain that $\Omega_H=\bigcap_{p\in \mathbb{Q}^d}\Omega_{H}(p)$ and it also has full measure.
    Finally, since $\Omega$ is the intersection of finitely many full-measure sets, it follows that $\Omega$ has full measure. 
  Let us take $(t^*,x^*)\in \Omega$ and $\phi\in \CC^\infty_c((0,T)\times\RR^d)$ such that 
    $(t,x)\mapsto v[\mu](t,x)-\phi(t,x)$ has a local maximum in $(t^*,x^*)$. 
    Arguing similarly to the first part of the proof of \cref{prop: existence HJB}, for $\tau \in (t^*,T]$ sufficiently small we have 
\begin{equation*}
        -\partial_t\phi(t^*,x^*)(\tau-t^*)+\int_{t^*}^{\tau} H(s,x^*,D_x\phi(t^*,x^*))\dd s - \int_{t^*}^{\tau}f(s,x^*, \mu(s))\dd s
        \leq     \mathcal{O}\left((\tau -t^*)^2\right).
\end{equation*}
   Dividing by $(\tau-t^*)$ and passing to the limit as $\tau \to t^*$, we obtain that 
\begin{equation*}
   -\partial_t\phi(t^*,x^*)+H(t^*,x^*,D_x\phi(t^*,x^*)) - f(t^*,x^*, \mu(t^*)) 
        \leq   0.
\end{equation*}
 We remark that, unlike in the proof of \cref{prop: existence HJB}, introducing $b\in L^{1}([0,T])$ and $G\in  C([0,T]\times \mathbb{R}^d \times \mathbb{R}^d)$ is not required, since we are testing the solution at points of $\Omega$. Moreover, 
 $\partial_t \phi(t^*,x^*)=\partial_t v[\mu](t^*,x^*)$ and $D_x\phi(t^*,x^*)=D_x  v[\mu] (t^*,x^*)$ allow us to  conclude   that
\begin{equation}\label{eq: gen1}
-\partial_tv[\mu](t^*,x^*)+H(t^*,x^*,D_x v[\mu](t^*,x^*)) - f(t^*,x^*, \mu(t^*)) 
\leq   0.
\end{equation}
Similarly, we can show that 
\begin{equation} \label{eq: gen2}
-\partial_tv[\mu](t^*,x^*)+H(t^*,x^*,D_x v[\mu](t^*,x^*)) - f(t^*,x^*, \mu(t^*)) 
\geq   0.
\end{equation}
Combining  \eqref{eq: gen1}, \eqref{eq: gen2} and the fact that $\RR^d\setminus \Omega $ has zero measure, 
we conclude that $v[\mu]$ is a generalized solution to \eqref{eq:HJB}.
\end{proof}

We now consider the following continuity equation associated with \eqref{eq:MFG}
\begin{equation}\label{eq:CE}\tag{CE}
\begin{aligned}
 \partial_t m-\diver (mD_pH(t,x,D_x  v[\mu]))&= 0 && \text{ in } (0,T)\times \RR^d, \\
  m(0)&=m_0^*  &&\text{ in } \RR^d.\\
\end{aligned}
\end{equation}
\begin{defin}\label{def: sol ce}
   Let $\rm p \in (1,+\infty]$ be as in \ref{hyp: initial measure}.  We say that $m\in L^{\rm p}([0,T]\times \RR^d)$ is a \textit{distributional} solution to \eqref{eq:CE}  if for every $\phi \in \CC^\infty_c( \RR^d)$  and for every $t\in [0,T]$
 \begin{equation*}
        \begin{aligned}
           \int_{\RR^d}\phi(x)m(t,x)\dd x&= \int_{\RR^d}\phi(x)m_0^*(x)\dd x    \\ &- \int_0^t\int _{\RR^d} D_pH(s,x,D_xv[\mu](s,x))\cdot D \phi(x)  m(s,x)\dd x \dd s.
        \end{aligned}
    \end{equation*}
\end{defin}

\begin{defin}\label{def: sol MFG}
    We say that a couple of functions $(v,m)$ is a solution to \eqref{eq:MFG} if $m\in \CC([0,T],\P_1(\RR^d))\cap L^{\rm p}([0,T]\times \RR^d)$, $v=v[m]$ and $m$ satisfies \cref{def: sol ce} with $\mu=m$.
\end{defin}

\begin{theo}\label{thm: unicità}
    Assume \onlyref{hyp: compact set,hyp: lagrangian new,hyp:terminal cost,hyp: initial measure,hyp: monoticityassumption}. If \eqref{eq:MFG}  admits  a solution, then it is  unique. 
\end{theo}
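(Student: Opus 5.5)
We follow the classical Lasry--Lions duality argument, adapted to the present low-regularity framework. Let $(v_1,m_1)$ and $(v_2,m_2)$ be two solutions of \eqref{eq:MFG} in the sense of \cref{def: sol MFG}, so that $v_i=v[m_i]$. By \cref{prop:v is generalized solution}, each $v_i$ is Lipschitz, semiconcave, differentiable a.e., and satisfies $-\partial_t v_i+H(t,x,D_xv_i)=f(t,x,m_i(t))$ a.e. Set $w\coloneqq v_1-v_2$ and $\rho\coloneqq m_1-m_2$. The goal is the identity
\begin{equation*}
\int_{\RR^d} w(T)\,\dd\rho(T)-\int_{\RR^d} w(0)\,\dd\rho(0)=\int_0^T\!\!\int_{\RR^d}\Big(\partial_t w\,\rho-D_x w\cdot\big(D_pH(\cdot,D_xv_1)m_1-D_pH(\cdot,D_xv_2)m_2\big)\Big)\dd x\,\dd t .
\end{equation*}
Since $\rho(0)=0$ and $w(T)=g(\cdot,m_1(T))-g(\cdot,m_2(T))$, substituting the a.e. equations for $\partial_t v_i$ turns this into
\begin{equation*}
\begin{aligned}
&\int_{\RR^d}\big(g(x,m_1(T))-g(x,m_2(T))\big)\dd[m_1(T)-m_2(T)]+\int_0^T\!\!\int_{\RR^d}\big(f(t,x,m_1)-f(t,x,m_2)\big)\dd[m_1-m_2]\,\dd t\\
&\quad+\int_0^T\!\!\int_{\RR^d}\Big[H(D_xv_2)-H(D_xv_1)-D_pH(D_xv_1)\cdot(D_xv_2-D_xv_1)\Big]m_1\,\dd x\,\dd t\\
&\quad+\int_0^T\!\!\int_{\RR^d}\Big[H(D_xv_1)-H(D_xv_2)-D_pH(D_xv_2)\cdot(D_xv_1-D_xv_2)\Big]m_2\,\dd x\,\dd t=0.
\end{aligned}
\end{equation*}
By the convexity of $H$ in $p$ (\cref{rem: properties H}\ref{item: convex H}) the two bracketed terms are nonnegative. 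By \ref{hyp: monoticityassumption} the $g$ term is nonnegative, and the $f$ term is nonnegative with strict positivity on the set of $t$ where $m_1(t)\neq m_2(t)$. Hence $m_1(t)=m_2(t)$ for a.e. $t$, and by continuity in $\P_1$ for all $t$. Consequently $v_1=v[m_1]=v[m_2]=v_2$. Convexity alone suffices here: the lack of strict convexity of $H$ is harmless because the strictness comes from $f$.

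The main obstacle is justifying the duality identity, since $w$ is only Lipschitz and is not an admissible test function in \cref{def: sol ce}, which uses $\phi\in\CC_c^\infty(\RR^d)$ independent of time. We would proceed in three steps.
\begin{enumerate}
\item Extend \cref{def: sol ce} to time-dependent test functions $\phi\in\CC_c^\infty([0,T]\times\RR^d)$ by a standard Riemann-sum or density argument, which is possible because $t\mapsto\int\phi\, m(t)$ is absolutely continuous.
\item Apply this to mollifications $w^\eps=w*\eta_\eps$, localized by cutoffs $\chi_R$. The supports of $m_i(t)$ lie in $\ov B(0,C_0^*+C_{\rm b}T)$, since the velocity is bounded by $C_{\rm b}$, so the cutoffs cause no trouble.
\item Pass to the limit $\eps\to0$. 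We have $\partial_t w^\eps\to\partial_t w$ and $D_xw^\eps\to D_xw$ a.e. with uniform bounds from \cref{prop: properties HJB}\ref{item: Lip v}. Since $m_i\in L^{\mathsf p}$ with $\mathsf p>1$ and is compactly supported, dominated convergence applies to the space-time integrals. The boundary terms converge because $w^\eps\to w$ uniformly and $m_i(t)$ are probability measures.
\end{enumerate}
The point to watch is that the a.e. identities for $\partial_t v_i$ are only used against the absolutely continuous measures $m_i\,\dd x\,\dd t$. This is exactly why the $L^{\mathsf p}$-regularity of $m_i$ is required in \cref{def: sol MFG}: a.e. statements would be meaningless against singular measures.
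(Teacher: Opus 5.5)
Your proposal is correct and follows essentially the same route as the paper. The paper obtains uniqueness from the classical Lasry--Lions duality argument of Cardaliaguet--Porretta and Cardaliaguet's notes, relying on the Lipschitz bounds for $v[m_i]$, the generalized (a.e.) solution property of $v[m_i]$, the convexity of $H$ in $p$, and strict monotonicity of $f$ to compensate for the lack of strict convexity of $H$. The only difference is that you spell out the mollification and cutoff justification of the duality identity, which the paper delegates to those references; there, the support bound is not even needed, since the cutoff errors vanish by uniform tightness of $m_i(t)$.
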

\begin{proof}
     The uniqueness result is obtained as in  \cite[Theorem 8]{CardaliaguetPorretta} and \cite[Theorem 3.6]{cardaliaguet2010notes}. We observe that, since the Hamiltonian is not strictly convex with respect to the gradient variable,  in \ref{hyp: monoticityassumption} we require strict monotonicity of the coupling cost $f$.  The proof relies in particular on \cref{rem: properties H} (including the convexity of $H$ in $p$), \ref{hyp: monoticityassumption}, \cref{prop: properties HJB} \onlyref{item: bound v,item: Lip v}, and \cref{prop:v is generalized solution}.
\end{proof}
\begin{theo}\label{thm: exixtence}
    Assume \onlyref{hyp: compact set,hyp: lagrangian new,hyp:terminal cost,hyp: initial measure}. Then, \eqref{eq:MFG} admits at least one solution.
\end{theo}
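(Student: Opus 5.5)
The existence of a solution to \eqref{eq:MFG} can be obtained either through a fixed-point argument or, as announced in the introduction, as a by-product of the convergence of the numerical scheme developed in Section~4. I describe a direct fixed-point route first, since it is the natural skeleton that the numerical proof also follows. I would work in the convex set
\[
\mathcal{K}\coloneqq\Bigl\{\mu\in \CC([0,T];\P_1(\RR^d)) \,\Big|\, \supp\mu(t)\subseteq \ov B(0,C_0^*+C_{\rm b}T),\ d_1(\mu(t),\mu(s))\le C_{\rm b}|t-s|\Bigr\},
\]
which is compact by Arzel\`a--Ascoli and Prokhorov, and intersect it with an $L^{\mathsf p}$-bound to be determined. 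Given $\mu\in\mathcal K$, \cref{prop: uniqueness HJB} provides $v[\mu]$. By \cref{prop: properties HJB}\ref{item: Lip v}, $D_xv[\mu]$ is bounded, uniformly in $\mu$. By \cref{prop: properties HJB}\ref{item: semiconc v}, the semiconcavity constant is also uniform in $\mu$.

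The key step is solving \eqref{eq:CE} with the vector field $b(t,x)=-D_pH(t,x,D_xv[\mu](t,x))$, which is bounded but only measurable in $t$ and not Lipschitz in $x$. Here I would exploit semiconcavity. Since $p\mapsto H$ is convex and $C^{1,1}$ and $D_xv$ is the gradient of a semiconcave function, one expects a one-sided bound $\diver_x\bigl(D_pH(t,x,D_xv)\bigr)\le C$ in the sense of measures. Proving this rigorously with $H$ only $C^{1,1}$ is the main obstacle. I would try regularizing: mollify $v[\mu]$ in $x$ to $v_\eps$, which keeps the semiconcavity constant, and mollify $H$ in $p$ to $H_\eps$. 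I would then compute $\diver\bigl(D_pH_\eps(t,x,D_xv_\eps)\bigr)=\mathrm{tr}\bigl(D^2_{pp}H_\eps D^2_{xx}v_\eps\bigr)+\mathrm{tr}\bigl(D^2_{px}H_\eps\bigr)$. The first term is at most $C_{\rm sc}\,\mathrm{tr}(D^2_{pp}H_\eps)\le C$, because $D^2_{pp}H_\eps\succeq0$ has norm at most $c_H$ by \eqref{eq:Lips_gradient}. The second term is bounded by $dc_H$. For the limit I would rely on Clarke's generalized Jacobian: the bounds survive because they only involve elements of $\partial_C D_pH$, which are symmetric positive semidefinite with norm at most $c_H$.

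With the regularized field, the flow $\Phi_\eps$ is well defined by Carath\'eodory theory. The Jacobian bound $\det D\Phi_\eps\ge e^{-Ct}$ gives $\|m_\eps(t)\|_{L^{\mathsf p}}\le e^{Ct(1-1/\mathsf p)}\|m_0^*\|_{L^{\mathsf p}}$ for $m_\eps(t)=\Phi_\eps(t)_\#m_0^*$. The supports stay in $\ov B(0,C_0^*+C_{\rm b}T)$, and $m_\eps$ is $C_{\rm b}$-Lipschitz in $d_1$. By compactness, a subsequence converges in $\CC([0,T];\P_1)$ and weakly-$*$ in $L^{\mathsf p}$. Since $D_xv_\eps\to D_xv[\mu]$ a.e. and $D_pH$ is continuous and bounded, the product $D_pH_\eps(\cdot,D_xv_\eps)m_\eps$ converges weakly. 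Here strong a.e. convergence of the field is paired with weak convergence of the density, using dominated convergence in $L^{\mathsf p'}$. The limit is a distributional solution as in \cref{def: sol ce}. Uniqueness of this solution under the one-sided Lipschitz/semiconcavity condition would follow from a duality argument; one could alternatively avoid uniqueness and use Kakutani's theorem for the set-valued map.

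Finally, I would define $\Psi(\mu)=m$ on $\mathcal K\cap\{\|m(t)\|_{L^{\mathsf p}}\le C\}$. Continuity of $\Psi$ follows from two facts. First, if $\mu_n\to\mu$, then the continuity in \ref{hyp:terminal cost} and stability of $L^1$-viscosity solutions (together with \cref{prop: uniqueness HJB}) give $v[\mu_n]\to v[\mu]$ locally uniformly. Second, uniform semiconcavity then gives $D_xv[\mu_n]\to D_xv[\mu]$ a.e. Passing to the limit in the continuity equation as above, and using uniqueness of limits, gives continuity. Schauder's theorem then produces a fixed point $m=\Psi(m)$, and $(v[m],m)$ solves \eqref{eq:MFG} in the sense of \cref{def: sol MFG}. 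Alternatively, in the paper's logic, this statement follows by taking limits of the discrete solutions of Section~4; the same two ingredients are needed there, namely $L^1$-stability for the HJB equation and the nonsmooth divergence bound for the continuity equation.
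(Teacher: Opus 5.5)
Your main argument is a genuinely different route from the paper's. The paper proves this theorem in one line, as a corollary of \cref{thm: convergence scheme}. Fixed points of the fully discrete system exist by Brouwer's theorem on the finite-dimensional set $\DD_\Delta$ (\cref{thm: existence MFGd}), where the discrete continuity equation is an explicit, single-valued, continuous map. Uniform support, $d_1$-Lipschitz and $L^{\mathsf p}$ bounds then give compactness. The limit is identified through $L^1$-viscosity stability of the time-averaged semi-Lagrangian scheme and a.e.\ convergence of gradients from approximate semiconcavity. The $C^{1,1}$ difficulty is handled there by Clarke's generalized Jacobian of the discrete flow (\cref{prop: stability CE}).

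Your continuous-level fixed point uses the same mechanism for the density bound: a symmetric positive semidefinite $p$-block of norm at most $c_H$, the trace inequality, and $D^2_{xx}v\le C_{\rm sc}I$. It replaces the nonsmooth calculus by mollifying $H$. It also needs neither the parameter couplings $\Delta x=o(\Delta t)$, $\Delta t=\mathcal O(\eps^2)$ nor $L^1$-viscosity stability: $v[\mu_n]\to v[\mu]$ locally uniformly already follows from the control representation, since $f,g$ are equi-Lipschitz in $x$ and continuous in the measure, and trajectories stay in compact sets. What your route does not give is the constructive statement and the convergence of the scheme, which is what the paper's route buys.

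Two points need fixing.

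\emph{First, the Schauder branch is not justified.} Continuity of a single-valued $\Psi$ requires uniqueness for the continuity equation with field $D_pH(t,x,D_xv[\mu])$, and a standard duality argument does not provide it here. Semiconcavity bounds the divergence of the velocity $-D_pH(t,x,D_xv)$ from \emph{below}, so the field is expansive. The dual backward transport equation, whose characteristics are the forward ones, then has in general no Lipschitz (or even continuous) solutions, because characteristics split at concave kinks of $v$. For instance, in dimension one with $H(p)=p^2/2$ and $D_xv=-\mathrm{sign}(x)$, the velocity is $\mathrm{sign}(x)$, and the dual solution $\varphi_\tau(x+\mathrm{sign}(x)(\tau-t))$ jumps at $x=0$. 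The paper never proves uniqueness for the continuity equation alone; its uniqueness result is the Lasry--Lions argument for the pair.

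So commit to the set-valued version. Let $\Psi(\mu)$ be the set of all distributional solutions that lie in $\mathcal K$ and satisfy the uniform $L^{\mathsf p}$ bound. This set is nonempty by your regularization. It is convex because the equation is linear in $m$ and the constraints are convex; the $L^{\mathsf p}$ ball is also closed under $d_1$-convergence by weak lower semicontinuity. Its graph is closed by exactly your limit passage: a.e.\ convergence of the fields from uniform semiconcavity, together with weak $L^{\mathsf p}$ compactness of the densities. After embedding $\mathcal K$ into $C([0,T];\mathcal M(\ov B))$ with the bounded-Lipschitz norm, which is equivalent to $d_1$ on probability measures supported in a fixed ball, $\mathcal K$ is convex and compact, and Kakutani--Fan--Glicksberg applies.

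\emph{Second, mollify $H$ jointly in $(x,p)$, not only in $p$.} $D_pH$ is merely Lipschitz in $x$, so otherwise $D^2_{px}H_\eps$ and your divergence formula are not classical. With joint mollification, $D_pH_\eps$ still takes values in $A$, is $c_H$-Lipschitz, and has a symmetric positive semidefinite $p$-block. Hence $\diver_x D_pH_\eps(t,x,D_xv_\eps)\le dc_H(1+C_{\rm sc})$ uniformly in $\eps$. No generalized Jacobian is then needed in the limit, since the $L^{\mathsf p}$ bound passes to the limit by weak lower semicontinuity.
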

\begin{proof}  The existence follows from the convergence of the scheme established in \cref{thm: convergence scheme} below.
\end{proof}

\section{Numerical approximation of \eqref{eq:MFG}}\label{sec:Schemes}
In this section, we introduce a Lagrangian scheme  for approximating the solution to \eqref{eq:MFG}. This constitutes the core of the paper and extends the scheme proposed in \cite{CarliniSilvaZorkot} to the present setting. 
We begin by introducing the temporal and spatial discretization together with the associated notation that  will be used throughout the remainder of the paper. 

Let $N_T \in \NN^+$ be the number of time steps and let $\Delta t=T/N_T$ be the time step. 
 We define the sets of indices  
$\It\coloneqq\{0,\dots, N_T\}$ and $\It ^*\coloneqq\It \setminus \{N_T\}$, and the time-grids $\GG_{\Delta t}\coloneqq \{t_{k}=k\Delta t \, | \, k\in \It\}$ and $\GG^*_{\Delta t}\coloneqq \{t_{k}=k\Delta t \, | \, k\in \It^*\}$.
Let $\Delta x>0$ be a space step, we set  the $d-$ dimensional space-grid
$\GG_{\Delta x}\coloneqq \{x_{i}=i\Delta x \, | \,  i\in \ZZ^d\}$. Let us consider the couple $\Delta =(\Delta t,\Delta x)$, and we define  the time-space grids $\GG_{\Delta}\coloneqq \GG_{\Delta t}\times \GG_{\Delta x}$ and    $\GG^*_{\Delta}\coloneqq \GG_{\Delta t}^*\times \GG_{\Delta x}$.   Given $h \in \BB(\GG_{\Delta x})$ and $l\in \BB(\GG_{\Delta})$, we use the notation 
\begin{equation*}
    h_i\coloneqq h(x_i) \qquad l_{k,i}\coloneqq l(t
    _k,x_i) \qquad \text{ for every } (k,i)\in \It\times \ZZ^d,
 \end{equation*} 
 and, for $k\in \It$, we define the function $l_{k}\in \BB(\GG_{\Delta x})$ such that  $ (l_{k})_{i}\coloneqq l_{k,i}$ for every $i\in \ZZ^d$.
 We define $Q_T=[0,T]\times \RR^d$. Given $\varphi \in \CC_{b}(\RR^{d})$, $\psi\in \CC_{b}(Q_T)$ and $t\in [0,T]$, we define the functions  $\hat \varphi, \hat \psi(t)  \in\BB(\GG_{\Delta x})$ and $\hat \psi \in \BB(\GG_{\Delta})$ such that 
\begin{equation*}
  \hat \varphi _i\coloneqq \varphi(x_i) \qquad \hat \psi _{k,i}\coloneqq\psi(t_k,x_i) \qquad \hat \psi(t)_i=\psi(t,x_i) \qquad  \text{ for every } (k,i)\in \It\times \ZZ^d.
\end{equation*}
We consider the $\mathbb{Q}_1$ basis of functions  $\{\beta^{\Delta x}_{i}\colon\RR^{d}\to\RR \, |\,  i\in\ZZ^d\}$ (see \cite{QuarteroniValli}) and  the interpolation operator $I_{\Delta x}\colon\BB(\GG_{\Delta x})\to \CC_{b}(\RR^{d})$ defined by  
\begin{equation*}\label{interpolation operator}
    I_{\Delta x}[W](\cdot)\coloneqq \sum_{i\in \ZZ^d}W_{i}\beta^{\Delta x}_{i}(\cdot).
\end{equation*}
We have that 
\begin{equation} \label{interperr}
    \sup_{x\in \RR^d}|I_{\Delta x}[\hat{\phi}](x)-\phi(x)|= \mathcal{O}\left((\Delta x)^{\lambda}\right),
\end{equation}
where $\lambda=1$ if $\phi$ is Lipschitz, and $\lambda=2$ if $ \phi\in \CC^2(\RR^d)$ with bounded first and second order derivatives 
(see \cite{QuarteroniValli}, Theorem~3.4.2 and Remark~3.4.2). Moreover, if $W\in \BB(\GG_{\Delta x})$ with  $|W_i-W_j|\leq L|x_i-x_j|$ for $L>0$ for every $i,j\in \ZZ^d $, then 
\begin{equation}\label{eq: Lip interp}
    I_{\Delta x}[W] \quad\text{is  Lipschitz with constant}\quad  \sqrt{d}L.
\end{equation}

\subsection{Semi-Lagrangian scheme for the Hamilton-Jacobi-Bellman equation }\label{A_fully_discrete_scheme_for_the_Hamilton_Jacobi_Bellman_equation}
We propose the following  semi-Lagrangian scheme for \eqref{eq:HJB}, which differs from standard semi-Lagrangian formulations in the
discretization in time of the running cost and the coupling term.

Given $\mu \in \CC([0,T];\P_1(\RR^d))$,  we define the  operator  ${S}_{\Delta}[\mu]\colon\BB(\GG_{\Delta x})\times \GG_{\Delta}^* \to \RR$ by
\begin{equation}\label{eq:infS}
    {S}_{\Delta}[\mu](W,t_k,x_i)\coloneqq \inf_{a\in A }\left\{\int_{t_k}^{t_{k+1}} \ell(s,x_i,a)\dd s+I_{\Delta x}[W](x_{i}-\Delta t a)\right\}+\int_{t_k}^{t_{k+1}}f(s,x_{i},\mu(t_k))\dd s.
\end{equation}
The numerical approximation is then defined recursively as following:
define $v\in \BB(\GG_{\Delta})$ as
\begin{equation}\label{eq:HJBdiscr}\tag{${\rm HJB}_{\Delta}$}
            \begin{aligned}
               &v_{k,i}={S}_{\Delta}[\mu](v_{k+1},t_k,x_i) &&\text{for all   }(t_k,x_i)\in \GG_{\Delta}^*, \\
               &v_{N_T,i}=g(x_i,\mu(T)) &&  \text{for all  }x_i\in \GG_{\Delta x}. 
            \end{aligned}
\end{equation}
We define its extension $v_{\Delta}[\mu]\colon[0,T]\times \RR^d\to \RR$ as
\begin{equation*}\label{vdelta}
   v_{\Delta}[\mu](t,x)\coloneqq I_{\Delta x}\left[v_{\left\lceil\frac{t}{\Delta t}\right\rceil}\right](x).
\end{equation*}
We define the {\it numerical Hamiltonian} ${H}_{\Delta}\colon\BB(\GG_{\Delta x})\times \GG_{\Delta} \to \RR$ as
\begin{equation}\label{eq: Discrete Hamiltonian}
    {H}_{\Delta}(W,t_k,x_i)\coloneqq \frac{1}{\Delta t}\left ( W_i- \inf_{a\in A }\left\{\int_{t_k}^{t_{k+1}} \ell(s,x_i,a)\dd s+I_{\Delta x}[W](x_{i}-\Delta t a)\right\}\right).
\end{equation}
We observe that for every $W\in \BB(\GG_{\Delta x})$
\begin{equation}\label{eq: relation H S}
    H_{\Delta}(W,t_k,x_i)= \frac{1}{\Delta t}\left ( W_i- S_{\Delta}[\mu](W,t_k,x_i)+ \int_{t_k}^{t_{k+1}}f(s,x_{i},\mu(t_k))\dd s\right) \quad \text{for all }(t_k,x_i)\in \GG_{\Delta}.
\end{equation}
The following consistency property will be used to prove the convergence of the scheme \eqref{eq:HJBdiscr} to the $L^1-$ viscosity solution to \eqref{eq:HJB}. 
\begin{prop}\label{properties infS}
Assume \onlyref{hyp: compact set,hyp:terminal cost,hyp: lagrangian new}, let $\Delta=(\Delta t,\Delta x)\in(0,\infty)^2$ and $\mu\in \CC([0,T];\P_1(\RR^d))$.
Then the following hold:
 \begin{enumerate}[label={\rm(\roman*)},     ref={\rm(\roman*)}]
\item \label{item: well pos SL}  {\rm(Well-posedness)}  The scheme \eqref{eq:HJBdiscr} admits a unique solution  and  there exists at least one $a\in A$  minimizing
the right-hand-side of \eqref{eq:infS}. 
\item \label{item: Monotone SL}   {\rm(Monotonicity)} 
For every  $u,\widetilde u\in \BB(\GG_{\Delta_x})$ such that $u\leq \widetilde u $, we have 
\begin{equation*}
{S}_{\Delta}[\mu](u,t_k,x_i)\leq {S}_{\Delta}
[\mu](\widetilde u,t_k,x_i) \quad \text{for all } (t_k,x_i)\in \GG_{\Delta}^*.
\end{equation*}
\item \label{item: invariance SL}  {\rm(Invariance with respect to constants)} 
For every  $u \in \BB(\GG_{\Delta_x})$ and $K\in \RR$, we have
\begin{equation*}
{S}_{\Delta}[\mu](u+K,t_k,x_i)={S}_{\Delta}[\mu](u,t_k,x_i)+K\quad \text{for all } (t_k,x_i)\in \GG_{\Delta}^*.
\end{equation*}
\item \label{item: error SL}  {\rm(Error of the numerical Hamiltonian)} 
Let  $\Delta= (\Delta t, \Delta x)\in(0,\infty)^2$ and $\mu\in \CC([0,T];\P_1(\RR^d))$. Then, for every    $\phi \in \CC_b^2(\RR^d)$ and for every $(t_{k},x_{i})\in \GG_{\Delta}^*$, we have
\begin{equation*}
\begin{aligned}
& \left|H_{\Delta}(\hat \phi,t_{k},x_{i})-\sup_{a\in A}\left\{a \cdot D\phi(x_{i})-\frac{1}{\Delta t}\int_{t_{k}}^{t_{k+1}}\ell(s,x_{i},a)\dd s \right\}\right| = \mathcal{O}\left(\frac{(\Delta x)^2}{\Delta t}+\Delta t\right).
 \end{aligned}
\end{equation*}
\item \label{item: consistency SL}  {\rm(Consistency)}
Let  $(\Delta _n)_{n \in \NN}=((\Delta t_n, \Delta x_n))_{n\in \NN}\subset(0,\infty)^2$ with $N^n_T\Delta t_n=T$, 
$(x_{i_{n}})_{n\in \NN}\subset \GG_{\Delta x_n}$, and $x^*\in \RR^d$ be such that, as $n\to \infty$,
$\Delta_n\to 0$,   $(\Delta x_n)^2/\Delta t_n\to 0$,
  and $x_{i_{n}}\to x^*$. Then, for every    $\phi \in \CC_b^2(\RR^d)$, we have
\begin{equation*}
\begin{aligned} 
\lim_{n\to \infty}\sup_{t \in [0,T)} &\left | \sum_{k_n=\left \lfloor \frac{t}{\Delta t_n}\right\rfloor}^{N^n_T-1}\Delta t_nH_{\Delta_n}(\hat \phi,t_{k_n},x_{i_n}) -\int_{t}^TH(s,x^*,D\phi(x^*))\dd s \right |=0.  
\end{aligned}
\end{equation*} 
\end{enumerate}
\end{prop}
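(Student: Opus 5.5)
Items (i)--(iii) are structural and I would dispose of them first. For (i), the scheme is explicit backward in time: $v_{N_T}$ is given by $g$ and is bounded by \eqref{eq: bound g}, and each $v_k$ is obtained from $v_{k+1}$ by \eqref{eq:infS}. Existence of a minimizer follows from compactness of $A$ (by \ref{hyp: compact set}). Continuity of the objective in $a$ also needs checking. The map $a\mapsto\int_{t_k}^{t_{k+1}}\ell(s,x_i,a)\dd s$ is continuous by dominated convergence, using \eqref{eq: bounded L} and the continuity of $\ell(s,x_i,\cdot)$. The map $a\mapsto I_{\Delta x}[W](x_i-\Delta t a)$ is continuous since $I_{\Delta x}[W]$ is continuous. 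Boundedness of $v_k$ then follows inductively from \eqref{eq: bounded L}, \eqref{eq: bound f}, and $\sum_i\beta_i^{\Delta x}\equiv1$, $\beta_i^{\Delta x}\ge0$. For (ii), the interpolation operator is monotone because the $\mathbb{Q}_1$ basis functions are nonnegative, and taking an infimum preserves order. For (iii), $I_{\Delta x}[u+K]=I_{\Delta x}[u]+K$ by the partition of unity, and $K$ passes through the infimum.

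For (iv), I will use the relation $\inf\{\cdots\}=-\sup\{-\cdots\}$ and the elementary bound $|\sup F-\sup G|\le\sup|F-G|$. This reduces the claim to estimating, uniformly in $a\in A$,
\[
\Bigl|\tfrac{1}{\Delta t}\bigl(\phi(x_i)-I_{\Delta x}[\hat\phi](x_i-\Delta t a)\bigr)-a\cdot D\phi(x_i)\Bigr| .
\]
The key observation is that $\hat\phi_i=\phi(x_i)$ because $x_i$ is a grid node, so $W_i=\phi(x_i)$ exactly. By \eqref{interperr} with $\lambda=2$, $I_{\Delta x}[\hat\phi](x_i-\Delta t a)=\phi(x_i-\Delta t a)+\mathcal{O}((\Delta x)^2)$. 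A second-order Taylor expansion, with $|a|\le C_{\rm b}$, gives $\phi(x_i)-\phi(x_i-\Delta t a)=\Delta t\, a\cdot D\phi(x_i)+\mathcal{O}((\Delta t)^2)$. Dividing by $\Delta t$ yields the bound $\mathcal{O}((\Delta x)^2/\Delta t+\Delta t)$, uniformly in $k$, $i$ and $a$.

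For (v), which I expect to be the main obstacle, I will combine (iv) with an $L^1$-in-time argument. By (iv), $\Delta t_n H_{\Delta_n}(\hat\phi,t_{k},x_{i_n})$ equals
\[
\Delta t_n\sup_{a\in A}\Bigl\{a\cdot D\phi(x_{i_n})-\tfrac{1}{\Delta t_n}\int_{t_k}^{t_{k+1}}\ell(s,x_{i_n},a)\dd s\Bigr\}
\]
up to an error $\Delta t_n\,\mathcal{O}((\Delta x_n)^2/\Delta t_n+\Delta t_n)$. Summed over at most $N_T^n$ indices, this error is $\mathcal{O}(T((\Delta x_n)^2/\Delta t_n+\Delta t_n))\to0$, uniformly in $t$. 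Next, $x_{i_n}$ can be replaced by $x^*$ in both $\ell$ and $D\phi$ at a cost $\mathcal{O}(T|x_{i_n}-x^*|)$, using \eqref{eq: lip L} and the Lipschitz continuity of $D\phi$.

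The delicate point is that the remaining supremum sits \emph{outside} the time average. Writing $p^*=D\phi(x^*)$, the Jensen-type inequality gives
\[
\sup_a\Bigl\{\int_{t_k}^{t_{k+1}}(a\cdot p^*-\ell(s,x^*,a))\dd s\Bigr\}\le\int_{t_k}^{t_{k+1}}H(s,x^*,p^*)\dd s,
\]
but only in one direction. To control the reverse gap, I plan to approximate $\ell(\cdot,x^*,\cdot)$ in $L^1(0,T;C(A))$ by a function $\tilde\ell$ that is piecewise constant in time on a fixed partition. This is possible because $s\mapsto\ell(s,x^*,\cdot)\in C(A)$ is Bochner measurable (as $C(A)$ is separable) and bounded. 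I then write $\|\ell-\tilde\ell\|_{L^1(C(A))}\le\eps$. For $\tilde\ell$, on every grid interval not containing a breakpoint, the sup and the integral commute exactly. The intervals containing breakpoints are finitely many, and each contributes $\mathcal{O}(\Delta t_n)$ by the bounds \eqref{eq: bounded L} and $|a|\le C_{\rm b}$. Replacing $\ell$ by $\tilde\ell$ costs at most $2\eps$ in both the scheme sum and $\int H$, since $|\sup F-\sup G|\le\sup|F-G|$ holds on each subinterval. Finally, the mismatch between $\lfloor t/\Delta t_n\rfloor\Delta t_n$ and $t$ costs $\mathcal{O}(\Delta t_n)$ by \cref{rem: properties H}\ref{item: bound H}. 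Letting $n\to\infty$ and then $\eps\to0$ gives the uniform-in-$t$ limit.
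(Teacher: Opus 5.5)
Your proposal is correct. Items (i)--(iv) follow the paper's argument; in (iv) you use the interpolation estimate \eqref{interperr} with $\lambda=2$, which is the estimate actually needed there. The genuine difference is in (v), at the step where the supremum over $A$ must be commuted with the time integral on each grid cell.

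The paper works at the level of controls. It invokes the interchange theorem for normal integrands (Rockafellar--Wets) to obtain a measurable maximizer $\alpha^*(s)$ of $a\mapsto a\cdot D\phi(x^*)-\ell(s,x^*,a)$. It then approximates $\alpha^*$ in $L^2$ by controls $\alpha_n$ that are constant on the cells of the $n$-th grid, hence admissible in each cellwise supremum. The resulting upper bound is combined with the one-sided Jensen inequality in a liminf/limsup sandwich. The price is a measurable-selection theorem and the convergence $\ell(s,x^*,\alpha_n(s))\to\ell(s,x^*,\alpha^*(s))$, which the paper only obtains along a.e.-convergent subsequences; a sub-subsequence argument is needed to recover the whole sequence.

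You instead approximate the data. Since $s\mapsto\ell(s,x^*,\cdot)$ is a bounded Carath\'eodory map into the separable space $C(A)$, it is strongly measurable. It can therefore be approximated in $L^1(0,T;C(A))$ by step functions on a fixed partition, and on their constancy intervals the interchange is exact. The inequality $|\sup F-\sup G|\le\sup|F-G|$ controls the replacement error by $\mathcal{O}(\eps)$. This yields a two-sided estimate directly, uniform in $t$, with no selection theorem and no optimal control. The cost is the standard facts on Bochner measurability and density of step functions. The paper's route, in turn, stays closer to the control-theoretic arguments used elsewhere (e.g.\ in \cref{prop: existence HJB}).

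One small precision: the cells meeting a breakpoint contribute $\mathcal{O}_\eps(\Delta t_n)$, not $\mathcal{O}(\Delta t_n)$. Their number depends on $\eps$. Moreover, \eqref{eq: bounded L} bounds $\ell$, not $\tilde\ell$; you can fix this either by truncating the step function's values at level $\ov C_{\ell,1}$, which does not increase its distance to $\ell$, or by comparing with $\ell$ on those cells. This is harmless because you let $n\to\infty$ before $\eps\to0$.

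Your explicit $\mathcal{O}(\Delta t_n)$ bound for the mismatch between $\lfloor t/\Delta t_n\rfloor\Delta t_n$ and $t$, via \cref{rem: properties H}, also makes the uniformity in $t$ more transparent than the paper's appeal to dominated convergence.
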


\begin{proof}
Since the scheme is explicit, $A$ is compact, and $a\mapsto\int_{t_k}^{t_{k+1}} \ell(s,x_i,a)\dd s+ I_{\Delta x}[W](x_{i}-\Delta t a)$  is continuous, assertion \ref{item: well pos SL} follows.   The proofs of  \onlyref{item: Monotone SL,item: invariance SL} are straightforward. 
   The one-step  error \ref{item: error SL} is obtained as usual from \eqref{eq: Discrete Hamiltonian}, a Taylor second order expansion of  $\phi$, and \eqref{eq: Lip interp}.
Assertion \ref{item: consistency SL} is a consequence of \ref{item: error SL}. Indeed, we define $h_n(s)\coloneqq t_{\left\lfloor\frac{s}{\Delta t_n}\right\rfloor}$ and $\mathcal{J}_n(s)\coloneqq \left\{\left\lfloor\frac{s}{\Delta t_n}\right\rfloor,\dots,N^n_T-1\right\}$ for every $s\in [0,T]$. Observing that, for every $t \in [0,T]$,
\begin{equation*}
\begin{aligned}
&\sum_{k_n\in \mathcal{J}_n(t)}\left | \Delta t_nH_{\Delta_n}(\hat \phi,t_{k_n},x_{i_n})-\sup_{a\in A} \left \{\Delta t_n a \cdot D\phi(x_{i_n})- \int_{t_{k_n}}^{t_{k_n+1}}\ell(s,x_{i_n},a)\dd s \right \}\right |\\ 
& =\mathcal{O}\left(\frac{(\Delta x_n)^2}{\Delta t_n}+\Delta t_n\right),
\end{aligned}
\end{equation*}
and, by adding and subtracting 
$$
\int_{h_n(t)}^{T}
H(s,x^*,D\phi(x^*))\dd s+
\sum_{k_n\in \mathcal{J}_n(t)}
\sup_{a\in A}
\left\{
\Delta t_n\, a\cdot D\phi(x^*)
-
\int_{t_{k_n}}^{t_{k_n+1}}
\ell(s,x^*,a)\dd s
\right\},
$$ we have
\begin{equation}\label{eq:consitencyterm}
    \begin{aligned}
        & \left|\sum_{k_n\in \mathcal{J}_n(t)} \Delta t_nH_{\Delta_n}(\hat \phi,t_{k_n},x_{i_n})-\int_{t}^T\left(H(s,x^*,D\phi(x^*)\right)\dd s \right|\\ 
                &\leq \mathcal{O}\left(\frac{(\Delta x_n)^2}{\Delta t_n}+\Delta t_n\right ) +\left |   \int_{h_n(t)}^{T}H(s,x^*,D\phi(x^*))\dd s-  \int_{t}^{T}H(s,x^*,D\phi(x^*))\dd s \right | \\ 
                &\quad +\left | \sum_{k_n\in \mathcal{J}_n(t)}\left (\sup_{a\in A} \left \{\Delta t_n a \cdot D\phi(x_{i_n})- \int_{t_{k_n}}^{t_{k_n+1}}\ell(s,x_{i_n},a)\dd s \right \}  \right. \right.\\ &\quad  \left. \left. -\sup_{a\in A} \left \{\Delta t_n a \cdot D\phi(x^*)- \int_{t_{k_n}}^{t_{k_n+1}}\ell(s,x^*,a)\dd s \right \} \right )\right |  \\
          & \quad+\left | \sum_{k_n\in \mathcal{J}_n(t)} \sup_{a\in A} \left \{\Delta t_n a \cdot D\phi(x^*)- \int_{t_{k_n}}^{t_{k_n+1}}\ell(s,x^*,a)\dd s \right \}  - \int_{h_n(t)}^{T}H(s,x^*,D\phi(x^*))\dd s \right |.  \\
    \end{aligned}
\end{equation}
We now derive estimates for all the terms appearing on the right-hand side of \eqref{eq:consitencyterm}. The second term in the right-hand side of \eqref{eq:consitencyterm}
 goes to zero, as $n\to \infty$, due to the dominated convergence theorem. 
Using $|\sup w-\sup u|\leq\sup |w-u|$ the term 
\begin{equation*}
\begin{aligned}
  &\left | \sum_{k_n\in \mathcal{J}_n(t)}\left (\sup_{a\in A} \left \{\Delta t_n a \cdot D\phi(x_{i_n})- \int_{t_{k_n}}^{t_{k_n+1}}\ell(s,x_{i_n},a)\dd s \right \}\right.\right.\\ & \left.\left. -\sup_{a\in A} \left \{\Delta t_n a \cdot D\phi(x^*)- \int_{t_{k_n}}^{t_{k_n+1}}\ell(s,x^*,a)\dd s \right \} \right )\right | \\  
 &\leq |x_{i_n}-x^*| (C_{\rm b}C_{\phi}T+\ov  C_{\ell,2}T).
\end{aligned}
\end{equation*}
 goes to zero, as $n\to \infty$, since $x_{i_n}\to x^*$.  
Let us  show that the last term in  the right-hand side of \eqref{eq:consitencyterm}
  goes to zero, as $n\to \infty$. First, we observe that 
 \begin{equation*}\label{eq: cons 1}
      \begin{aligned}
         & \sum_{k_n\in \mathcal{J}_n(t)} \left( \int_{t_{k_n}}^{t_{k_n+1}}H(s,x^*,D\phi(x^*))\dd s-\sup_{a\in A} \Big \{  \int_{t_{k_n}}^{t_{k_n+1}}(a \cdot D\phi(x^*)-\ell(s,x^*,a))\dd s \Big \}  \right )\\ 
         &\geq \sum_{k_n\in \mathcal{J}_n(t)} \left (    \int_{t_{k_n}}^{t_{k_n+1}}H(s ,x^*,D\phi(x^*))\dd s-\int_{t_{k_n}}^{t_{k_n+1}}\sup_{a\in A} \Big \{a \cdot D\phi(x^*)-\ell(s,x^*,a)\Big \}\dd s   \right) =0,
      \end{aligned}
  \end{equation*}
which implies
  \begin{equation}\label{eq: liminf consistence}
      \liminf_{n\to \infty}\left( \int_{h_n(t)}^{T}H(s,x^*,D\phi(x^*))\dd s -  \sum_{k_n\in \mathcal{J}_n(t)} \sup_{a\in A} \Big \{\Delta t_n a \cdot D\phi(x^*)- \int_{t_{k_n}}^{t_{k_n+1}}\ell(s,x^*,a)\dd s\Big \}  \right )\geq 0.
  \end{equation}
On the other hand, by {\bf(H2)}, {\bf(H3)}, and~\cite[Theorem 14.60]{RockafellarWets1998}, there exists $\alpha^* \in \A$ such that 
\begin{equation}\label{eq: consistency RockafellarWets}
\int_{h_n(t)}^{T}H(s,x^*,D\phi(x^*))\dd s=\int_{h_n(t)}^{T}\Big(\alpha^*(s)\cdot D\phi(x^*)-\ell(s,x^*,\alpha^*(s))\Big)\dd s.
\end{equation}
Since $\alpha^* \in L^2([0,T];A)$, due to \cite[Theorem 1]{Clement}, there exists a sequence of piecewise constant functions $ (\alpha_n)_{n \in \NN}\subset L^2([0,T];A)$ constant on each interval of the time-grid $[t_{k_n},t_{k_n+1})$ such that $||\alpha_n - \alpha^* ||_{L^2([0,T];A)}\to 0$ as $n \to \infty$. 
Therefore, we obtain that for every interval $[t_{k_n},t_{k_n+1})$
\begin{equation}\label{eq: cons alpha n}
    \begin{aligned}
 \int_{t_{k_n}}^{t_{k_n+1}}\Big(\alpha^*(s)\cdot D\phi(x^*)-\ell(s,x^*,\alpha^*(s))\Big)\dd s  &\leq   \int_{t_{k_n}}^{t_{k_n+1}}\alpha_n(s)\cdot D\phi(x^*)-\ell(s,x^*,\alpha_n(s))\dd s \\ 
    & \quad +    \int_{t_{k_n}}^{t_{k_n+1}}( \alpha^*(s) \cdot D\phi(x^*)- \alpha_n(s) \cdot D\phi(x^*))\dd s \\ &\quad +\int_{t_{k_n}}^{t_{k_n+1}}( \ell(s,x^*, \alpha_n(s))-\ell(s,x^*, \alpha^*(s)))\dd s \\ 
     &\leq \sup_{a \in A}  \int_{t_{k_n}}^{t_{k_n+1}}(a\cdot D\phi(x^*)-\ell(s,x^*,a))\dd s \\ 
  &\quad  +    \int_{t_{k_n}}^{t_{k_n+1}}( \alpha^*(s) \cdot D\phi(x^*)- \alpha_n(s) \cdot D\phi(x^*))\dd s \\ &\quad +\int_{t_{k_n}}^{t_{k_n+1}}( \ell(s,x^*, \alpha_n(s))-\ell(s,x^*, \alpha^*(s)))\dd s.  \\ 
    \end{aligned}
\end{equation}
Moreover using \eqref{eq: consistency RockafellarWets} together with \eqref{eq: cons alpha n}, we get 
\begin{equation}\label{eq: cons 2 density}
    \begin{aligned}
        \int_{h_n(t)}^{T}H(s,x^*,D\phi(x^*))\dd s
        &=\sum_{k_n\in \mathcal{J}_n(t)}  \int_{t_{k_n}}^{t_{k_n+1}}\Big(\alpha^*(s)\cdot D\phi(x^*)-\ell(s,x^*,\alpha^*(s))\Big)\dd s\\
         &\leq  \sum_{k_n\in \mathcal{J}_n(t)}\sup_{a \in A}  \int_{t_{k_n}}^{t_{k_n+1}}(a\cdot D\phi(x^*)-\ell(s,x^*,a))\dd s \\ 
   & \quad +   || \alpha_n-\alpha^*||_{L^2([0,T];A)}  |D\phi(x^*)|\, T\,\\ 
    &\quad +\int_{0}^{T}| \ell(s,x^*, \alpha_n(s))-\ell(s,x^*, \alpha^*(s))|\dd s, \\ 
    \end{aligned}
\end{equation}
where, since $\ell$ is continuous with respect to the third variable and \eqref{eq: bounded L} holds,  the term  
$\int_{0}^{T}| \ell(s,x^*, \alpha_n(s))-\ell(s,x^*, \alpha^*(s))|\dd s$, up to subsequences, goes to zero as $n \to \infty$. Therefore, \eqref{eq: cons 2 density} implies \begin{equation}\label{eq: limsup consistence}
      \limsup_{n\to \infty}\left( \int_{h_n(t)}^{T}H(s,x^*,D\phi(x^*))\dd s -  \sum_{k_n\in \mathcal{J}_n(t)} \sup_{a\in A} \left \{\Delta t_n a \cdot D\phi(x^*)- \int_{t_{k_n}}^{t_{k_n+1}}\ell(s,x^*,a)\dd s\right \}  \right )\leq 0.
  \end{equation}
The thesis follows by combining \eqref{eq: liminf consistence} and \eqref{eq: limsup consistence}.
\end{proof}

\begin{prop}\label{prop: properties decrete value function}
Assume \onlyref{hyp: compact set,hyp:terminal cost,hyp: lagrangian new}, let $\Delta=(\Delta t,\Delta x)\in(0,\infty)^2$ and $\mu\in \CC([0,T];\P_1(\RR^d))$.
Then the following  hold:
\begin{enumerate}[label={\rm(\roman*)},     ref={\rm(\roman*)}]
      \item \label{item: stability vDelta} {\rm(Stability)}
 There exists  $\widetilde{C}_{\text{{\rm v}}}>0$, independent of $(\mu,\Delta)$, such that
\begin{equation*}
|v_{\Delta}[\mu](t,x)|\leq \widetilde C_{\text{{\rm v}}}\quad\text{for all }(t,x)\in  [0,T]\times\RR^{d}. 
\end{equation*}
\item \label{item: Lip space vDelta} {\rm(Lipschitz property in space)}
There exists  $\widetilde{C}_{\text{{\rm Lip}}}>0$, independent of $(\mu,\Delta)$, such that
    \begin{equation*}
      \big|v_{\Delta}[\mu](t,x_1)-v_{\Delta}[\mu](t,x_2)\big|\leq \widetilde{C}_{\text{{\rm Lip}}}|x_1-x_2|\quad\text{for all } t\in[0,T], \, x_1,\,x_2\in\RR^d.
  \end{equation*}
\item \label{item: Discr Lip time vDelta} {\rm(Discrete Lipschitz property in time)}
There exists 
$\widetilde {C}_{\rm tm}>0$,  independent of $(\mu,\Delta)$, such that 
\begin{equation*}
    |v_{\Delta}[\mu](t_{k_1},x)-v_{\Delta}[\mu](t_{k_2},x)|\leq  \widetilde C_{\rm tm}|k_1-k_2|\Delta t \qquad \text{for all } k_1,k_2 \in \It, \, x\in \RR^d.
\end{equation*}
\item  \label{item: semiconv vDelta} {\rm (Discrete semiconcavity)}
There exists  $\widetilde{C}_{\text{{\rm sc}}}>0$, independent of $(\mu,\Delta)$, such that
for every $x_i\in \GG_{\Delta x}$
\begin{equation*}\label{eq: discr semiconc 1}   
v_{\Delta}[\mu](t,x+x_i)-2v_{\Delta}[\mu](t,x)+   v_{\Delta}[\mu](t,x-x_i) \leq \widetilde{C}_{\text{{\rm sc}}}|x_i|^2\quad \text{for all }(t,x)\in[0,T]\times  \RR^d.
\end{equation*} 
\end{enumerate}
\end{prop}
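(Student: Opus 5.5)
The plan is to prove all four properties first on the grid functions $v_k\in\BB(\GG_{\Delta x})$ by backward induction on $k$, using the recursion \eqref{eq:HJBdiscr}. Then transfer them to $v_\Delta[\mu]$ via properties of $I_{\Delta x}$: it is a convex combination of nodal values with nonnegative weights summing to one, and \eqref{eq: Lip interp} holds. Throughout, the constants will only depend on $T$, $C_{\rm b}$, $\ov C_{\ell,i}$, $C_{f,i}$, $C_{g,i}$ and $d$, which gives independence from $(\mu,\Delta)$.

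\emph{Stability.} Since $I_{\Delta x}[W]$ is a convex combination of the $W_j$, we have $\|I_{\Delta x}[W]\|_\infty\le \sup_j|W_j|$. From \eqref{eq:infS}, \eqref{eq: bounded L} and \eqref{eq: bound f} we get $\sup_i|v_{k,i}|\le \sup_i|v_{k+1,i}|+\Delta t(\ov C_{\ell,1}+C_{f,1})$. Induction from \eqref{eq: bound g} then yields $\widetilde C_{\rm v}=C_{g,1}+T(\ov C_{\ell,1}+C_{f,1})$.

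\emph{Lipschitz in space.} The difficulty is that $I_{\Delta x}$ does not preserve the Lipschitz constant of grid data exactly: \eqref{eq: Lip interp} loses a factor $\sqrt d$, which would blow up under iteration. To avoid this, I use translation invariance of the grid. For $i,j\in\ZZ^d$ we have $I_{\Delta x}[W](x_j-\Delta t a)=I_{\Delta x}[\tau W](x_i-\Delta t a)$ with $(\tau W)_l=W_{l+j-i}$. Using the same $a$ in both infima, we obtain
\[
v_{k,j}-v_{k,i}\le \sup_l|v_{k+1,l+j-i}-v_{k+1,l}|+\Delta t(\ov C_{\ell,2}+C_{f,2})|x_j-x_i|.
\]
This follows from \eqref{eq: lip L}, \eqref{eq: lip f} and the convex-combination property. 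By induction and \eqref{eq: lip g}, the grid Lipschitz constant is at most $L_0:=C_{g,2}+T(\ov C_{\ell,2}+C_{f,2})$. A single application of \eqref{eq: Lip interp} then gives $\widetilde C_{\rm Lip}=\sqrt d\,L_0$.

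\emph{Discrete Lipschitz in time.} It suffices to treat $|k_1-k_2|=1$ at grid points and then interpolate, since a grid bound passes to $I_{\Delta x}$ by the convex-combination property. For $k=N_T-1$, choosing $a$ arbitrary in \eqref{eq:infS} gives the upper bound. The lower bound follows from $|I_{\Delta x}[\hat g](x_i-\Delta ta)-g(x_i)|\le \sqrt d C_{g,2}C_{\rm b}\Delta t$. For general $k$, the translation argument above shows $\sup_i|v_{k,i}-v_{k+1,i}|\le \sup_i|v_{k+1,i}-v_{k+2,i}|+\Delta t\,C$. This is the step I expect to be the main obstacle. The reason is that the coupling term is evaluated at $\mu(t_k)$ versus $\mu(t_{k+1})$, and $f$ is only continuous in the measure, so the difference of the $f$-integrals is not $O(\Delta t)$ in a way that telescopes.

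To get around this, I would not compare consecutive steps. Instead, I bound $v_{k,i}-\sup_l$-shifted $v_{k+1}$ directly from one application of the scheme: with $W=v_{k+1}$, it holds that $|S_\Delta[\mu](W,t_k,x_i)-W_i|\le \Delta t(\ov C_{\ell,1}+C_{f,1})+\sup_{a}|I_{\Delta x}[W](x_i-\Delta ta)-W_i|$. The last term is $\le \sqrt d L_0 C_{\rm b}\Delta t$ by the spatial Lipschitz bound already proved. This gives $\widetilde C_{\rm tm}=\ov C_{\ell,1}+C_{f,1}+\sqrt dL_0C_{\rm b}$ per step, and the general case follows by the triangle inequality.

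\emph{Discrete semiconcavity.} For shifts $x_j$ on the grid, I argue by induction on grid values. Let $a^*$ be a minimizer for $v_{k,i}$, which exists by \cref{properties infS}\ref{item: well pos SL}. Using $a^*$ as a competitor at $i\pm j$ gives
\[
v_{k,i+j}-2v_{k,i}+v_{k,i-j}\le \sup_l\bigl(v_{k+1,l+j}-2v_{k+1,l}+v_{k+1,l-j}\bigr)+\Delta t(\ov C_{\ell,4}+C_{f,3})|x_j|^2.
\]
Here I use \eqref{eq: semiconc L} integrated in time, \eqref{eq: sconc f}, and the fact that the translated interpolants combine linearly. With \eqref{eq: sconc g} this gives the grid constant $C_{g,3}+T(\ov C_{\ell,4}+C_{f,3})$.

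Finally, for general $x$ and a grid shift $x_i$, the map $W\mapsto I_{\Delta x}[W](x)$ commutes with grid translation, and $I_{\Delta x}[W](x\pm x_i)=I_{\Delta x}[\tau_{\pm i}W](x)$. Hence the second difference of $v_\Delta[\mu]$ at $x$ is a convex combination of grid second differences. This is exactly why the statement restricts to $x_i\in\GG_{\Delta x}$.
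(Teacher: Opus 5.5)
Your argument is correct and is essentially the paper's: items (i), (ii) and (iv) are proved there by the standard backward induction on grid values, using that $\mathbb{Q}_1$ interpolation is a convex combination that commutes with grid translations (the paper cites Carlini--Silva for this). For (iii), your final one-step estimate $|v_{k,i}-v_{k+1,i}|\le \Delta t(\ov C_{\ell,1}+C_{f,1})+\sup_{a\in A}|I_{\Delta x}[v_{k+1}](x_i-\Delta t a)-v_{k+1,i}|$, combined with the spatial Lipschitz bound, is exactly the paper's proof, with the same constant $\widetilde C_{\rm tm}=\widetilde C_{\rm Lip}C_{\rm b}+\ov C_{\ell,1}+C_{f,1}$ (you were right to discard the consecutive-step comparison, which also fails because $\ell$ and $f$ are merely measurable in time).
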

\begin{proof}
 Results \onlyref{item: stability vDelta,item: Lip space vDelta,item: semiconv vDelta} are proven in the standard way as in \cite[Lemma 3.1 and Lemma 3.2]{CarliniSilva2014}.

  We now prove \ref{item: Discr Lip time vDelta}.
   Let us suppose $k_1=k$ and $k_2={k+1}$, for $k\in \It^*$. Let  $x=x_i$, for some $x_i \in \GG_{\Delta x}$, and let $a^*\in A$ be a minimizer of $S_{\Delta}[\mu](v_{k+1},t_k,x_i)$.
Thanks to the Lipschitz property  in space \ref{item: Lip space vDelta}, we have 
\begin{equation*}
\begin{aligned}
  &  |  v_{k,i}-v_{k+1,i}|\\ &\leq | v_\Delta [\mu](t_{k+1},x_{i}-a^*\Delta t)-  v_{\Delta}[\mu](t_{k+1},x_i)|+\int_{t_k}^{t_{k+1}} |\ell(t,x_i,a^*)|\dd t+\int_{t_k}^{t_{k+1}}|f(t,x_{i},\mu(t_k))|\dd t \\ 
  &\leq  \widetilde C_{\rm tm}\Delta t,
\end{aligned}
\end{equation*}
where we define $ \widetilde C_{\rm tm}\coloneqq \widetilde C_{\rm Lip}C_{\rm b}+\ov C_{\ell,1}+C_{f,1}$.
It follows from the above estimate that, for every  $x\in \mathbb{R}^d$, 
\begin{equation*}
\begin{aligned}
      |  v_{\Delta}[\mu](t_{k+1},x)-  v_{\Delta}[\mu](t_k,x)|&
      \leq \sum_{i\in \ZZ^d}|v_{k+1,i}-v_{k,i}| \beta_i^{\Delta x}(x)\leq 
    \widetilde C_{\rm tm}\Delta t.
\end{aligned}
\end{equation*}
\end{proof}

The following result is obtained by adapting the arguments used in \cite[Theorem 2.2]{Monteillet2010}, concerning the convergence for a non-local eikonal equation, and  in
\cite[Theorem 4.4]{CamilliFestaMarzufero},  proving convergence to  a linear viscous transport equation. 

 \begin{prop} \label{prop: converg discr v}
Assume 
\onlyref{hyp: compact set,hyp: lagrangian new,hyp:terminal cost}.
Let  $(\Delta _n)_{n\in \NN}=((\Delta t_n,\Delta x_n))_{n\in \NN}\subset (0,\infty)^2$, $(\mu_n)_{n\in \NN}\subset \CC([0,T];\P_1(\RR^d))$, and  $\mu\in \CC([0,T];\P_1(\RR^d))$ be  such that, as $n\to \infty$, $\Delta_n\to 0$,   $(\Delta x_n)^2/\Delta t_n\to 0$,  and $\mu_n\to \mu$ in $\CC([0,T];\P_1(\RR^d))$. Then
 $(v_{\Delta_n}[\mu_n])_{n\in \NN}$ converges to $v[\mu]$ uniformly over compact subsets of $[0,T]\times \RR^d$.
\end{prop}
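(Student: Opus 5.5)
We will use the Barles--Souganidis half-relaxed limits method, adapted to $L^1$-viscosity solutions as in \cite[Theorem 2.2]{Monteillet2010} and \cite[Theorem 4.4]{CamilliFestaMarzufero}. Set $v_n\coloneqq v_{\Delta_n}[\mu_n]$. By \cref{prop: properties decrete value function}\ref{item: stability vDelta}, the family is bounded uniformly in $n$. Hence the half-relaxed limits
\[
\ov v(t,x)\coloneqq\limsup_{n\to\infty,\,(s,y)\to(t,x)}v_n(s,y),\qquad \underline v(t,x)\coloneqq\liminf_{n\to\infty,\,(s,y)\to(t,x)}v_n(s,y)
\]
are well defined, bounded, and respectively upper and lower semicontinuous. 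By the uniform spatial Lipschitz bound \ref{item: Lip space vDelta}, both are $\widetilde C_{\rm Lip}$-Lipschitz in $x$ uniformly in $t$, so they belong to $\mathcal{UC}_x$. Once we show that $\ov v$ is an $L^1$-viscosity subsolution and $\underline v$ an $L^1$-viscosity supersolution, \cref{thm: comparison HJB} gives $\ov v\le\underline v$. Since $\underline v\le \ov v$ trivially, both equal a continuous function, which is $v[\mu]$ by \cref{prop: uniqueness HJB}. Local uniform convergence then follows by the standard argument.

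\textbf{Terminal condition.} At $t=T$ we use \ref{item: Discr Lip time vDelta} together with the spatial Lipschitz bound, which give $|v_n(s,y)-g(y,\mu_n(T))|\le \widetilde C_{\rm tm}(T-s+\Delta t_n)$. The continuity of $g$ in the measure variable and its Lipschitz continuity in $x$ then yield $\ov v(T,\cdot)\le g(\cdot,\mu(T))\le\underline v(T,\cdot)$. Here we must check that $g(x,\mu_n(T))\to g(x,\mu(T))$ locally uniformly in $x$; this follows from pointwise convergence combined with equi-Lipschitz continuity.

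\textbf{Subsolution property (the core step).} Fix $\phi$, $b$, $G$ as in \cref{def: L1viscosity}, and suppose $\ov v-\phi-\int_t^Tb$ has a strict local maximum at $(t^*,x^*)$. The difficulty is that $b$ is only $L^1$, so the test function is not smooth in time and the usual pointwise consistency is unavailable. We proceed as follows.

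\begin{enumerate}[label={\rm(\arabic*)}]
\item Replace $b$ by the discrete primitive $B_n(t_k)\coloneqq\sum_{j\ge k}\int_{t_j}^{t_{j+1}}b$. This function converges uniformly to $\int_t^Tb$.
\item Choose grid points $(t_{k_n},x_{i_n})$ that maximise $v_{n}-\phi-B_n$ over a neighbourhood; they satisfy $(t_{k_n},x_{i_n})\to(t^*,x^*)$. Maximizing over grid nodes is justified because $v_n$ is a $\mathbb Q_1$ interpolation and $\phi$ is smooth, so the error is $\mathcal O(\Delta x_n^2)$.
\item Use monotonicity and invariance under constants (\cref{properties infS}\ref{item: Monotone SL},\ref{item: invariance SL}), via the relation \eqref{eq: relation H S}. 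This gives $(\phi(t_{k},\cdot)-\phi(t_{k+1},\cdot))(x_{i_n})/\Delta t_n$ plus $H_{\Delta_n}(\hat\phi(t_{k_n+1}),t_{k_n},x_{i_n})$, up to errors, bounded by $\frac1{\Delta t_n}\int_{t_{k_n}}^{t_{k_n+1}}(f(s,x_{i_n},\mu_n(t_{k_n}))-b(s))\,ds$.
\end{enumerate}

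A single step is not enough here, since neither $b$ nor $H$ has pointwise meaning in $t$. Instead we argue by contradiction, as in \cite{CamilliFestaMarzufero}. Assume $-\partial_t\phi+G>0$ at $(t^*,x^*)$. Then this quantity is at least $\eta>0$ on a small box, by continuity of $G$ and $\phi$. Using $G\le H-f+b$ a.e., we sum the one-step inequalities over $k$ from $k_n$ up to $k_n+m_n$, where $m_n\Delta t_n\to\tau$ is small. The consistency result \cref{properties infS}\ref{item: error SL}, together with the argument of \ref{item: consistency SL}, shows that the summed numerical Hamiltonians approach $\int H(s,x^*,D\phi)\,ds$. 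We also need $\sum\int f(s,x_{i_n},\mu_n(t_k))\to\int f(s,x^*,\mu(s))$; this follows from the continuity of $f$ in $\mu$, the convergence $\mu_n\to\mu$ in $\CC([0,T];\P_1)$, uniform equicontinuity of $\mu$, dominated convergence, and \eqref{eq: lip f}.

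The resulting comparison over $[t^*,t^*+\tau]$ between the summed discrete inequality (with maximality of the grid points propagated along the discrete optimal trajectory) and $\eta\tau$ gives a contradiction. Propagating maximality along this discrete trajectory, which stays inside the box thanks to $|a|\le C_{\rm b}$, is the main obstacle. We expect to handle it by working with the discrete DPP over $m_n$ steps and applying the maximum property only at the endpoints, mirroring the proof of \cref{prop: existence HJB}.

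\textbf{Supersolution property.} This is symmetric, using minimizers and the inequality $G\ge H-f+b$.
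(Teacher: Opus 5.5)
The overall framework matches the paper: half-relaxed limits, the terminal condition, \cref{thm: comparison HJB}, and identification via \cref{prop: uniqueness HJB}. The core of the subsolution step, however, is not a proof. The one-step inequality you derive in (3) holds only at the grid maximizer $(t_{k_n},x_{i_n})$, so it cannot be summed over $k_n,\dots,k_n+m_n$. Maximality also cannot be ``propagated along the discrete optimal trajectory''; you flag this yourself and leave it as an expectation.

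Your fallback, an $m_n$-step discrete DPP using maximality only at time $t_{k_n+m_n}$, is not a routine transcription of \cref{prop: existence HJB}. Once the composite scheme operator acts on $\hat\phi(t_{k_n+m_n})$, the intermediate iterates are no longer smooth. Hence \cref{properties infS}\ref{item: error SL} cannot be iterated, and ``summed numerical Hamiltonians of $\phi$'' never actually appear. To close the argument along that route you would need the following steps, none of which you supply:
\begin{itemize}
\item freeze open-loop controls $a_j$ maximizing $\Delta t_n\,a\cdot p^*-\int_{t_j}^{t_{j+1}}\ell(s,x^*,a)\,\dd s$, with $p^*=D_x\phi(t^*,x^*)$;
\item read the $\mathbb{Q}_1$ interpolation as a random walk with exact mean increment $-\Delta t_n a_j$ and variance at most of order $m_n d(\Delta x_n)^2\approx \tau d(\Delta x_n)^2/\Delta t_n\to0$;
\item invoke the $\limsup$ half of the consistency argument, i.e.\ piecewise-constant approximation of the optimal measurable control.
\end{itemize}
As a minor point, your terminal estimate also omits the $\mathcal{O}(\Delta x_n)$ interpolation error, which is harmless.

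Your claim that a single step cannot work is also mistaken, and it is exactly where the paper's key device enters. A naive single step with $B_n$ does fail for the subsolution inequality. The reason is that $\sup_a\int_{t_k}^{t_{k+1}}(a\cdot p-\ell)\,\dd s\le\int_{t_k}^{t_{k+1}}H\,\dd s$ goes the wrong way, and the per-cell gap divided by $\Delta t_n$ need not vanish, e.g.\ on a cell containing a switching time. The paper repairs this by adding to the test function the corrector
\[
\mathcal{E}_{\Delta_n}(t)=\sum_{k=\lfloor t/\Delta t_n\rfloor}^{N_T^n-1}\Big(\Delta t_n H_{\Delta_n}(\hat\phi(t^*),t_k,x_{i_n})-\int_{t_k}^{t_{k+1}}f(s,x_{i_n},\mu_n(t_k))\,\dd s\Big)-\int_t^T\big(H(s,x^*,p^*)-f(s,x^*,\mu(s))\big)\,\dd s ,
\]
where $x_{i_n}$ is a grid point nearest to $x^*$. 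By \cref{properties infS}\ref{item: consistency SL} and dominated convergence for the $f$-terms, $\mathcal{E}_{\Delta_n}\to0$ uniformly on $[0,T)$. The paper then takes a grid maximizer of $v_{\Delta_n}[\mu_n]-\phi-\int_t^Tb+\mathcal{E}_{\Delta_n}$.

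At that maximizer, the increment $\mathcal{E}_{\Delta_n}(t_k)-\mathcal{E}_{\Delta_n}(t_{k+1})$ trades the numerical Hamiltonian and $f$-term on that single cell for $\int_{t_k}^{t_{k+1}}\big(H(s,x^*,p^*)-f(s,x^*,\mu(s))\big)\,\dd s$. What remains is a difference of two numerical Hamiltonians on the same cell, and of $f$ at the same measure, so the measurable time dependence cancels. That remainder vanishes by \cref{properties infS}\ref{item: error SL} and Lipschitz bounds. Meanwhile $\frac{1}{\Delta t_n}\int_{t_k}^{t_{k+1}}(H-f+b)\,\dd s\ge\frac{1}{\Delta t_n}\int_{t_k}^{t_{k+1}}G\,\dd s\to G(t^*,x^*,p^*)$ by continuity of $G$. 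This single-step argument with a uniformly vanishing corrector, needing no contradiction and no multi-step propagation, is the idea your proposal is missing.
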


\begin{proof}For $(t,x)\in [0,T]\times \RR^d$ we set 
\begin{equation*}
    v^*(t,x)\coloneqq \limsup_{\substack{(t^n,x^n)\to (t,x) \\ 
    n\to \infty}}v_{\Delta _n}[\mu_n](t^n,x^n), \qquad  v_*(t,x)\coloneqq \liminf_{\substack{(t^n,x^n)\to (t,x) \\ 
    n\to \infty}}v_{\Delta _n}[\mu_n](t^n,x^n). 
\end{equation*}
Since ($v_{\Delta_n}[\mu_n])_{n \in \NN}$ is equi-bounded (see \cref{prop: properties decrete value function} \ref{item: stability vDelta}), $v^*$ and $v_*$ are well defined. 
We verify that $v^*$ and $v_*$ are a $L^1-$ viscosity subsolution and a $L^1-$viscosity supersolution, respectively, to \eqref{eq:HJB}.

Let $x \in \RR^d$  and $((t^n,x^n))_{n \in \NN}\subset [0,T]\times \RR^d$, with  $(t^n,x^n)\to (T,x)$ as $n\to \infty$. There exists a sequence ($t_{k_n})_{n\in\NN} \subset \GG_{\Delta t_n}$ such that  $v_{\Delta_n}[\mu_n](t^n,x^n)=v_{\Delta_n}[\mu_n](t_{k_n},x^n)$ and  $t_{k_n} \to T$, as $n\to \infty$, which yields
\begin{equation*}
\begin{aligned}
     |v_{\Delta_n}[\mu_n](t^n,x^n)-g(x,\mu(T))|&\leq|v_{\Delta_n}[\mu_n](t^n,x^n)-v_{\Delta_n}[\mu_n](t^n,x)|\\ &\quad +|v_{\Delta_n}[\mu_n](t^n,x)-v_{\Delta_n}[\mu_n](T,x)|\\ & \quad +|v_{\Delta_n}[\mu_n](T,x)-g(x,\mu_n(T))| \\ &\quad +|g(x,\mu_n(T))-g(x,\mu(T))|,
     \end{aligned}
\end{equation*}
where the term on the right-hand side converges to zero as $n\to\infty$, by
\cref{prop: properties decrete value function}
\onlyref{item: Lip space vDelta,item: Discr Lip time vDelta},
\eqref{interperr}, and~\ref{hyp:terminal cost}.
This implies that $v_{\Delta_n}[\mu_n](t^n,x^n)\to g(x,\mu(T))$ and, hence, $$v^*(T,x)=v_*(T,x)=g(x,\mu(T)).$$
Let $(t^*,x^*)\in (0,T)\times \RR^d$, $\phi \in \CC^{\infty}_c((0,T)\times \RR^d)$, $b \in L^1([0,T])$, and $G\in \CC([0,T]\times \RR^d\times \RR^d)$   as in \cref{def: L1viscosity}   for the subsolution property. We now establish  the subsolution inequality in \eqref{eq: L1visc}.
For every $t\in [0,T)$, we define
\begin{equation*}
\begin{aligned}
    \mathcal{E}_{\Delta_n}(t)&\coloneqq \sum_{k_n=\left\lfloor \frac{ t}{\Delta t_n }\right\rfloor}^{N^n_T-1}\left (\Delta t_n H_{\Delta_n}(\hat \phi(t^*),t_{k_n},x_{i_n})-\int_{t_{k_n}}^{t_{k_n+1}}f(s,x_{i_n},\mu_n(t_{k_n}))\dd s\right)\\ &-\int_t^T\Big (H(s,x^*,D_x \phi(t^*,x^*))-f(s,x^*,\mu(s))\Big) \dd s,
\end{aligned}
\end{equation*}
where $N_T^n\Delta t_n=T$, and $x_{i_n}\in \arg\min\{ |x-x^*| \, \big | \, x \in  \GG_{\Delta x_n}\}$.  We define $h_n(s)\coloneqq t_{\left\lfloor\frac{s}{\Delta t_n}\right\rfloor}$ for every $s \in [0,T]$.
 The term
\begin{equation*}
    \begin{aligned}
        &\left | \int_{t}^{T}f(s,x^*,\mu(s))\dd s -\sum_{k_n=\left\lfloor \frac{ t}{\Delta t_n }\right\rfloor}^{N^n_T-1}\int_{t_{k_n}}^{t_{k_n+1}}f(s,x_{i_n},\mu_n(t_{k_n})\Big )\dd s \right | \\ 
        &=\left | \int_{t}^{T}f(s,x^*,\mu(s))\dd s - \int_{h_n(t)}^{T}f(s,x_{i_n},\mu_n(h_n(s)) )\dd s \right |\\ 
        &\leq  \int_{0}^{T}\Big | f(s,x^*,\mu(s) )\chi_{[t,T]}(s)-f(s,x_{i_n},\mu_n(h_n(s)) )\chi_{\left[t_{\left \lfloor t/\Delta t_n\right\rfloor},T\right]}(s) \Big |\dd s 
    \end{aligned}
\end{equation*}
goes to zero, as $n\to \infty$, thanks to the dominated convergence  theorem, since \eqref{eq: bound f} and \eqref{eq: lip f} hold,  $x_{i_n}\to x^*$, $\sup_{s\in [0,T]}|h_n(s)-s|= \Delta t_n \to 0$ and $\sup_{t\in [0,T]}\dd _1(\mu_n(t),\mu(t))\to 0$.  Combining this with the consistency property shown in \cref{properties infS} \ref{item: consistency SL}, we have that $\mathcal{E}_{\Delta_n}\to 0$ uniformly on $[0,T)$, as $n\to \infty$.
We define $  w_{\Delta_n}\colon[0,T)\times \RR^d \to \RR$ as
\begin{equation*}
    w_{\Delta_n}(t,x)\coloneqq v_{\Delta_n}[\mu_n](t,x)-\phi(t,x)-\int_t^Tb(s)\dd s+\mathcal{E}_{\Delta_n}(t). 
\end{equation*}
By standard argument  (see, e.g., \cite[Proposition 2.8]{smears2018hamilton}), there exists  a  subsequence, still denoted by $(\Delta_n)_{n \in \NN}=((\Delta t_n,\Delta x_n))_{n \in \NN}$, and  a sequence $((t_{k_n^*},x_{i_n^*}))_{n \in \NN} \subset \mathcal G_{\Delta_n}^*$  such that $(t_{k_n^*+1})_{n \in \NN}\subset \GG_{\Delta t_n}^*$,   $(t_{k_n^*},x_{i_n^*})\to (t^*,x^*)$, as $n\to \infty$,  and
\begin{equation}\label{eq: max 1}
    \begin{aligned}
        w_{\Delta_n}(t_q,x_j)\leq   w_{\Delta_n}(t_{k_n^*},x_{i_n^*})\qquad \text{for all } (t_q,x_j)\in  \GG_{\Delta_n}. 
    \end{aligned}
\end{equation}
 If we choose $t_q=t_{k_n^*+1}$  in \eqref{eq: max 1}, we have that  for every $x_j$
\begin{equation}\label{eq:dis step 5}
    v_{\Delta_n}[\mu_n](t_{k_n^*+1},x_j)\leq  \phi(t_{k_n^*+1},x_j)+\int_{t_{k_n^*+1}}^Tb(s)\dd s-\mathcal{E}_{\Delta_n}(t_{k_n^*+1}) + w_{\Delta_n}(t_{k_n^*},x_{i_n^*}). 
\end{equation}
Combining  \cref {properties infS} \ref{item: Monotone SL} with \eqref{eq:dis step 5}, and then using the identity
$  S_{\Delta_n}[\mu_n]( v_{k_n^*+1},t_{k_n^*}, x_{i_n^*})=v_{\Delta_n}[\mu_n](t_{k_n^*},x_{i_n^*})$,   \cref {properties infS} \ref{item: invariance SL}, \eqref{eq: relation H S} and the definition of  $w_{\Delta_n}(t_{k_n^*+1},x_{i_n^*})$, we obtain 
\begin{equation*}
    \begin{aligned}
        v_{\Delta_n}[\mu_n](t_{k_n^*},x_{i_n^*})\leq  & \, \phi(t_{k_n^*+1},x_{i_n^*})-\phi(t_{k_n^*},x_{i_n^*})-\Delta t_nH_{\Delta_n}(\hat \phi(t_{k_n^*+1}),t_{k_n^*},x_{i_n^*})+v_{\Delta_n}[\mu_n](t_{k_n^*},x_{i_n^*})\\ &+\int_{t_{k_n^*}}^{t_{k_n^*+1}}f(s,x_{i_n^*},\mu_n(t_{k_n^*}))\dd s -\int_{t_{k_n^*}}^{t_{k_n^*+1}}b(s)\dd s+\mathcal{E}_{\Delta_n}(t_{k_n^*})-\mathcal{E}_{\Delta_n}(t_{k_n^*+1}),  \\
    \end{aligned}
\end{equation*}
which, by definition of $\mathcal{E}_{\Delta_n}$, yields  
\begin{equation*}
    \begin{aligned}
     & \phi(t_{k_n^*},x_{i_n^*})-\phi(t_{k_n^*+1},x_{i_n^*}) +\Delta t_nH_{\Delta_n}(\hat \phi(t_{k_n^*+1}),  t_{k_n^*},x_{i_n^*}) \\ &\leq \int_{t_{k_n^*}}^{t_{k_n^*+1}}f(s,x_{i_n^*},\mu_n(t_{k_n^*}))\dd s-\int_{t_{k_n^*}}^{t_{k_n^*+1}}b(s)\dd s+\Delta t_nH_{\Delta_n}(\hat \phi(t^*),t_{k_n^*},x_{i_n})\\ &\quad -\int_{t_{k_n^*}}^{t_{k_n^*+1}}f(s,x_{i_n},\mu_n(t_{k_n^*}))\dd s-\int_{t_{k_n^*}}^{t_{k_n^*+1}}\Big (H(s,x^*,D_x \phi(t^*,x^*))-f(s,x^*,\mu(s))\Big )\dd s.\\ 
    \end{aligned}
\end{equation*}
Upon rearranging and dividing by $\Delta t_n$, we get
\begin{equation*}
    \begin{aligned}
     & \frac{\phi(t_{k_n^*},x_{i_n^*})-\phi(t_{k_n^*+1},x_{i_n^*})}{\Delta t_n} + \frac{1}{\Delta t_n}\int_{t_{k_n^*}}^{t_{k_n^*+1}}(H(s,x^*,D_x \phi(t^*,x^*))-f(s,x^*,\mu(s))+b(s)\Big )\dd s  \\ &  \leq H_{\Delta_n}(\hat \phi(t^*),t_{k_n^*},x_{i_n})- H_{\Delta_n}(\hat  \phi(t_{k_n^*+1}),t_{k_n^*},x_{i_n^*})\\ &\quad +\frac{1}{\Delta t_n}\int_{t_{k_n^*}}^{t_{k_n^*+1}} (f(s,x_{i_n^*},\mu_n(t_{k_n^*}))-f(s,x_{i_n},\mu_n(t_{k_n^*})) )\dd s.
    \end{aligned}
\end{equation*}
By assumption, we can estimate
\begin{equation*}
    \begin{aligned}
         &   \int_{t_{k_n^*}}^{t_{k_n^*+1}}G(s,x^*,D_x\phi(t^*,x^*)\dd s
        \leq \int_{t_{k_n^*}}^{t_{k_n^*+1}}(H(s,x^*,D_x \phi(t^*,x^*))-f(s,x^*,\mu(s))+b(s)\Big )\dd s
    \end{aligned}
\end{equation*}
to obtain that 
\begin{equation*}
    \begin{aligned}
     & \frac{\phi(t_{k_n^*},x_{i_n^*})-\phi(t_{k_n^*+1},x_{i_n^*})}{\Delta t_n} + \frac{1}{\Delta t_n}\int_{t_{k_n^*}}^{t_{k_n^*+1}}G(s,x^*,D_x\phi(t^*,x^*)\dd s   \\ & \leq H_{\Delta_n}(\hat \phi(t^*),t_{k_n^*},x_{i_n})- H_{\Delta_n}(\hat  \phi(t_{k_n^*+1}),t_{k_n^*},x_{i_n^*})\\ &\quad +\frac{1}{\Delta t_n}\int_{t_{k_n^*}}^{t_{k_n^*+1}} (f(s,x_{i_n^*},\mu_n(t_{k_n^*}))-f(s,x_{i_n},\mu_n(t_{k_n^*})) )\dd s.
    \end{aligned}
\end{equation*}
By \eqref{eq: lip f}, it follows that
\begin{equation*}
    \begin{aligned}
        \left|\frac{1}{\Delta t_n}\int_{t_{k_n^*}}^{t_{k_n^*+1}} (f(s,x_{i_n^*},\mu_n(t_{k_n^*}))-f(s,x_{i_n},\mu_n(t_{k_n^*})) )\dd s\right |\leq C_{f,2}|x_{i_n^*}-x_{i_n}|
    \end{aligned}
\end{equation*}
and, due to the one-step  error (\cref{properties infS} \ref{item: error SL}),  we get\\
\begin{equation*}
    \begin{aligned}
        \Big | H_{\Delta_n}(\hat \phi(t^*),t_{k_n^*},x_{i_n})&- H_{\Delta_n}(\hat  \phi(t_{k_n^*+1}),t_{k_n^*},x_{i_n^*})\Big |  \leq \mathcal{O}\left(\frac{(\Delta x_n)^2}{\Delta t_n}+\Delta t_n\right) \\ 
        &+ \left | \sup_{a\in A}\left\{a \cdot D_x\phi(t^*,x_{i_n^*})-\frac{1}{\Delta t_n} \int_{t_{k_n}}^{t_{k_n+1}}\ell(s,x_{i_n^*},a)\dd s \right\} \right.\\ & \left. -\sup_{a\in A}\left\{ a \cdot D_x\phi(t_{k_n+1},x_{i_n})-\frac{1}{\Delta t_n} \int_{t_{k_n}}^{t_{k_n+1}}\ell(s,x_{i_n},a)\dd s \right\}   \right| \\  
         &\leq  \mathcal{O}\left(\frac{(\Delta x_n)^2}{\Delta t_n}+\Delta t_n\right)+ \ov C_{\ell,2} |x_{i^*_n}-x_{i_n}|\\ &+ C_{\rm b}C_{\phi }|x_{i^*_n}-x_{i_n}|+ C_{\rm b}C_{\phi }|t_{k_n+1}-t^*|,\\
   \end{aligned}
\end{equation*}
where $C_{\phi}\coloneqq ||\phi||_{\CC^2((0,T)\times\RR^d)}$.
Finally,  passing to the limit as $n\to \infty$,
 we have that $v^*$ is a $L^1-$viscosity subsolution  to \eqref{eq:HJB}.
Analogously, we can obtain that $v_*$ is a $L^1$-viscosity supersolution.

Finally, by definition, $v^*\geq v_*$ and, by the comparison principle \cref{thm: comparison HJB}, $v^*\leq v_*$. In conclusion, by \cref{prop: uniqueness HJB}, $v^*=v_*=v[\mu]$, and the convergence over compact subsets holds thanks to \cite[Lemma~V.1.9]{BardiDolcetta}.
\end{proof}

Given $\epsilon>0$, consider the mollifier $\RR^d\ni x\mapsto \rho_{\epsilon}(x)=\rho(x/\epsilon)/\epsilon^d\in\RR$, where $\rho\in \CC^{\infty}(\RR^d)$ has bounded derivatives of any order and satisfies $\rho \geq 0$, $\int_{\RR^d}\rho(x) \dd x=1$, $\int_{\RR^d}|x|\rho(x)\dd x<\infty$,  and for any $m \in \NN^+$,  $\int_{\RR^d}|x|D^{m}\rho(x)\dd x<\infty$. Given $\varphi\in {\rm Lip}(\RR^{d})$, a standard computation shows that 
\begin{equation*}\label{eq:infty_norm_difference_eps}
\sup_{x\in\RR^{d}}\big|(\rho_{\epsilon}*\varphi)(x)-\varphi(x)\big|\leq\epsilon \|D\varphi\|_{L^{\infty}(\RR^{d})}\int_{\RR^d}|x|\rho(x)\dd x,
\end{equation*}
\begin{equation}\label{eq:infty_nabla_norm_difference_eps}
\sup_{x\in\RR^{d}}\big\|D^{m}(\rho_{\epsilon}*\varphi)(x)\big\|\leq \widetilde c_{m}  \epsilon^{1-m}\quad\text{for all }m\in\mathbb{N}, 
\end{equation}
where $\big\|D^{m}(\rho_{\epsilon}*\varphi)(x)\big\|$ denotes the operator norm of $D^{m}(\rho_{\epsilon}*\varphi)(x)$ and $\widetilde c_{m}>0$ depends  on $\int_{\RR^d}|x|D^{m}\rho(x)\dd x$ and $\|D\varphi\|_{L^{\infty}(\RR^{d})}$. Let us  define 
\begin{equation}\label{eq:v_epsilon}
v_{\Delta,\epsilon}[\mu](t,\cdot)=\rho_{\epsilon}*v_{\Delta }[\mu](t,\cdot)
\quad\text{for all }t\in[0,T].
\end{equation}

\begin{prop}\label{prop: properties decrete value function eps}
Assume \onlyref{hyp: compact set,hyp:terminal cost,hyp: lagrangian new},  let $(\Delta,\eps)=(\Delta t,\Delta x,\eps)\in(0,\infty)^3$ and  $\mu\in \CC([0,T];\P_1(\RR^d))$.
Then the following hold:
\begin{enumerate}[label={\rm(\roman*)},     ref={\rm(\roman*)}]
      \item \label{item: stability vDeltaEps} {\rm(Stability)}
Let $\widetilde{C}_{\text{{\rm v}}}>0$ be as in \cref{prop: properties decrete value function} \ref{item: stability vDelta}. Then
\begin{equation*}
|v_{\Delta,\eps}[\mu](t,x)|\leq \widetilde C_{\text{{\rm v}}}\quad\text{for all }(t,x)\in  [0,T]\times\RR^{d}.
\end{equation*}
\item \label{item: Lip space vDeltaEsp} {\rm(Lipschitz property in space)}
 Let $\widetilde{C}_{\text{{\rm Lip}}}>0$ be as in \cref{prop: properties decrete value function} \ref{item: Lip space vDelta}. Then
  \begin{equation*}
      \big|v_{\Delta,\eps}[\mu](t,x_1)-v_{\Delta,\eps}[\mu](t,x_2)\big|\leq \widetilde{C}_{\text{{\rm Lip}}}|x_1-x_2|\quad\text{for all } t\in[0,T], \, x_1,\,x_2\in\RR^d.
  \end{equation*}
  \item  \label{item: Discr Lip time vDeltaEps} {\rm(Discrete  Lipschitz property in time)}
Let $\widetilde {C}_{\rm tm}>0$ be as in \cref{prop: properties decrete value function} \ref{item: Discr Lip time vDelta}. Then,
\begin{equation*}
    |v_{\Delta,\eps}[\mu](t_{k_1},x)-v_{\Delta,\eps}[\mu](t_{k_2},x)|\leq \widetilde C_{\rm tm}|k_1-k_2|\Delta t \qquad \text{for all }k_1,\, k_2  \in \It, \  x\in \RR^d.
\end{equation*}
\item  \label{item: semiconc vDeltaEps} {\rm (Approximate semiconcavity)}   There exist $\widetilde{C}_{\rm asc},\overline{C}_{\rm asc}>0$  independent of $(\mu,\Delta ,\eps)$, such that
\begin{align}
 v_{\Delta,\eps}[\mu](t,x+y)-2v_{\Delta,\eps}[\mu](t,x)+   v_{\Delta,\eps}[\mu](t,x-y) 
        &\leq \widetilde{C}_{{\rm asc}}\Big (|y|^2 +(\Delta x)^2 +\frac{(\Delta x)^2}{\eps}\Big), \label{eq: approx semiconc 1}\\
  \big ( D_xv_{\Delta,\eps}[\mu](t,x)-D_xv_{\Delta,\eps}[\mu](t,y)\big )\cdot  (x-y)  & \leq  \overline{C}_{{\rm asc}}\Big (|y-x|^2 +\frac{(\Delta x)^2}{\eps^2}\Big),\label{eq: approx semiconc 2}
     \end{align}
 for all $t\in[0,T],\, x,\, y\in\RR^d$.
\item \label{item: gradient vDeltaEps}{\rm (Gradient property)} If $\eps \in (0,1)$, there exists $\widetilde{C}_{{\rm hb}}>0$  independent of $(\mu,\Delta,\eps)$ such that
\begin{equation*}\label{eq: Gradient property 1}
 D^2_{xx} v_{\Delta,\epsilon}[\mu](t,x)y\cdot  y \leq \widetilde{C}_{\text{{\rm hb}}}\Bigg(1+\frac{(\Delta x)^{2}}{\epsilon^{4}}\Bigg)|y|^2\quad\text{for all }t\in[0,T],\,x,\,y\in\RR^{d}.
\end{equation*}
 
\end{enumerate}
\end{prop}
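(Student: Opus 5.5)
Items (i)--(iii) are inherited from \cref{prop: properties decrete value function} \ref{item: stability vDelta}--\ref{item: Discr Lip time vDelta}, because convolution with the probability density $\rho_\eps$ does not increase sup-norms, Lipschitz constants or pointwise time differences. Indeed,
\begin{equation*}
|v_{\Delta,\eps}[\mu](t_{k_1},x)-v_{\Delta,\eps}[\mu](t_{k_2},x)|\leq \int_{\RR^d}\rho_\eps(z)\,|v_\Delta[\mu](t_{k_1},x-z)-v_\Delta[\mu](t_{k_2},x-z)|\dd z\leq \widetilde C_{\rm tm}|k_1-k_2|\Delta t,
\end{equation*}
and the same one-line argument gives (i) and (ii).

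For \eqref{eq: approx semiconc 1} I reduce to the discrete semiconcavity of \cref{prop: properties decrete value function} \ref{item: semiconv vDelta}. For each $y\in\RR^d$, I pick $x_j\in\GG_{\Delta x}$ with $|y-x_j|_\infty\leq \Delta x/2$ and write $y=x_j+e$. Then
\begin{equation*}
\begin{aligned}
v_{\Delta,\eps}(t,x+y)-2v_{\Delta,\eps}(t,x)+v_{\Delta,\eps}(t,x-y)
={}&\bigl[v_{\Delta,\eps}(t,x+x_j)-2v_{\Delta,\eps}(t,x)+v_{\Delta,\eps}(t,x-x_j)\bigr]\\
&+\bigl[v_{\Delta,\eps}(t,x+x_j+e)-v_{\Delta,\eps}(t,x+x_j)\bigr]\\
&+\bigl[v_{\Delta,\eps}(t,x-x_j-e)-v_{\Delta,\eps}(t,x-x_j)\bigr].
\end{aligned}
\end{equation*}
The first bracket is the convolution of the discrete second difference, so it is bounded by $\widetilde C_{\rm sc}|x_j|^2\leq 2\widetilde C_{\rm sc}(|y|^2+d(\Delta x)^2)$. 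In the last two brackets I Taylor-expand around $x\pm x_j$. The first-order terms are $D v_{\Delta,\eps}(x+x_j)\cdot e-Dv_{\Delta,\eps}(x-x_j)\cdot e$. The second-order terms are bounded by $\widetilde c_2\eps^{-1}|e|^2=\mathcal O((\Delta x)^2/\eps)$ using \eqref{eq:infty_nabla_norm_difference_eps}.

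The first-order difference is the delicate part, since a naive bound gives only $\mathcal O(\Delta x)$. I plan to avoid it by a symmetrization. I apply the same decomposition with $x$ replaced by $x+e$ and by $x-e$, or equivalently I choose the grid representative separately for $x+y$ and $x-y$. Alternatively, I average over $e$ and $-e$, which cancels the linear terms because the second difference is symmetric in $y\mapsto -y$.

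If this cancellation fails, the fallback is the following. I use $v_{\Delta,\eps}=\rho_\eps*I_{\Delta x}[v_k]$ together with the fact that on each cell $I_{\Delta x}$ is multilinear, so its pure second derivatives vanish. Inside cells, the second differences of $I_{\Delta x}[v_k]$ are controlled by the grid values up to $\mathcal O((\Delta x)^2)$ times the Hessian bound of $\rho_\eps$, which again yields $(\Delta x)^2+(\Delta x)^2/\eps$. This step is where I expect the main obstacle.

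The remaining items then follow.
\begin{itemize}
\item[(a)] For \eqref{eq: approx semiconc 2}, set $w=v_{\Delta,\eps}-\frac{\widetilde C_{\rm asc}}{2}|\cdot|^2$. Then \eqref{eq: approx semiconc 1} says $w$ is ``concave up to $\delta$'', with $\delta=\widetilde C_{\rm asc}((\Delta x)^2+(\Delta x)^2/\eps)$. I write $(Dv(x)-Dv(y))\cdot(x-y)=\int_0^1 D^2v(y+s(x-y))(x-y)\cdot(x-y)\dd s$. To avoid needing Hessian bounds, I instead use the approximate-midpoint inequality with step $h=|x-y|$ together with the second-order Taylor remainder from $\|D^2v_{\Delta,\eps}\|\leq \widetilde c_2/\eps$. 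Optimizing over the step produces the $(\Delta x)^2/\eps^2$ term.
\item[(b)] For (v), I take $\eps<1$ and apply \eqref{eq: approx semiconc 1} with $y$ replaced by $sy$. I divide by $s^2$ and let $s\to 0$. The additive error $\delta/s^2$ blows up, so instead I choose $s=\eps$ and bound the Taylor remainder by $\|D^3 v_{\Delta,\eps}\|\,s^3\leq\widetilde c_3\eps^{-2}s^3$. This gives
\begin{equation*}
D^2v_{\Delta,\eps}(x)y\cdot y\leq \widetilde C_{\rm asc}|y|^2+\widetilde C_{\rm asc}\,\frac{(\Delta x)^2(1+\eps^{-1})}{\eps^{2}}|y|^2+\widetilde c_3|y|^2,
\end{equation*}
which yields $1+(\Delta x)^2/\eps^4$ after using $\eps<1$, consistent with the claimed bound.
\end{itemize}
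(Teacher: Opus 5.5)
Your items (i)--(iii) are fine, and your overall structure matches the paper's. The paper obtains \eqref{eq: approx semiconc 1} from discrete semiconcavity, then deduces \eqref{eq: approx semiconc 2} and (v) from it, citing the literature for each step. However, your argument has a genuine gap at the crux, \eqref{eq: approx semiconc 1}, and both derived items have problems as written.

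\textbf{Gap in \eqref{eq: approx semiconc 1}.} Your decomposition leaves the linear term $\bigl(D_xv_{\Delta,\eps}(t,x+x_j)-D_xv_{\Delta,\eps}(t,x-x_j)\bigr)\cdot e$ uncontrolled, and none of your proposed fixes removes it.
\begin{itemize}
\item Replacing $y$ by $-y$ turns $(x_j,e)$ into $(-x_j,-e)$, which leaves this term unchanged, so averaging over $\pm e$ cancels nothing.
\item Recentring at $x\pm e$ changes the quantity being estimated.
\item Choosing separate grid representatives for $x+y$ and $x-y$ destroys the symmetric configuration $x\pm x_j$ that discrete semiconcavity needs.
\item Semiconcavity only gives one-sided control of $D_xv_{\Delta,\eps}(t,x+x_j)-D_xv_{\Delta,\eps}(t,x-x_j)$ in the direction $x_j$, not in the direction $e$.
\item The fallback is only a heuristic. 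Pure second derivatives of $\mathbb{Q}_1$ functions vanish, but mixed ones do not, and second differences of $I_{\Delta x}[v_k]$ with non-grid increments are exactly what discrete semiconcavity does not control.
\end{itemize}

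The missing idea is to interpolate in the increment variable. Fix $t,x$ and set $g(y)\coloneqq v_{\Delta,\eps}[\mu](t,x+y)-2v_{\Delta,\eps}[\mu](t,x)+v_{\Delta,\eps}[\mu](t,x-y)$.
\begin{itemize}
\item Convolving \cref{prop: properties decrete value function}\ref{item: semiconv vDelta} with $\rho_\eps$ gives $g(x_j)\le\widetilde C_{\rm sc}|x_j|^2$ at every node.
\item \eqref{eq:infty_nabla_norm_difference_eps} gives $\|D^2g\|\le 2\widetilde c_2/\eps$, with $\widetilde c_2$ depending only on $\widetilde C_{\rm Lip}$.
\item The weights satisfy $\beta^{\Delta x}_j\ge0$, $\sum_j\beta^{\Delta x}_j(y)=1$ and $\sum_j\beta^{\Delta x}_j(y)x_j=y$, and only vertices with $|x_j-y|\le\sqrt d\,\Delta x$ contribute.
\end{itemize}
Taylor-expand $g(x_j)$ at $y$ and average with weights $\beta^{\Delta x}_j(y)$. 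This kills the first-order terms; it is the averaging that actually works, over the $2^d$ vertices of the cell containing $y$. Using also $\sum_j\beta^{\Delta x}_j(y)|x_j|^2=|y|^2+\sum_j\beta^{\Delta x}_j(y)|x_j-y|^2$, you get
\begin{equation*}
g(y)\le\sum_j\beta^{\Delta x}_j(y)\,g(x_j)+\frac{d\,\widetilde c_2(\Delta x)^2}{\eps}\le\widetilde C_{\rm sc}\bigl(|y|^2+d(\Delta x)^2\bigr)+\frac{d\,\widetilde c_2(\Delta x)^2}{\eps},
\end{equation*}
which is exactly \eqref{eq: approx semiconc 1}.

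\textbf{Problems in the derived items.} In (b), normalize $|y|=1$. With $s=\eps$ and a third-order remainder, the remainder divided by $s^2$ is $\frac{s}{3}\widetilde c_3\eps^{-2}=\widetilde c_3/(3\eps)$, not $\widetilde c_3|y|^2$, so your bound blows up as $\eps\to0$. Balancing $\delta/s^2$ against $s/\eps^2$ cannot give the stated rate in general. The fix is to use that odd-order terms cancel in the symmetric difference. The remainder is then at most $\frac{s^4}{12}\widetilde c_4\eps^{-3}$, and $s=\eps^{3/2}$ gives $D^2_{xx}v_{\Delta,\eps}\,y\cdot y\le\widetilde C_{\rm asc}\bigl(1+((\Delta x)^2+(\Delta x)^2/\eps)\eps^{-3}\bigr)+\widetilde c_4/12$. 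This is the claim for $\eps<1$.

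In (a), a single midpoint step of size $|x-y|$ leaves a Taylor error of order $|x-y|^2/\eps$, and there is nothing to optimize. You need to iterate the approximate midpoint inequality dyadically. Set $w=v_{\Delta,\eps}-\frac{\widetilde C_{\rm asc}}{2}|\cdot|^2$ and $\delta=\widetilde C_{\rm asc}((\Delta x)^2+(\Delta x)^2/\eps)$. Then
\begin{equation*}
w(x)-w(y)-Dw(y)\cdot(x-y)\le 2^n\delta+2^{-n-1}\|D^2w\|_\infty|x-y|^2 .
\end{equation*}
Choosing $2^{-n}|x-y|\approx\sqrt{\eps\delta}$ gives an error $\lesssim|x-y|\,\Delta x/\eps$ for $\eps<1$. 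Symmetrizing in $(x,y)$ and applying Young's inequality yields \eqref{eq: approx semiconc 2}. For $\eps\ge1$ it follows directly from $\|D^2v_{\Delta,\eps}\|\le\widetilde c_2$.
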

\begin{proof}
Properties \onlyref{item: stability vDeltaEps,item: Lip space vDeltaEsp,item: Discr Lip time vDeltaEps} follow directly from \cref{prop: properties decrete value function} \onlyref{item: stability vDelta,item: Lip space vDelta,item: Discr Lip time vDelta}, respectively. Regarding the approximate semiconcavity \ref{item: semiconc vDeltaEps}, the first inequality  \eqref{eq: approx semiconc 1} is proved by the same technique as in \cite[Lemma 4.2]{AchdouCamilliCorrias}  exploiting \cref{prop: properties decrete value function} \ref{item: semiconv vDelta}, while the second inequality  \eqref{eq: approx semiconc 2} follows from \eqref{eq: approx semiconc 1} as in \cite[Remark 6]{AchdouCamilliCorrias}. Finally, assertion \ref{item: gradient vDeltaEps} follows from \eqref{eq: approx semiconc 1} and is proved as in \cite[Proposition~3.5]{CarliniSilvaZorkot}.
  
\end{proof}

We point out that \cref{prop: properties decrete value function eps}\ref{item: gradient vDeltaEps}  will  be used in the next section to establish the stability of the scheme for the continuity equation, which plays a key role in the convergence of the full scheme for the MFG system. 

\begin{prop} \label{prop: converg discr v eps}
Assume 
\onlyref{hyp: compact set,hyp: lagrangian new,hyp:terminal cost}.
Let  $((\Delta _n,\eps_n))_{n \in \NN}=((\Delta t_n, \Delta x_n,\eps_n))_{n\in \NN}\subset (0,\infty)^3$, $(\mu_n)_{n\in \NN}\subset\CC([0,T];\P_1(\RR^d))$, and  $\mu\in \CC([0,T];\P_1(\RR^d))$ be such that, as $n\to \infty$,  $(\Delta_n,\eps_n)\to 0$,   $(\Delta x_n)^2/\Delta t_n\to 0$, and $\mu_n\to \mu$ in $\CC([0,T];\P_1(\RR^d))$. Then the  following  hold.
\begin{enumerate}[label={\rm(\roman*)},     ref={\rm(\roman*)}]
      \item  \label{item: conv veps}$(v_{\Delta_n,\eps_n}[\mu_n])_{n \in \NN}$ converges to $v[\mu]$ uniformly over compact subsets of $[0,T]\times \RR^d$.
      \item \label{item: conv drift} If in addition $\Delta x_n/\eps_n \to 0$, then for every $K\subset [0,T]\times \RR^d$ compact and $q \in [1,+\infty)$, 
      \begin{equation*}
          \mathbb{I}_K(\cdot,\cdot)D_pH(\cdot,\cdot,D_xv_{\Delta_n,\eps_n}[\mu_n](\cdot,\cdot)) \to \mathbb{I}_K(\cdot,\cdot)D_pH(\cdot,\cdot,D_xv[\mu](\cdot,\cdot)) \quad \text{ in }L^q([0,T]\times \RR^d).
      \end{equation*}
\end{enumerate}
\end{prop}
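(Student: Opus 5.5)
For \ref{item: conv veps}, I would combine \cref{prop: converg discr v} with the mollification estimate. Fix a compact set $K\subset[0,T]\times\RR^d$. For $(t,x)\in K$ I split
\begin{equation*}
|v_{\Delta_n,\eps_n}[\mu_n](t,x)-v[\mu](t,x)|\leq |(\rho_{\eps_n}*v_{\Delta_n}[\mu_n](t,\cdot))(x)-v_{\Delta_n}[\mu_n](t,x)|+|v_{\Delta_n}[\mu_n](t,x)-v[\mu](t,x)|.
\end{equation*}
Since $v_{\Delta_n}[\mu_n](t,\cdot)$ is $\widetilde C_{\rm Lip}$-Lipschitz uniformly in $(n,t)$ by \cref{prop: properties decrete value function}\ref{item: Lip space vDelta}, the first term is bounded by $\eps_n\widetilde C_{\rm Lip}\int|x|\rho(x)\dd x\to0$, uniformly on $[0,T]\times\RR^d$. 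The second term tends to zero uniformly on $K$ by \cref{prop: converg discr v}.

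For \ref{item: conv drift}, the central step is a.e. convergence of the gradients $D_xv_{\Delta_n,\eps_n}[\mu_n](t,x)\to D_xv[\mu](t,x)$. Once this is available, I would conclude with dominated convergence. By \cref{rem: properties H}\ref{item: Lip DpH} (applied for a.e. $t$), we have
\begin{equation*}
|D_pH(t,x,p_n)-D_pH(t,x,p)|\leq c_H|p_n-p|\to0,
\end{equation*}
so the integrand converges a.e. It is also bounded by $C_{\rm b}\mathbb{I}_K$ by \cref{rem: properties H}\ref{item: bound DpH}. This gives convergence in $L^q$ for every $q<\infty$.

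To get the gradient convergence, I fix $t$ and use the standard semiconcavity argument. Let $w_n:=v_{\Delta_n,\eps_n}[\mu_n](t,\cdot)$. Integrating \eqref{eq: approx semiconc 2} along the segment from $x$ to $y$ (or using \eqref{eq: approx semiconc 1} with the convexity trick), I obtain the one-sided Taylor bound
\begin{equation*}
w_n(y)\leq w_n(x)+D w_n(x)\cdot(y-x)+\overline C_{\rm asc}\Bigl(|y-x|^2+\tfrac{(\Delta x_n)^2}{\eps_n^2}\Bigr).
\end{equation*}
The gradients $Dw_n(x)$ are bounded by $\widetilde C_{\rm Lip}$ by \cref{prop: properties decrete value function eps}\ref{item: Lip space vDeltaEsp}. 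I take a point $x$ where $v[\mu](t,\cdot)$ is differentiable, which holds for a.e. $x$ by Rademacher, and a subsequence along which $Dw_n(x)\to\bar p$. Passing to the limit using part \ref{item: conv veps} and $\Delta x_n/\eps_n\to0$ gives
\begin{equation*}
v[\mu](t,y)\leq v[\mu](t,x)+\bar p\cdot(y-x)+\overline C_{\rm asc}|y-x|^2\quad\text{for all }y.
\end{equation*}
Hence $\bar p$ belongs to the superdifferential of $v[\mu](t,\cdot)$ at $x$, which equals $\{D_xv[\mu](t,x)\}$ there. Since every subsequential limit is identified, the whole sequence converges.

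The set of $(t,x)$ at which $v[\mu](t,\cdot)$ is differentiable in $x$ has full measure in $[0,T]\times\RR^d$, by Rademacher and Fubini (or by \cref{prop: properties HJB}\ref{item: Lip v}). This yields the required a.e. convergence. The main obstacle is exactly this gradient identification step: the error term $(\Delta x_n)^2/\eps_n^2$ in the approximate semiconcavity must vanish, and the assumption $\Delta x_n/\eps_n\to0$ is what guarantees it. A secondary point is measurability of $(t,x)\mapsto D_pH(t,x,D_xv_{\Delta_n,\eps_n})$. This follows from the Carathéodory structure of $D_pH$ (measurable in $t$, Lipschitz in $(x,p)$), and I would treat it briefly.
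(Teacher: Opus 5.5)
Your route is the same as the paper's. Part (i) follows from \cref{prop: converg discr v} together with the mollification error $\eps_n\widetilde C_{\rm Lip}\int|x|\rho$. Part (ii) rests on a.e.\ convergence of $D_xv_{\Delta_n,\eps_n}[\mu_n]$, obtained from \eqref{eq: approx semiconc 2} and the uniform Lipschitz bounds as in the Carlini--Silva argument the paper cites; it then uses Lipschitz continuity of $D_pH$ in $p$ and dominated convergence with the bound $C_{\rm b}$.

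However, one step fails as written. Your one-sided Taylor bound, with additive error $\overline C_{\rm asc}(\Delta x_n)^2/\eps_n^2$, does not follow from integrating \eqref{eq: approx semiconc 2} along the segment. Write $h=y-x$, $z_s=x+sh$ and $\delta_n=(\Delta x_n)^2/\eps_n^2$. Applying the estimate to the pair $(z_s,x)$ and dividing by $s$ only gives
\begin{equation*}
\bigl(Dw_n(z_s)-Dw_n(x)\bigr)\cdot h\leq \overline C_{\rm asc}\Bigl(s|h|^2+\frac{\delta_n}{s}\Bigr),\qquad s\in(0,1].
\end{equation*}
The additive error is not scaled by $|z_s-x|^2$, so $\delta_n/s$ appears, and it is not integrable at $s=0$. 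The midpoint-iteration trick from \eqref{eq: approx semiconc 1} has the same defect: the additive error doubles at each halving step.

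The repair is short and leaves the rest of your argument intact. Integrate only over $s\in[s_0,1]$ with $s_0\in(0,1)$, which gives
\begin{equation*}
w_n(y)-w_n(x+s_0h)\leq (1-s_0)\,Dw_n(x)\cdot h+\frac{\overline C_{\rm asc}}{2}|h|^2+\overline C_{\rm asc}\,\delta_n\log(1/s_0).
\end{equation*}
First let $n\to\infty$ along the subsequence with $Dw_n(x)\to\bar p$, using (i) and $\delta_n\to0$ (which holds since $\Delta x_n/\eps_n\to0$). Only afterwards let $s_0\to0$, using continuity of $v[\mu](t,\cdot)$. This yields
\begin{equation*}
v[\mu](t,y)\leq v[\mu](t,x)+\bar p\cdot(y-x)+\frac{\overline C_{\rm asc}}{2}|y-x|^2,
\end{equation*}
which is your limiting inequality (with a better constant). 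Hence $\bar p=D_xv[\mu](t,x)$ at differentiability points, and the remaining steps go through unchanged: identification of all subsequential limits, a.e.\ convergence, $c_H$-Lipschitz continuity of $D_pH$, and dominated convergence.
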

\begin{proof}     The convergence   in  \ref{item: conv veps} is a direct consequence of \cref{prop: converg discr v} and  the uniform  
estimate \eqref{eq:infty_norm_difference_eps}. 
Moreover, thanks to the Lipschitz in space properties in \cref{prop: properties HJB} \ref{item: Lip v} and \cref{prop: properties decrete value function eps} \ref{item: Lip space vDeltaEsp},  and arguing as in \cite[Theorem 3.5]{CarliniSilva2014}   together with \eqref{eq: approx semiconc 2}, it follows that $D_xv_{\Delta_n,\eps_n}[\mu_n]$ converges to $D_xv[\mu]$ a.e. in $[0,T]\times \RR^d$ as  $n \to \infty$. By the continuity of $D_pH$ with respect to the third variable, established in \cref{rem: properties H} \ref{item: properies DpH}, 
we deduce that $D_pH(\cdot,\cdot,D_xv_{\Delta_n,\eps_n}[\mu_n](\cdot,\cdot))$
converges to $D_pH(\cdot,\cdot,D_xv[\mu](\cdot,\cdot))$ a.e in $[0,T]\times \RR^d$ as  $n \to \infty$.  Therefore, assertion \ref{item: conv drift}  follows from the  dominated
convergence theorem, since $||D_pH(\cdot,\cdot,D_xv_{\Delta_n,\eps_n}[\mu_n](\cdot,\cdot))||_{L^\infty([0,T]\times \RR^d)}\leq C_{\rm b}$  for all $n \in \NN$. 
\end{proof}
    
\subsection{ Lagrange-Galerkin  scheme for the continuity equation }\label{A_fully_discrete_scheme_for_the_CE}
Given $\mu  \in \CC([0,T];\P_1(\RR^d))$ and $(\Delta,\eps)= (\Delta t,\Delta x, \eps )\in(0,\infty)^3$, let $v_{\Delta,\eps}[\mu ]$ be defined as in \eqref{eq:v_epsilon}.
Consider the continuity equation
\begin{equation}\label{eq: CE deps}
    \begin{aligned}
        &\partial_tm(t,x)-\diver(m(t,x)D_pH(t,x,D_xv_{\Delta ,\eps}[\mu](t,x)))=0&&(t,x) \in (0,T)\times \RR^d, \\ 
        &m(0,x)=m^*_0(x) && x\in \RR^d.
    \end{aligned}
\end{equation}
We define, for every $i \in \ZZ^d$, the sets and the functions 
\begin{equation*} 
  E_{i}^{\Delta x}\coloneqq \{x\in\mathbb{R}^d \, \big| \,  |x-x_i|_\infty \leq \Delta x/2,\: x_i \in \GG_{\Delta x}\},  \qquad   \beta^{0,\Delta x}_i(x)\coloneqq \mathbb{I}_{E_{i}^{\Delta x}}(x).
\end{equation*}
We consider the following discrete scheme: define recursively $m\in \BB(\GG_{\Delta })$ as 
\begin{equation}\label{eq: Ce scheme}
\begin{aligned}
 m_{k+1,i}&=\frac{1}{{(\Delta x)^d}}\sum_{j\in\ZZ^d}m_{k,j}\int_{E_j^{\Delta x}}\beta_i^{0,\Delta x}(\mathsf{\Phi}_{k}^{\Delta,\epsilon}[\mu](x))\dd x &&\text{for all }(k,i)\in\It^{*}\times \ZZ^d,\\ 
m_{0,i}&=\frac{1}{(\Delta x)^d}\int_{E_{i}^{\Delta x}}m_{0}^{*}(x)\dd x &&\text{for all }i\in\ZZ^d,
\end{aligned}
\end{equation}
where 
\begin{equation} \label{Phi_mu_eps_k}
\mathsf{\Phi}_{k}^{\Delta,\epsilon}[\mu](x) \coloneqq  x- \int _{t_k}^{t_{k+1}}D _{p}H\big(s,x,D_x v_{\Delta,\epsilon}[\mu](t_k,x) \big)\dd s\quad\mbox{for all } k \in \It^*, \,  x\in\RR^d.    
\end{equation}
The scheme  \eqref{eq: Ce scheme} is explicit and conservative.

\begin{lem}\label{lem: compact supp m}
    Assume \onlyref{hyp: compact set,hyp: initial measure,hyp: lagrangian new,hyp:terminal cost}, let $(\Delta,\eps) = (\Delta t,\Delta x, \eps ) \in(0,\infty)^3$, $\mu  \in  \CC([0,T];\P_1(\RR^d))$, and  let $\{ m_{k,i} \,  |  \,  (k,i) \in   \It\times \ZZ^d\}$  be the solution to \eqref{eq: Ce scheme}. Then, for
every $c > 0$ there exists $\widetilde  C^*  > 0$, independent of $\mu$, such that if $\Delta x \leq  c\Delta t$, for every 
$k \in  \It$ we have $m_{k,i} =0$ if $x_i \notin    \ov B_\infty (0, \widetilde  C^* )$.
\end{lem}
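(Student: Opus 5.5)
We argue by induction on $k$, tracking the support of the discrete density as an $\ell^\infty$-ball whose radius grows by a fixed amount per time step. The key facts are two. First, by \cref{rem: properties H}\ref{item: bound DpH}, $|D_pH|\leq C_{\rm b}$. Hence the discrete characteristic map \eqref{Phi_mu_eps_k} satisfies
\begin{equation*}
|\mathsf{\Phi}_{k}^{\Delta,\epsilon}[\mu](x)-x|_\infty\leq|\mathsf{\Phi}_{k}^{\Delta,\epsilon}[\mu](x)-x|\leq C_{\rm b}\Delta t \quad\text{for all } x\in\RR^d,\ k\in\It^*,
\end{equation*}
uniformly in $\mu$ and $\eps$. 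Second, in \eqref{eq: Ce scheme}, $m_{k+1,i}$ can be nonzero only if there is some $j$ with $m_{k,j}\neq0$ and some $x\in E_j^{\Delta x}$ with $\mathsf{\Phi}_{k}^{\Delta,\epsilon}[\mu](x)\in E_i^{\Delta x}$. We will use $m_{k,j}\geq 0$, which holds inductively since $m_0^*\geq0$.

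We first handle the initial step. By \ref{hyp: initial measure}, $\supp(m_0^*)\subseteq\ov B(0,C_0^*)\subseteq \ov B_\infty(0,C_0^*)$. So $m_{0,i}\neq0$ forces $E_i^{\Delta x}\cap\ov B_\infty(0,C_0^*)$ to have positive measure, giving $|x_i|_\infty\leq C_0^*+\Delta x/2$.

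For the inductive step, suppose $m_{k,j}=0$ whenever $|x_j|_\infty>R_k$. If $m_{k+1,i}\neq0$, pick $j$ and $x$ as above. Then $|x-x_j|_\infty\leq\Delta x/2$, $|\mathsf{\Phi}_k(x)-x|_\infty\leq C_{\rm b}\Delta t$, and $|\mathsf{\Phi}_k(x)-x_i|_\infty\leq\Delta x/2$. It follows that $|x_i|_\infty\leq R_k+C_{\rm b}\Delta t+\Delta x$, so we may take $R_{k+1}=R_k+C_{\rm b}\Delta t+\Delta x$. Summing over $k\leq N_T=T/\Delta t$ yields
\begin{equation*}
R_k\leq C_0^*+\frac{\Delta x}{2}+N_T\,(C_{\rm b}\Delta t+\Delta x)\leq C_0^*+\frac{\Delta x}{2}+T C_{\rm b}+T\frac{\Delta x}{\Delta t}.
\end{equation*}

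The only genuine obstacle is that the grid-cell spreading contributes $\Delta x$ per step, and this accumulates to $T\Delta x/\Delta t$. This is precisely where the hypothesis $\Delta x\leq c\Delta t$ enters, bounding the accumulated term by $cT$. The stray $\Delta x/2$ is harmless: either $\Delta x\leq cT$ (since $\Delta t\leq T$), or we simply absorb it. We therefore set
\begin{equation*}
\widetilde C^*\coloneqq C_0^*+TC_{\rm b}+\tfrac{3}{2}cT.
\end{equation*}
This constant is independent of $\mu$, $\eps$ and $\Delta$, because the only property of $v_{\Delta,\eps}[\mu]$ we used is the uniform velocity bound. The conclusion follows with this choice.
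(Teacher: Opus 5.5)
Your proof is correct and follows the same route as the paper, which defers to \cite[Lemma 4.1]{CarliniSilvaZorkot} and uses only the uniform bound $|D_pH|\leq C_{\rm b}$. That argument is an induction in which the $\ell^\infty$-support radius grows by at most $C_{\rm b}\Delta t+\Delta x$ per step, and the hypothesis $\Delta x\leq c\Delta t$ controls the accumulated term $T\Delta x/\Delta t$. (The stray $\Delta x/2$ is handled directly by $\Delta x\leq c\Delta t\leq cT$, so your ``or we simply absorb it'' alternative is unnecessary.)
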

\begin{proof}
    The proof is the same as in \cite[Lemma 4.1]{CarliniSilvaZorkot}, exploiting the boundedness of $D_pH$ established in  \cref{rem: properties H} \ref{item: properies DpH}.
\end{proof} 
Given  $\mu  \in \CC([0,T];\P_1(\RR^d))$, $(\Delta,\eps)\in(0,\infty)^3$ and $m\in \BB(\GG_{\Delta})$,  solution to \eqref{eq: Ce scheme}, we define $m_{\Delta,\eps}[\mu]\colon[0,T]\times \RR^d \to \RR$ as
\begin{equation*}
    \begin{aligned}
        m_{\Delta,\eps}[\mu](t_k,x)\coloneqq \sum_{j\in \ZZ^d}m_{k,j}\beta_j
        ^{0,\Delta x}(x)  \qquad \text{for all }k \in \It, \, x \in \RR^d,
    \end{aligned}
\end{equation*}
and,  if $t\in (t_k,t_{k+1})$,  
\begin{equation*}\label{mdelta}
 m_{\Delta,\eps}[\mu](t,x)\coloneqq  \frac{t_{k+1}-t}{\Delta t}m_{\Delta,\eps}[\mu](t_k,x)+ \frac{t-t_k}{\Delta t}m_{\Delta,\eps}[\mu](t_{k+1},x)  \qquad \text{for all }k\in \It^*,\, x \in \RR^d.
\end{equation*}
We collect below some useful properties of $m_{\Delta,\eps}[\mu]$. 

\begin{prop}\label{prop: properties discr m}
    Assume \onlyref{hyp: compact set,hyp: initial measure,hyp: lagrangian new,hyp:terminal cost}, let $(\Delta,\eps) = (\Delta t,\Delta x, \eps ) \in(0,\infty)^3$,  and $\mu  \in  \CC([0,T];\P_1(\RR^d))$. 
    Then the following hold:
\begin{enumerate}[label={\rm(\roman*)},     ref={\rm(\roman*)}]
        \item \label{item: positive m} $ m_{\Delta,\eps}[\mu](t,x)\geq 0$ for all $(t,x)\in [0,T]\times \RR^d$.
         \item \label{item: linear prop m}
        Let $a = (a_i)_{  i\in \ZZ^d} \subset  \RR$ and set $\varphi_a(x) =\sum_{ i\in \ZZ^d} a_i\beta^{0,\Delta x}_i (x)$ for every $ x \in  \RR^d$. Then, for every $k\in \It^*$, we have
         \begin{equation*}
             \begin{aligned}
                \int_{\RR^d} 
\varphi_a(x)m_{\Delta,\eps} [\mu ](t_{k+1},x)\dd x = \int_{\RR^d}
\varphi_a(\Phi^{\Delta,\eps}_k[\mu ](x))m_{\Delta,\eps}[\mu ](t_k,x)\dd x.
             \end{aligned}
         \end{equation*}
         \item \label{item: density m}   $\int_{\RR^d}m_{\Delta,\eps}[\mu](t,x)\dd x=1$ for every $t \in [0,T]$.
         \item \label{item: error m}  For every  $L$-Lipschitz function $\varphi\colon\RR^d\to \RR $ and for every $k \in \It^*$, we have
         \begin{equation*}
             \begin{aligned}
                 \left|  
\int_{\RR^d} \varphi (x)m_{\Delta,\eps}[\mu](t_{k+1},x)\dd x -\int_{\RR^d} \varphi (\Phi^{ \Delta,\eps}_k[\mu ](x))m_{\Delta,\eps}[\mu ](t_k,x)\dd x \right|  \leq L\sqrt{d}\Delta x.
             \end{aligned}
         \end{equation*}
         \end{enumerate}
   Furthermore, given $c>0$ such that $\Delta x \leq  c\Delta t$, the following properties hold.        
\begin{enumerate}[label={\rm(\roman*)},     ref={\rm(\roman*)}, start=5]
  \item \label{item: compact support m}    Let  $\widetilde  C^*  > 0$ be as in \cref{lem: compact supp m}, then $ \supp (m_{\Delta,\eps}[\mu](t))\subseteq \ov B_\infty(0,\widetilde  C^*) $ for all  $t \in [0,T]$.
         \item \label{item: Lip time m} $d_1(m_{\Delta,\eps}[\mu ](t),m_{\Delta,\eps}[\mu ](s))\leq (C_{\rm b} +c\sqrt{d})|t- s|$  for all  $t, s \in  [0,T]$.   
    \end{enumerate}
\end{prop}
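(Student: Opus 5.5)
The plan is to argue directly from the explicit form of the scheme \eqref{eq: Ce scheme}, handling the four structural properties first and the two properties that need $\Delta x\le c\Delta t$ afterwards.

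\textbf{Structural properties.}
\begin{itemize}
\item Positivity \ref{item: positive m} follows by induction on $k$. The initial values $m_{0,i}$ are averages of the nonnegative density $m_0^*$. Each update is a sum of the nonnegative $m_{k,j}$ times integrals of nonnegative indicators. The interpolation in time is a convex combination, so it preserves nonnegativity.
\item For \ref{item: linear prop m}, note that $m_{\Delta,\eps}[\mu](t_{k+1},\cdot)$ is piecewise constant, so its integral against $\varphi_a$ is $(\Delta x)^d\sum_i a_i m_{k+1,i}$. Inserting the scheme and exchanging the sums gives
\[
\sum_j m_{k,j}\int_{E_j^{\Delta x}}\sum_i a_i\beta_i^{0,\Delta x}(\Phi_k^{\Delta,\eps}[\mu](x))\,\dd x.
\]
This equals $\int\varphi_a(\Phi_k^{\Delta,\eps}[\mu](x))\,m_{\Delta,\eps}[\mu](t_k,x)\,\dd x$, because $m_{\Delta,\eps}[\mu](t_k,\cdot)=m_{k,j}$ on $E_j^{\Delta x}$. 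The boundary overlaps of the cells $E_i^{\Delta x}$ have Lebesgue measure zero, so they do not matter.
\item Mass conservation \ref{item: density m} follows by taking $a_i\equiv1$, so $\varphi_a=1$ a.e., and inducting from $\int m_{\Delta,\eps}[\mu](0)=\int m_0^*=1$. Linear interpolation then gives the identity for all $t$.
\item For \ref{item: error m}, set $a_i=\varphi(x_i)$. Then $|\varphi-\varphi_a|\le L\sup_{x\in E_i^{\Delta x}}|x-x_i|\le L\sqrt d\,\Delta x/2$ everywhere. Apply \ref{item: linear prop m} to $\varphi_a$ and pay this error once on each side, using unit mass and positivity. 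The two errors add up to $L\sqrt d\,\Delta x$.
\end{itemize}

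\textbf{Properties under $\Delta x\le c\Delta t$.}
\begin{itemize}
\item For \ref{item: compact support m}, \cref{lem: compact supp m} shows that $m_{k,i}=0$ whenever $x_i\notin\ov B_\infty(0,\widetilde C^*)$. Hence $m_{\Delta,\eps}[\mu](t_k)$ is supported in the union of the cells $E_i^{\Delta x}$ with $x_i$ in that ball. I would absorb the extra $\Delta x/2$ into $\widetilde C^*$, enlarging the constant in the lemma if needed. Convex combinations preserve the support bound.
\item For \ref{item: Lip time m}, first handle consecutive nodes. For a $1$-Lipschitz $\varphi$, \ref{item: error m} gives
\[
\int\varphi\,\dd\bigl[m(t_{k+1})-m(t_k)\bigr]\le \sqrt d\,\Delta x+\int\bigl|\varphi(\Phi_k^{\Delta,\eps}[\mu](x))-\varphi(x)\bigr|\,m(t_k,x)\,\dd x.
\]
By \cref{rem: properties H}\ref{item: bound DpH} and \eqref{Phi_mu_eps_k}, $|\Phi_k^{\Delta,\eps}[\mu](x)-x|\le C_{\rm b}\Delta t$. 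Together with $\Delta x\le c\Delta t$, this yields $d_1\bigl(m(t_{k+1}),m(t_k)\bigr)\le (C_{\rm b}+c\sqrt d)\Delta t$.
\item For general $s<t$, the time interpolation is linear, so $m(t)-m(s)$ is a linear combination of the increments $m(t_{q+1})-m(t_q)$ with weights equal to the overlap lengths of $[s,t]$ with $[t_q,t_{q+1}]$ divided by $\Delta t$. Testing with a $1$-Lipschitz $\varphi$ and summing the increment bounds gives $(C_{\rm b}+c\sqrt d)|t-s|$.
\end{itemize}

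\textbf{Main obstacle.} I do not expect any step to be substantive. The only places that need care are the support bound, where the cell half-width must be reconciled with the constant of \cref{lem: compact supp m}, and the bookkeeping for non-grid times in \ref{item: Lip time m}, which the linear interpolation makes straightforward.
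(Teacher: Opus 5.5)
You follow the paper's route. The paper derives (i)--(iii) directly from \eqref{eq: Ce scheme}, (iv) from (iii) via the cellwise approximation $\varphi_a$ with $a_i=\varphi(x_i)$ as in \cite[Lemma 4.2]{CarliniSilvaZorkot}, (v) from \cref{lem: compact supp m}, and (vi) from (iii), (iv) and $|D_pH|\le C_{\rm b}$ as in \cite[Proposition 4.4]{CarliniSilvaZorkot}. Your treatment of (i), (ii) and of the interpolation bookkeeping in (vi) is fine. The same holds for the half-cell issue in (v), which the paper passes over: enlarging $\widetilde C^*$ by $cT/2$ is legitimate because $\Delta x\le c\Delta t\le cT$.

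The step that does not go through as written is passing an almost-everywhere identity through $\Phi_k^{\Delta,\eps}[\mu]$ in (iii) and (iv); the paper's one-line proof skips it as well. The cells $E_i^{\Delta x}$ are closed, so $\varphi_a=\sum_i a_i\beta_i^{0,\Delta x}$ counts points of the faces $\Sigma\coloneqq\bigcup_i\partial E_i^{\Delta x}$ more than once. For $a_i\equiv1$ you only have $\varphi_a\ge1$, with equality off $\Sigma$. Likewise, your bound $|\varphi-\varphi_a|\le L\sqrt d\,\Delta x/2$ holds off $\Sigma$, not everywhere. On the left-hand sides this is harmless, since $m_{\Delta,\eps}[\mu](t_{k+1},\cdot)$ is a density. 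On the right-hand sides, however, you integrate $\varphi_a\circ\Phi_k^{\Delta,\eps}[\mu]$ against $m_{\Delta,\eps}[\mu](t_k,\cdot)$. This requires $\bigl(\Phi_k^{\Delta,\eps}[\mu]\bigr)^{-1}(\Sigma)$ to be Lebesgue-null on $\{m_{\Delta,\eps}[\mu](t_k,\cdot)>0\}$, and nothing in your argument gives this for arbitrary $(\Delta,\eps)$. For example, take $d=1$ with $D_pH(p)=p$ on the relevant range, and suppose $D_xv_{\Delta,\eps}[\mu](t_k,x)=(x-c)/\Delta t$ on a set carrying mass. Then $\Phi_k^{\Delta,\eps}[\mu]\equiv c$ there. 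If $c$ lies on a face, that mass is counted twice, so (iii) itself fails.

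The missing ingredient is Step~1 of \cref{prop: stability CE}. When $\vartheta_{\Delta,\eps}=c_H\Delta t(1+\widetilde c_2/\eps)<1$, the map $\Phi_k^{\Delta,\eps}[\mu]$ is bi-Lipschitz, so preimages of null sets are null; this is exactly the argument the paper itself uses in \cref{thm: existence MFGd}. Alternatively, use half-open cells partitioning $\RR^d$, so that $\varphi_a\equiv1$ and your error bound hold everywhere. Either repair costs nothing in the regime $\Delta t=\mathcal{O}(\eps^2)$ used later, and with it your proof of (iii), (iv) and hence (vi) is complete.
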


\begin{proof}
Results \onlyref{item: positive m,item: linear prop m,item: density m} follow directly from the definition of $m_{\Delta,\eps}[\mu]$. Result \ref{item: error m} follows from the definition  of $m_{\Delta,\eps}[\mu]$ and assertion \ref{item: density m}, arguing as in \cite[Lemma 4.2]{CarliniSilvaZorkot}. Assertion \ref{item: compact support m} follows from \cref{lem: compact supp m}. The Lipschitz in time property \ref{item: Lip time m} is proved as in \cite[Proposition 4.4]{CarliniSilvaZorkot}, as a consequence of assertions \onlyref{item: error m,item: density m} and the boundedness of $D_pH$ established in \cref{rem: properties H} \ref{item: properies DpH}.
\end{proof}

Next, we prove a stability result, which follows arguments similar to those in \cite[Proposition 4.5]{CarliniSilvaZorkot}, using the notion of generalized gradient due to the lower regularity in our setting. Indeed, since the Hamiltonian is not smooth in our framework, we apply the chain rule in the sense of the generalized Jacobian (see Appendix \ref{appendix: generalized g}).
\begin{prop}\label{prop: stability CE}
Assume \onlyref{hyp: compact set,hyp: initial measure,hyp: lagrangian new,hyp:terminal cost}, let $(\Delta,\eps) = (\Delta t,\Delta x, \eps )\in(0,\infty)^2\times(0,1)$ and $\mu\in\CC([0,T];\P_1(\RR^d))$. For every $c_1$, $c_2>0$, there exists $\widetilde  c > 0$, independent of $(\mu,\Delta,\eps)$, such that if $\Delta x\leq c_1\Delta t$,  $\Delta t \leq   c_2\eps ^2$, and $\Delta t$ and $\eps$ are small enough, we have $m_{\Delta,\eps} [\mu ](t, \cdot ) \in L^{\rm p} (\RR^d)$ for all  $t \in  [0,T]$ and
\begin{equation*}
    \begin{aligned}
           ||m_{\Delta,\eps}[\mu](t)||_{L^{\rm p }(\RR^d)}\leq \widetilde c  ||m_0^*||_{L^{\rm p }(\RR^d)}  \qquad \text{for all }t \in [0,T].
    \end{aligned}
\end{equation*}

\end{prop}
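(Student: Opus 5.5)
The plan follows the argument of \cite[Proposition 4.5]{CarliniSilvaZorkot}. The core of the proof is a one-step bound
\begin{equation*}
\|m_{\Delta,\eps}[\mu](t_{k+1})\|_{L^{\rm p}(\RR^d)}\leq (1+C\Delta t)\,\|m_{\Delta,\eps}[\mu](t_k)\|_{L^{\rm p}(\RR^d)},
\end{equation*}
with $C$ independent of $(\mu,\Delta,\eps,k)$. Iterating over at most $N_T=T/\Delta t$ steps gives the factor $e^{CT}$. The initial step is handled by Jensen: $\|m_{\Delta,\eps}[\mu](0)\|_{L^{\rm p}}\leq\|m_0^*\|_{L^{\rm p}}$. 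For intermediate times $t\in(t_k,t_{k+1})$ the claim follows because $m_{\Delta,\eps}[\mu](t)$ is a convex combination of the values at $t_k$ and $t_{k+1}$. Membership in $L^{\rm p}$ is then automatic, since each $m_{\Delta,\eps}[\mu](t_k)$ is a compactly supported (\cref{prop: properties discr m}\ref{item: compact support m}) bounded piecewise-constant function.

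To get the one-step bound, write $m_{k+1,i}=(\Delta x)^{-d}\int_{\RR^d} m_{\Delta,\eps}[\mu](t_k,x)\,\mathbb{I}_{E_i^{\Delta x}}(\Phi_k(x))\dd x$ with $\Phi_k=\mathsf{\Phi}_k^{\Delta,\eps}[\mu]$. Applying Jensen/Hölder in each cell gives
\begin{equation*}
\|m_{\Delta,\eps}[\mu](t_{k+1})\|_{L^{\rm p}}\leq \|(\Phi_k)_\sharp m_k\|_{L^{\rm p}},
\end{equation*}
where $m_k=m_{\Delta,\eps}[\mu](t_k)$. If $\Phi_k$ is a bi-Lipschitz homeomorphism with $\det D\Phi_k\geq 1-C\Delta t$ a.e., then the change of variables formula for Lipschitz maps gives
\begin{equation*}
\|(\Phi_k)_\sharp m_k\|_{L^{\rm p}}\leq (1-C\Delta t)^{-(1-1/{\rm p})}\|m_k\|_{L^{\rm p}},
\end{equation*}
which is the desired bound.

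The main obstacle is the Jacobian estimate, because $D_pH$ is only Lipschitz. The map $\Phi_k$ is Lipschitz: by \eqref{eq:Lips_gradient} and \eqref{eq:infty_nabla_norm_difference_eps} with $m=2$, its Lipschitz constant is at most $1+c_H\Delta t(1+\widetilde c_2/\eps)$. Since $\Delta t\leq c_2\eps^2$, the perturbation $\Phi_k-{\rm Id}$ has Lipschitz constant $O(\Delta t/\eps)=O(\eps)<1$ for $\eps$ small, so $\Phi_k$ is a bi-Lipschitz homeomorphism. I would then invoke the Clarke chain rule for generalized Jacobians (Appendix~\ref{appendix: generalized g}). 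At a.e.\ $x$ and a.e.\ $s$, the Jacobian of $x\mapsto D_pH(s,x,D_xv_{\Delta,\eps}(t_k,x))$ lies in
\begin{equation*}
\co\{M_x+M_pD^2_{xx}v_{\Delta,\eps}(t_k,x)\},
\end{equation*}
where $(M_x,M_p)$ ranges over the generalized Jacobian of $(x,p)\mapsto D_pH$. Here $\|M_x\|\leq c_H$, and $M_p$ is symmetric positive semidefinite with $\|M_p\|\leq c_H$, because it is a limit of Hessians of the convex function $H(s,x,\cdot)$.

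The delicate point is the term $M_pD^2v$, which is not symmetric. Using \cref{prop: properties decrete value function eps}\ref{item: gradient vDeltaEps} together with $(\Delta x)^2\leq c_1^2(\Delta t)^2\leq c_1^2c_2^2\eps^4$, we get $D^2v\leq \widetilde C_{\rm hb}(1+c_1^2c_2^2)I=:\kappa I$. Since $D^2v$ is also bounded by $\widetilde c_2/\eps$, we have
\begin{equation*}
\det(I-\Delta t B)=1-\Delta t\,{\rm tr}B+O\bigl((\Delta t/\eps)^2\bigr),\qquad O\bigl((\Delta t/\eps)^2\bigr)=O(\Delta t).
\end{equation*}
Moreover ${\rm tr}(M_pD^2v)={\rm tr}(M_p^{1/2}D^2vM_p^{1/2})\leq \kappa\,{\rm tr}M_p\leq \kappa d c_H$. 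This bound is stable under convex combinations and under integration in $s$. Combining these gives $\det D\Phi_k\geq 1-C\Delta t$, which is exactly the semiconcavity-type one-sided control needed, and completes the argument.
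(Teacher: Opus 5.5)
Your proposal is correct and follows the paper's proof in all essentials. The shared steps are the contraction argument for the bi-Lipschitz property of $\Phi_k^{\Delta,\eps}[\mu]$, the Clarke chain rule with a symmetric positive semidefinite $p$-block, the trace inequality of \cref{lem: algebraicmatrix} combined with \cref{prop: properties decrete value function eps}\ref{item: gradient vDeltaEps} and $(\Delta x)^2\leq c_1^2c_2^2\eps^4$, and the determinant expansion giving $\det D\Phi_k^{\Delta,\eps}[\mu]\geq 1-C\Delta t$ a.e.

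The only differences are minor. You turn this Jacobian bound into the one-step $L^{\rm p}$ estimate via the pushforward density and cellwise Jensen, whereas the paper uses H\"older duality against $m_{k+1}^{{\rm p}-1}$ through \cref{prop: properties discr m}\ref{item: linear prop m}; the two are dual forms of the same estimate. Your phrase ``stable under integration in $s$'' needs a line of justification, which the paper supplies via the Aumann-integral inclusion of \cref{lem: differentiation under the integral sign}. Alternatively, a Fubini plus dominated-convergence argument shows that $D\Phi_k^{\Delta,\eps}[\mu](x)=Id-\int_{t_k}^{t_{k+1}}D_xb(s,x)\dd s$ for a.e.\ $x$, where $b(s,x)=D_pH(s,x,D_xv_{\Delta,\eps}[\mu](t_k,x))$.
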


\begin{proof}
  We proceed in several steps. 

\textit{Step 1. The map $\Phi_k^{\Delta,\eps}[\mu]\colon\RR^d\to\RR^d$ is a global bi-Lipschitz homeomorphism.} We show that for every $y \in \RR^d$ there exists a unique $x \in \RR^d$ such that $y=\Phi^{\Delta,\eps}_k[\mu](x)$. For simplicity, for a.e. $s \in [0,T]$ and for every $x \in \RR^d$, we define $ b(s,x)\coloneqq D_pH(s,x,D_xv_{\Delta,\eps}[\mu](t_k,x))$ and, given $y \in \RR^d$, set $F(x)\coloneqq y+\int_{t_k}^{t_{k+1}}b(s,x)\dd s$. Thus, $y=\Phi^{\Delta,\eps}_k[\mu](x)$ has a unique solution if and only if $F$ has a unique fixed-point. By \eqref{eq:Lips_gradient} and \eqref{eq:infty_nabla_norm_difference_eps},  we have
\begin{equation}
|b(s,x_1)-b(s,x_2)|\leq c_{H} \left(1+\frac{\widetilde c_2}{\eps } \right) |x_1-x_2|\quad\text{for all }x_1,\,x_2 \in \RR^d.
\label{eq: Lip b}
\end{equation}
Setting $\vartheta_{\Delta,\eps}:=c_{H} \Delta t (1+\frac{\widetilde c_2}{\eps })$, we deduce that $|F(x_1)-F(x_2)| \leq  \vartheta_{\Delta,\eps}|x_1-x_2|$. Our assumptions on $(\Delta,\epsilon)$ imply the existence of $\vartheta_0$, independent of $(\Delta,\epsilon)$ small enough, such that  $\vartheta_{\Delta,\eps}\leq\vartheta_0<1$. In particular,
the map $F$ is a contraction. Consequently, $\Phi^{\Delta,\eps}_k[\mu]$ is bijective.  Arguing in the same way, the function $\Phi_k^{\Delta,\eps}[\mu]$ is bi-Lipschitz.  Indeed,  for every $x_1,x_2 \in \RR^d$,~\eqref{eq: Lip b} implies 
\begin{equation*}
|\Phi_k^{\Delta,\eps}[\mu](x_1)-\Phi_k^{\Delta,\eps}[\mu](x_2)|\leq  |x_1-x_2|+ \vartheta_{\Delta,\eps}|x_1-x_2| \leq (1+\vartheta_0)|x_1-x_2|
\end{equation*}
and
$$
|\Phi_k^{\Delta,\eps}[\mu](x_1)-\Phi_k^{\Delta,\eps}[\mu](x_2)|\geq |x_1-x_2|
-\int_{t_k}^{t_{k+1}}|b(s,x_1)-b(s,x_2)|\dd s\geq (1-\vartheta_{\Delta,\eps})|x_1-x_2|,
$$
which shows that 
$$
\left|\big(\Phi_k^{\Delta,\eps}[\mu]\big)^{-1}(y_1)
-\big(\Phi_k^{\Delta,\eps}[\mu]\big)^{-1}(y_2)\right|
\leq \frac{1}{1-\vartheta_{0}}|y_1-y_2|
\qquad\text{for all }y_1,y_2\in\RR^d.
$$

\noindent\textit{Step 2. Generalized differential structure of the discrete
flow.} For $k\in \It^*$, set 
\begin{equation*}
q_k(x)\coloneqq D_xv_{\Delta,\eps}[\mu](t_k,x),\quad S_k(x)\coloneqq D^2_{xx}v_{\Delta,\eps}[\mu](t_k,x),\quad\text{and}\quad F_k(x)\coloneqq (x,q_k(x)).
\end{equation*}
For a.e. $s\in[t_k,t_{k+1}]$, define $G_s(x,p)\coloneqq D_pH(s,x,p)$ and notice that $b(s,\cdot)=G_s\circ F_k$. The map $F_k$ is continuously differentiable with $DF_k(x)=\left(
Id, S_k(x)\right)$ for all $x\in\RR^{d}$, whereas $G_s$ is Lipschitz by \cref{rem: properties H}\ref{item: properies DpH}. Therefore, \cref{thm: chain rule generalized J}  and  \cref{prop: generalized Jacobian elementary properties} \ref{item: Clarke cont diff} imply that 
\begin{equation}\label{eq: joint chain rule b}
\begin{aligned}
\partial\big(b(s,\cdot)\big)(x)
& \subseteq \co \big (  \partial G_s(x,q_k(x))DF_{k}(x) \big ) \\
&=\co \big ( 
\left\{B_x+B_pS_k(x)\,\middle|\,
[B_x\;B_p]\in\partial G_s(x,q_k(x))\right\} \big )  \\
&=\left\{B_x+B_pS_k(x)\,\middle|\,
[B_x\;B_p]\in\partial G_s(x,q_k(x))\right\},
\end{aligned}
\end{equation}
where in the last equality we have used that $\partial G_s(x,q_k(x))$ is convex (see  \cref{prop: generalized Jacobian elementary properties} \ref{item: Clarke convex set}). 
Moreover, for every $[B_x\;B_p]\in\partial G_s(x,q_k(x))$, one has
\begin{equation}\label{eq: properties joint blocks}
\|B_x\|\leq c_H,\qquad \|B_p\|\leq c_H,
\qquad B_p=B_p^{\mathsf T},
\qquad\text{and}\quad B_p\text{ is positive semidefinite}.
\end{equation}
Indeed, the norm estimates follow \cref{rem: properties H}\ref{item: properies DpH}. To justify the last two properties, first consider a point $(x',p')$ at which $G_s$ is differentiable. The $p$-block of
$DG_s(x',p')$ is given by $D^{2}_{p}H(s,x',p')$, which is symmetric and positive semidefinite by the convexity of $p\mapsto H(s,x',p)$. By definition, $B_p$ is a convex
combination of limits of such $p$-blocks and hence it is also symmetric and positive semidefinite. Let us write $\Phi_k^{\Delta,\eps}[\mu](x)=x-\mathscr{B}_k(x)$ with $\mathscr{B}_k(x)\coloneqq\int_{t_k}^{t_{k+1}}b(s,x)\dd s$. It follows from~\eqref{eq: Lip b} and~\cref{lem: differentiation under the integral sign}   that 
\begin{equation}\label{eq: generalized Jacobian integral b}
\partial\mathscr{B}_k(x)
\subseteq
\int_{t_k}^{t_{k+1}}\partial\big(b(s,\cdot)\big)(x)\dd s,
\end{equation}
where the integral on the right-hand side is the Aumann integral defined in~\eqref{eq:Aumann_integral} below. Thus, for every
$N\in\partial\mathscr{B}_k(x)$, there exists a measurable map $C\colon [t_k,t_{k+1}]\to\RR^{d\times d}$ such that
\begin{equation}
N=\int_{t_k}^{t_{k+1}}C(s)\dd s\quad\text{and}\quad C(s)\in\partial\big(b(s,\cdot)\big)(x)\quad\text{for a.e. }s\in [t_k,t_{k+1}].
\label{eq:N_properties}
\end{equation}
Notice that~\eqref{eq: Lip b} and our assumptions on $(\Delta,\eps)$ give
\begin{equation}
\|N\|\leq c_H\Delta t
\left(1+\frac{\widetilde c_2}{\eps}\right)
=\vartheta_{\Delta,\eps}=\mathcal{O}\left(\sqrt{\Delta t}\right)
\label{eq:N_stima}
\end{equation}
and, by~\cref{prop: generalized Jacobian elementary properties} \ref{item: Clarke linear} 
and~\eqref{eq: generalized Jacobian integral b}, every
$M\in\partial\Phi_k^{\Delta,\eps}[\mu](x)$ can be written as $M=Id-N$, with $N$ satisfying~\eqref{eq:N_properties} and~\eqref{eq:N_stima}. 

\textit{Step 3. $L^p$ bound.}  We first observe that by Step 1,  $(\Phi^{\Delta,\eps}_k[\mu])^{-1}$ is globally  Lipschitz on $\ov B_\infty(0, \widetilde C^*)$ and
there exists $R>0$ such that $(\Phi^{\Delta,\eps}_k[\mu])^{-1}(\ov B_\infty(0,\widetilde C^*))\subseteq \ov B_\infty(0,R)$. 
Thus, by~\cref{prop: properties discr m} \ref{item: compact support m} and the change-of-variables formula~\cite[Theorem 3.9]{EvansGariepy2015}, obtain that
\begin{eqnarray}\label{eq: M discr p 1}
        \int_{\RR^d}\Big(m_{\Delta,\eps}[\mu](t_{k+1},\Phi^{\Delta,\eps}_k[\mu](x))\Big )^{\rm p}\dd x=
        \int_{\ov B_\infty(0,R)}\Big(m_{\Delta,\eps}[\mu](t_{k+1},\Phi^{\Delta,\eps}_k[\mu](x))\Big )^{\rm p}\dd x 
        \nonumber \\ 
         =\int_{\Phi^{\Delta,\eps}_k[\mu](\ov B_\infty(0,R))} \Big(m_{\Delta,\eps}[\mu](t_{k+1},y)\Big )^{\rm p} \Big |\det D((\Phi^{\Delta,\eps}_k[\mu])^{-1})(y) \Big|\dd y.     
\end{eqnarray}
Denote by $\mathcal{N}_{1}$ the set of points at which $\Phi_k^{\Delta,\eps}[\mu]$ is not differentiable. By Step 1 and Rademacher's theorem, $\mathcal{N}_{1}$ has Lebesgue measure zero and, by~\cite[Theorem~2.8(i)]{EvansGariepy2015}, $\Phi_k^{\Delta,\eps}[\mu](\mathcal{N}_{1})$ has also measure zero. The inverse map is Lipschitz and is therefore differentiable outside a null set $\mathcal{N}_{2}$. Consequently, for a.e. $y\notin \mathcal{N}_{2}\cup\Phi_k^{\Delta,\eps}[\mu](\mathcal{N}_{1})$, the maps $\Phi_k^{\Delta,\eps}[\mu]$ and $(\Phi^{\Delta,\eps}_k[\mu])^{-1}$  are differentiable
at $y$ and at $x=(\Phi_k^{\Delta,\eps}[\mu])^{-1}(y)$, respectively.  Then, from $\Phi_k^{\Delta,\eps}[\mu]\circ(\Phi_k^{\Delta,\eps}[\mu])^{-1}=Id$, we deduce that
\begin{equation*}
\left|\det D(\Phi_k^{\Delta,\eps}[\mu])^{-1}(y)\right|
=\frac{1}{\left|\det D\Phi_k^{\Delta,\eps}[\mu](x)\right|}
\end{equation*}
and, hence,~\eqref{eq: M discr p 1} yields  
\begin{equation}
\int_{\RR^d}\Big(m_{\Delta,\eps}[\mu](t_{k+1},\Phi^{\Delta,\eps}_k[\mu](x))\Big )^{\rm p}\dd x=\int_{\Phi^{\Delta,\eps}_k[\mu](\ov B_\infty(0,R))}\frac{\Big(m_{\Delta,\eps}[\mu](t_{k+1},y)\Big )^{\rm p}}{\Big |\det D(\Phi^{\Delta,\eps}_k[\mu])((\Phi^{\Delta,\eps}_k[\mu])^{-1}(y)) \Big|} \dd y.
\label{eq:lp_estimate_determinant}
\end{equation}
Let us prove that there exists a constant $c_3>0$, independent of
$(\mu,\Delta,\eps)$, such that 
\begin{equation}
\left|\det\big(D\Phi_k^{\Delta,\eps}[\mu](x)\big)\right|^{-1}
\leq 1+c_3\Delta t\quad\text{for a.e. }x\in\ov B_\infty(0,R).
\label{eq:inverse_derterminant_bound}
\end{equation}
Let $x\in\ov B_\infty(0,R)$ be such that $\Phi_k^{\Delta,\eps}[\mu]$ is differentiable at $x$. By \cref{prop: generalized Jacobian elementary properties} \ref{item: Clarke diff}, one has $D\Phi_k^{\Delta,\eps}[\mu](x)\in\partial\Phi_k^{\Delta,\eps}[\mu](x)$ and hence, by Step~2, there exist a measurable map $C\colon [t_k,t_{k+1}]\to\RR^{d\times d}$ such that
\begin{equation}
D\Phi_k^{\Delta,\eps}[\mu](x)=Id-\int_{t_k}^{t_{k+1}}C(s)\dd s,\qquad
C(s)\in\partial\big(b(s,\cdot)\big)(x)
\quad\text{for a.e. }s.
\label{eq: representation derivative flow}
\end{equation}
Thus, by~\cref{lem: trace},~\eqref{eq:N_stima}, and our assumptions on $(\Delta,\eps)$, 
\begin{equation}
\left|\left|\det D\Phi_k^{\Delta,\eps}[\mu](x)\right|^{-1}-\left(1+{\rm tr}\left(\int_{t_k}^{t_{k+1}}C(s)\dd s\right)\right)\right|
=\mathcal{O}\left(\Delta t\right).
\label{eq:det_O}
\end{equation}
Fix $s$ for
which the inclusion~\eqref{eq: joint chain rule b} holds and
$C(s)\in\partial(b(s,\cdot))(x)$. There exists
$[B_x\;B_p]\in\partial G_s(x,q_k(x))$ such that $C(s)=B_x+B_pS_k(x)$.  By~\eqref{eq: properties joint blocks},
$B_p$ is symmetric and positive semidefinite. Since $S_k(x)$ is symmetric,~\eqref{eq: properties joint blocks}, 
\cref{lem: algebraicmatrix},
\cref{prop: properties decrete value function eps} \ref{item: gradient vDeltaEps}, and the conditions on $(\Delta,\eps)$ give  
\begin{multline*}
{\rm tr}(C(s))={\rm tr}(B_x)+{\rm tr}(B_pS_k(x))\leq d\|B_x\|+\lambda_{\max}(S_k(x)){\rm tr}(B_p)\\
\leq dc_H+\widetilde C_{\rm hb}
\left(1+\frac{(\Delta x)^2}{\eps^4}\right)dc_H\leq dc_H
+\widetilde C_{\rm hb}(1+c_1^2c_2^2)dc_H
\eqqcolon C_{\rm tr},
\end{multline*}
which, combined with~\eqref{eq:det_O}, yields~\eqref{eq:inverse_derterminant_bound}. 
Finally, from \eqref{eq:lp_estimate_determinant} and   \eqref{eq:inverse_derterminant_bound}, we have 
\begin{equation}\label{eq: M discr p 2}
    \begin{aligned}
         \int_{\RR^d}\Big(m_{\Delta,\eps}[\mu](t_{k+1},\Phi^{\Delta,\eps}_k[\mu](x))\Big )^{\rm p}\dd x\leq  \Big (1+ c_3\Delta t\Big ) \int_{\RR^d}\Big(m_{\Delta,\eps}[\mu](t_{k+1},x)\Big )^{\rm p}\dd x.
    \end{aligned}
\end{equation}
If $\rm p \neq \infty$, setting $\rm p^*  =\rm p /({\rm p}- 1)$, it follows from \cref{prop: properties discr m} \ref{item: linear prop m}, Holder's inequality and  \eqref{eq: M discr p 2}, that 
\begin{equation*}
    \begin{aligned}
        \|m_{\Delta,\eps}[\mu](t_{k+1},\cdot) \|_{L^{\rm p}(\RR^d)}^{\rm p}& =  \int_{\RR^d}(m_{\Delta,\eps}[\mu](t_{k+1},x))^{\rm p-1}m_{\Delta,\eps}[\mu](t_{k+1},x)\dd x  \\ 
        & =\int_{\RR^d}(m_{\Delta,\eps}[\mu](t_{k+1},\Phi^{\Delta,\eps}_k[\mu](x)))^{\rm p-1}m_{\Delta,\eps}[\mu](t_{k},x)\dd x \\ 
        &\leq \Big ( \int_{\RR^d}(m_{\Delta,\eps}[\mu](t_{k+1},\Phi^{\Delta,\eps}_k[\mu](x)))^{\rm p} \dd x \Big )^{\frac{1}{\rm p^*}}\| m_{\Delta,\eps}[\mu](t_{k},\cdot)\|_{L^{\rm p}(\RR^d)} \\ 
        & \leq \Big (1+  c_3\Delta t   \Big ) \| m_{\Delta,\eps}[\mu](t_{k+1},\cdot)\|^{\rm p-1}_{L^{\rm p}(\RR^d)} \| m_{\Delta,\eps}[\mu](t_{k},\cdot)\|_{L^{\rm p}(\RR^d)}
    \end{aligned}
\end{equation*}
and then  
\begin{equation*}
    \begin{aligned}
        \|m_{\Delta,\eps}[\mu](t_{k+1},\cdot)\|_{L^{\rm p}(\RR^d)} 
        & \leq \Big (1+  c_3\Delta t\Big) \| m_{\Delta,\eps}[\mu](t_{k},\cdot)\|_{L^{\rm p}(\RR^d)}.
    \end{aligned}
\end{equation*}
Iterating on $k$, we obtain   
\begin{equation*}
    \begin{aligned}
        \|m_{\Delta,\eps}[\mu](t_{k+1},\cdot) \|_{L^{\rm p}(\RR^d)}  \leq e^ {c_3T}|| m^*_0||_{L^{\rm p}(\RR^d)},\\
    \end{aligned}
\end{equation*}
and then the thesis with $\widetilde c= e^ {c_3T}$.
If ${\rm p}=\infty$, the thesis follows by letting $\rm p \to \infty$.

\end{proof}

We conclude this part with the following convergence result, which combines the properties of the proposed schemes with \cref{prop: converg discr v eps}.

\begin{prop}\label{prop: convergence}
  Assume \onlyref{hyp: compact set,hyp: initial measure,hyp: lagrangian new,hyp:terminal cost}. Let $((\Delta_n,\eps_n))_{n\in \NN}=((\Delta t_n,\Delta x_n, \eps_n ))_{n\in \NN} \subset(0,\infty)^3$, $(\mu_n)_{n\in \NN}  \subset \CC([0,T];\P_1(\RR^d))$, and $\mu  \in  \CC([0,T];\P_1(\RR^d))$  be such that, as $n\to \infty$, $(\Delta_n,\eps_n) \to 0$, $\Delta x_n = o(\Delta t_n)$, $\Delta t_n = \mathcal{O}((\eps_n)^2)$, and  $\mu_n \to \mu$ in $\CC([0,T];\P_1(\RR^d))$. 
  Then, up to some subsequence, the following hold:
\begin{enumerate}[label={\rm(\roman*)},     ref={\rm(\roman*)}]
    \item \label{item: conv veps in m} $(v_{\Delta_n,\eps_n}[\mu_n])_{n \in \NN}$ converges to $v[\mu ]$, uniformly over compact subsets of $[0,T]\times \RR^d$,
and, for every  $K \subset  [0,T] \times  \RR^d$ compact and $q \in  [1,\infty )$, $(\mathbb{I}_K (D_xv_{\Delta_n,\eps_n}[\mu_n]))_{n \in \NN}$
converges to $\mathbb{I}_K (D_xv[\mu]) $ in $L^q([0,T] \times \RR^d)$.
 \item \label{item: conv m} $(m_{\Delta_n,\eps_n}[\mu_n])_{n \in \NN}$ converges in $\CC([0,T];\P_1 (\RR^d))$ towards a solution $m\in  \CC([0,T];\P_1 (\RR^d)) \cap  L^{\mathsf{p}} ([0,T] \times  \RR^d) $ to \eqref{eq:CE}. Moreover, the convergence also holds
weakly in $L^{\rm p} ([0,T] \times \RR^d)$ if ${\rm p}  <\infty$ , and weakly$^*$  in $L^\infty ([0,T] \times  \RR^d)$ if ${\rm p}  =\infty$.
In addition, there exists $\widetilde c>0$ such that 
\begin{equation}\label{eq: bounded Lp}
    \begin{aligned}
           ||m(t)||_{L^{\rm p }(\RR^d)}\leq \tilde c  ||m_0^*||_{L^{\rm p }(\RR^d)}  \qquad \text{for all }t \in [0,T].
    \end{aligned}
\end{equation}
\end{enumerate}
\end{prop}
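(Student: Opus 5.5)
Part \ref{item: conv veps in m} is essentially \cref{prop: converg discr v eps}. The hypotheses $\Delta x_n=o(\Delta t_n)$ and $\Delta t_n=\mathcal{O}(\eps_n^2)$ give $(\Delta x_n)^2/\Delta t_n\to0$ and $\Delta x_n/\eps_n=o(\Delta t_n/\eps_n)=o(\eps_n)\to0$. Hence the uniform convergence of $v_{\Delta_n,\eps_n}[\mu_n]$ on compacts follows from \cref{prop: converg discr v eps}\ref{item: conv veps}. The a.e. convergence of $D_xv_{\Delta_n,\eps_n}[\mu_n]$ to $D_xv[\mu]$ was obtained inside that proof, via the argument of \cite[Theorem 3.5]{CarliniSilva2014} combined with \eqref{eq: approx semiconc 2}. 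Together with the uniform bound $\widetilde C_{\rm Lip}$ and dominated convergence on $K$, this gives the $L^q$ convergence.

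For \ref{item: conv m}, I would argue by compactness. By \cref{prop: properties discr m}\ref{item: compact support m},\ref{item: Lip time m} (applicable since $\Delta x_n\le c\Delta t_n$ for $n$ large), the curves $m_n:=m_{\Delta_n,\eps_n}[\mu_n]$ are equi-Lipschitz in $d_1$ and supported in the fixed compact set $\ov B_\infty(0,\widetilde C^*)$. The set of probability measures on this compact is $d_1$-compact, so Arzel\`a--Ascoli yields a subsequence converging in $\CC([0,T];\P_1(\RR^d))$ to some $m$. By \cref{prop: stability CE}, $\|m_n(t)\|_{L^{\rm p}}\le\widetilde c\|m_0^*\|_{L^{\rm p}}$ uniformly in $t$ and $n$. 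Because the supports are uniformly bounded, the sequence is bounded in $L^{\rm p}([0,T]\times\RR^d)$ and, up to a further subsequence, converges weakly (weakly$^*$ if ${\rm p}=\infty$). The weak limit coincides with $m$ by testing with $\CC_c$ functions. The bound \eqref{eq: bounded Lp} follows for each $t$ from weak lower semicontinuity of the $L^{\rm p}$ norm, since $m_n(t)\to m(t)$ narrowly and the bound is uniform.

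The main step is identifying $m$ as a distributional solution of \eqref{eq:CE}. Fix $\phi\in\CC_c^\infty(\RR^d)$ and $t\in[0,T]$. By \cref{prop: properties discr m}\ref{item: error m} and a Taylor expansion of $\phi$ around $x$,
\begin{equation*}
\int\phi\, m_n(t_{k+1})\dd x-\int\phi\, m_n(t_k)\dd x=-\int\!\!\int_{t_k}^{t_{k+1}}D\phi(x)\cdot D_pH\bigl(s,x,D_xv_{\Delta_n,\eps_n}[\mu_n](t_k,x)\bigr)\dd s\, m_n(t_k,x)\dd x+\mathcal{O}\bigl(\Delta x_n+\Delta t_n^2\bigr).
\end{equation*}
The second-order term is $\mathcal{O}(\Delta t_n^2)$ because $|D_pH|\le C_{\rm b}$. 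Summing over $k$ up to $\lfloor t/\Delta t_n\rfloor$ gives accumulated errors $\mathcal{O}(\Delta x_n/\Delta t_n+\Delta t_n)\to0$, and this is precisely where $\Delta x_n=o(\Delta t_n)$ is used. The left side converges to $\int\phi\,m(t)-\int\phi\,m_0^*$, using the $d_1$ convergence and $m_n(0)\to m_0^*$.

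The right side contains $D_xv_{\Delta_n,\eps_n}[\mu_n]$ frozen at $t_k$, which is piecewise constant in time; this is the obstacle. I would write it as $-\int_0^t\int D\phi\cdot b_n\,\tilde m_n\,\dd x\dd s$, where $\tilde m_n$ is the piecewise-constant-in-time interpolant. Since $v_\Delta$ is already defined with $\lceil t/\Delta t\rceil$, the gradient term is essentially the one treated in \cref{prop: converg discr v eps}, up to an index shift, which is harmless by the same a.e. convergence argument. Hence $b_n\to D_pH(\cdot,\cdot,D_xv[\mu])$ strongly in $L^{{\rm p}^*}$ on the compact support, by \cref{prop: converg discr v eps}\ref{item: conv drift}. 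The difference between $\tilde m_n$ and $m_n$ tends to zero weakly, by the $d_1$-Lipschitz estimate, and $\tilde m_n\rightharpoonup m$ weakly in $L^{\rm p}$. A strong--weak product limit then identifies the limit integral. Finally, continuity in $t$ of both sides upgrades the identity from grid times to all $t\in[0,T]$.
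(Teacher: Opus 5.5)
Your proposal is correct and follows the same route as the paper. Part (i) is read off from \cref{prop: converg discr v eps} once $(\Delta x_n)^2/\Delta t_n\to0$ and $\Delta x_n/\eps_n\to0$ are checked. Compactness in $\CC([0,T];\P_1(\RR^d))$ comes from the uniform support bound and the $d_1$-Lipschitz estimate via Arzel\`a--Ascoli. The weak $L^{\rm p}$ limit and \eqref{eq: bounded Lp} come from \cref{prop: stability CE}. The limit equation is identified through the discrete weak formulation, with accumulated error $\mathcal{O}(\Delta x_n/\Delta t_n+\Delta t_n)$.

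The paper simply delegates this last step to \cite[Proposition 4.6]{CarliniSilvaZorkot}. Your strong--weak product argument spells it out, including the point the paper leaves implicit: the frozen drift uses $D_xv_{\Delta_n,\eps_n}[\mu_n](t_k,\cdot)$, which differs from the interpolant $v_{\Delta_n}$ on $(t_k,t_{k+1})$ by a one-step time shift, and this shift is harmless.
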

\begin{proof}

The convergence in~\ref{item: conv veps in m} corresponds to  \cref{prop: converg discr v eps}. We now prove~\ref{item: conv m}. By \cref{prop: properties discr m}~\onlyref{item: positive m,item: density m,item: compact support m}, the sequence $(m_{\Delta_n,\eps_n}[\mu_n](t))_{n \in \NN}$ is tight and has uniformly integrable moments. Therefore, by \cite[Proposition~7.1.5]{AmbrosioGigliSavare}, this set is compact in $\P_1(\RR^d)$ for every $t\in [0,T]$. Combined with the Lipschitz-in-time property~\ref{item: Lip time m} in \cref{prop: properties discr m}, this allows us to apply the Ascoli--Arzelà theorem, yielding that $(m_{\Delta_n,\eps_n}[\mu_n])_{n \in \NN}$ is compact in $\CC([0,T];\P_1(\RR^d))$. Hence, there exists $m \in \CC([0,T];\P_1(\RR^d))$ such that, up to some subsequence, $m_{\Delta_n,\eps_n}[\mu_n] \to m$ in $\CC([0,T];\P_1(\RR^d))$. By \cref{prop: stability CE}, the convergence also holds weakly in $L^{\rm p}([0,T]\times\RR^d)$ if ${\rm p}<\infty$, or weakly$^*$ in $L^\infty([0,T]\times\RR^d)$ if ${\rm p}=\infty$, and $m$ satisfies~\eqref{eq: bounded Lp}.
Following the argument in \cite[Proposition 4.6]{CarliniSilvaZorkot}, and using \cref{rem: properties H}, \cref{prop: properties discr m}, and assertion~\ref{item: conv veps in m}, one deduces that $m$ solves \eqref{eq:CE}.
\end{proof}
\subsection{A Lagrangian scheme for  the MFG system}
Let $(\Delta,\eps)= (\Delta t,\Delta x, \eps ) \in(0,\infty)^3$, we define 
\begin{equation*}
    \Ix\coloneqq \{i \in \mathbb{Z}^d  \, | \, x_i=i\Delta x \in \ov B_\infty(0, \widetilde C^*) \}
\end{equation*} 
and
\begin{equation*}
    \begin{aligned}
        \DD_{\Delta}\coloneqq \left\{ \mu=(\mu_{k,i})_{(k,i)\in \It\times \Ix} \,  \Bigg| \, \mu_{k,i}\geq 0, \, \sum_{i \in \Ix}\mu_{k,i}(\Delta x)^d=1, \, \text{for all } (k,i)\in \It \times \Ix\right\}.
    \end{aligned}
\end{equation*}
We observe that $\DD_{\Delta}$ is a convex and compact subset
of $\RR^{(N_T+1)\times (2N_{\Delta x}+1)^d}$, where $N_{\Delta x}\coloneqq \left\lceil\frac{\widetilde C^*}{\Delta x} \right\rceil$. 
Given $\mu \in \DD_{\Delta}$, we define its extension $\widetilde \mu \in \CC ([0,T];\P_1(\RR^d))$ as
\begin{equation}\label{eq: mu tilde tk}
    \begin{aligned}
       \dd [\widetilde \mu(t_k)](x)\coloneqq  \sum_{j\in \Ix}\mu_{k,j}\beta_j
        ^{0,\Delta x}(x)  \qquad \text{for all }k \in \It \text{ and }x \in \RR^d ,
    \end{aligned}
\end{equation}
and  if $t\in (t_k,t_{k+1})$ with $ k \in \It^*$ as   
\begin{equation}\label{eq: mu tilde t}
  \dd [\widetilde \mu(t)](x)\coloneqq  \frac{t_{k+1}-t}{\Delta t} \dd [\widetilde \mu(t_k)](x)+ \frac{t-t_k}{\Delta t} \dd [\widetilde \mu(t_{k+1})](x)  \qquad \text{for all }x \in \RR^d.
\end{equation}
We consider the following discretization of \eqref{eq:MFG}: find $\mu \in \DD_{\Delta}$ such that
\begin{equation}\label{eq:MFGd}\tag{MFG$_{\Delta,\eps}$}
    \begin{aligned}
        \mu_{k,i}=m_{\Delta,\eps}[\widetilde \mu](t_k,x_i) \text{ for every } (k,i)\in \It\times \Ix.
        \end{aligned}
\end{equation} 

\begin{theo}\label{thm: existence MFGd}
     Assume \onlyref{hyp: compact set,hyp: initial measure,hyp: lagrangian new,hyp:terminal cost}, let  $(\Delta,\eps) = (\Delta t,\Delta x, \eps ) \in(0,\infty)^3$.
     Then, if $\Delta t/\eps$  is small enough and there exists $c > 0$ with $\Delta x \leq  c\Delta t$, 
     \eqref{eq:MFGd} admits at least one solution.
\end{theo}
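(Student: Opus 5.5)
We will obtain a solution of \eqref{eq:MFGd} as a fixed point of a continuous self-map of $\DD_{\Delta}$, using Brouwer's theorem. Define $\Psi_{\Delta,\eps}\colon\DD_{\Delta}\to\RR^{(N_T+1)\times(2N_{\Delta x}+1)^d}$ by
\begin{equation*}
\Psi_{\Delta,\eps}(\mu)_{k,i}\coloneqq m_{\Delta,\eps}[\widetilde\mu](t_k,x_i)\qquad\text{for all }(k,i)\in\It\times\Ix .
\end{equation*}
A solution of \eqref{eq:MFGd} is exactly a fixed point of $\Psi_{\Delta,\eps}$. Since $\DD_{\Delta}$ is a nonempty convex compact subset of a finite-dimensional space, it suffices to show that $\Psi_{\Delta,\eps}$ maps $\DD_{\Delta}$ into itself and is continuous.

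\emph{Self-map.} Fix $\mu\in\DD_\Delta$. Its extension $\widetilde\mu$ belongs to $\CC([0,T];\P_1(\RR^d))$: it is piecewise affine in time between compactly supported probability densities. So the discrete objects $v_{\Delta,\eps}[\widetilde\mu]$, $\Phi_k^{\Delta,\eps}[\widetilde\mu]$ and $m_{\Delta,\eps}[\widetilde\mu]$ are well defined. By \cref{prop: properties discr m}~\ref{item: positive m} the values are nonnegative. Since $\Delta x\le c\Delta t$, \cref{lem: compact supp m} gives $m_{k,i}=0$ for $x_i\notin\ov B_\infty(0,\widetilde C^*)$, and $\widetilde C^*$ does not depend on $\mu$. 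Combined with mass conservation (\cref{prop: properties discr m}~\ref{item: density m}), this yields $\sum_{i\in\Ix}m_{k,i}(\Delta x)^d=1$. Hence $\Psi_{\Delta,\eps}(\mu)\in\DD_\Delta$. I expect the smallness of $\Delta t/\eps$ to be needed only to make sense of the flow, namely that $\Phi_k^{\Delta,\eps}$ is a bi-Lipschitz homeomorphism as in Step~1 of \cref{prop: stability CE}. There $\vartheta_{\Delta,\eps}=c_H\Delta t(1+\widetilde c_2/\eps)<1$ once $\Delta t/\eps$ (and hence $\Delta t$) is small; this keeps the pushforward well behaved.

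\emph{Continuity.} We argue as a chain of continuous maps. First, $\mu\mapsto\widetilde\mu$ is continuous from $\DD_\Delta$ to $\CC([0,T];\P_1(\RR^d))$, since $d_1$ between the piecewise constant densities is bounded by an $L^1$-type norm of the coefficient differences on the bounded set $\ov B_\infty(0,\widetilde C^*)$. Second, the scheme \eqref{eq:HJBdiscr} is a finite backward recursion with $\Delta$ fixed. Each step $W\mapsto S_\Delta[\mu](W,t_k,x_i)$ is $1$-Lipschitz in $W$ for the sup norm, by monotonicity and invariance under constants. The only dependence on $\mu$ is through $\int_{t_k}^{t_{k+1}}f(s,x_i,\mu(t_k))\dd s$ and the terminal datum $g(x_i,\mu(T))$. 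These are continuous in $\mu$ by \ref{hyp:terminal cost}, with dominated convergence used in $s$, because $f(s,x,\cdot)$ is continuous for a.e.\ $s$ and bounded. Since the grid is infinite, we need the pointwise convergence to be uniform in $i$; we will use only the finitely many nodes relevant to $\ov B_\infty(0,\widetilde C^*+C_{\rm b}T+1)$, or note that the data enter through continuous bounded functionals. It then follows that $v_\Delta[\widetilde\mu](t_k,\cdot)$ depends continuously on $\mu$, locally uniformly in $x$. After convolution, $D_xv_{\Delta,\eps}[\widetilde\mu](t_k,\cdot)$ depends continuously on $\mu$, uniformly on compacts, because $\eps$ is fixed and $D\rho_\eps$ is integrable.

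Next, by \eqref{eq:Lips_gradient} and dominated convergence in $s$, $\Phi_k^{\Delta,\eps}[\widetilde\mu]$ converges uniformly on compacts when $\mu_n\to\mu$. The main obstacle is the last link: continuity of
\begin{equation*}
\mu\mapsto\int_{E_j^{\Delta x}}\beta_i^{0,\Delta x}\bigl(\Phi_k^{\Delta,\eps}[\widetilde\mu](x)\bigr)\dd x=\bigl|E_j^{\Delta x}\cap(\Phi_k^{\Delta,\eps}[\widetilde\mu])^{-1}(E_i^{\Delta x})\bigr|,
\end{equation*}
where the indicator is discontinuous. I would handle it through the bi-Lipschitz property from Step~1. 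The indicator of $E_i^{\Delta x}$ is discontinuous only on $\partial E_i^{\Delta x}$, which is Lebesgue-null. The preimage of this null set under the limiting bi-Lipschitz map $\Phi_k^{\Delta,\eps}[\widetilde\mu]$ is again null. So $\beta_i^{0,\Delta x}(\Phi_k^{\Delta,\eps}[\widetilde\mu_n](x))\to\beta_i^{0,\Delta x}(\Phi_k^{\Delta,\eps}[\widetilde\mu](x))$ for a.e.\ $x$, and dominated convergence on the bounded set $E_j^{\Delta x}$ gives continuity of the transition coefficients. The recursion \eqref{eq: Ce scheme} is finite and linear in $m_k$, with the initial datum independent of $\mu$, so $\Psi_{\Delta,\eps}$ is continuous. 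Brouwer's fixed point theorem then yields a fixed point, which is a solution of \eqref{eq:MFGd}.
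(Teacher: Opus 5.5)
Your proposal is correct and follows the paper's proof essentially step by step. The paper likewise applies Brouwer's theorem on $\DD_{\Delta}$ and gets the self-map property from positivity, mass conservation and the $\mu$-independent support bound. It proves continuity along the same chain $\mu\mapsto\widetilde\mu\mapsto v_{\Delta}\mapsto D_xv_{\Delta,\eps}\mapsto\Phi_k^{\Delta,\eps}$, and it handles the discontinuous indicator exactly as you do, via the null preimage of $\partial E_i^{\Delta x}$ under the bi-Lipschitz limit flow.

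Your remark on the infinite grid (each node has a finite domain of dependence) supplies a detail the paper only asserts. The one small imprecision is that $\vartheta_{\Delta,\eps}<1$ requires $\Delta t$ as well as $\Delta t/\eps$ to be small, and the former does not follow from the latter unless $\eps$ is bounded; the paper's statement glosses over this in the same way.
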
    
\begin{proof}
We define the map $T \colon \mathscr{D}_{\Delta} \to \RR^{(N_T+1)\times (2N_{\Delta x}+1)^d}$ by
\begin{equation*}
    \big(T(\mu)\big)_{k,i} = m_{\Delta,\eps}[\widetilde\mu](t_k,x_i) \quad \text{for all } (k,i)\in \It^*\times \Ix,
\end{equation*}
and observe that a solution to \eqref{eq:MFGd} is equivalently a fixed point of $T$. The proof then follows by applying Brouwer's fixed-point theorem.
\cref{prop: properties discr m} \onlyref{item: density m,item: positive m,item: compact support m} imply that  $T(\mathscr{D}_{\Delta}) \subseteq  \mathscr{D}_{\Delta}$. 
 We now show that $T$ is continuous. Let $(\mu_n)_{n \in \NN}\subset \mathscr{D}_{\Delta }$ and $\mu \in \mathscr{D}_{\Delta }$ such that $\mu_n \to \mu$, as $n\to \infty$. We prove that $T(\mu_n)\to T(\mu)$, as $n\to \infty$.  
Let $(\widetilde{\mu}_n)_{n \in \NN}\subset \CC([0,T];\P_1(\RR^d))$ and  $\widetilde{\mu} \in \CC([0,T];\P_1(\RR^d))$ be as defined in \eqref{eq: mu tilde tk}--\eqref{eq: mu tilde t}, corresponding to $\mu_n$ and $\mu$, respectively.
It follows that $\widetilde{\mu}_n \to \widetilde{\mu}$ in $\CC([0,T];\P_1(\RR^d))$ as $n\to \infty$. The continuity of $f(t,x,\cdot)$ and $g(x,\cdot)$ assumed in \ref{hyp:terminal cost} implies that $v_{\Delta}[\widetilde{\mu}_n] \to v_{\Delta}[\widetilde{\mu}]$ pointwise in $[0,T]\times \RR^d$ as $n\to \infty$. By \Cref{prop: properties decrete value function}\ref{item: stability vDelta} and the dominated convergence theorem, $D_xv_{\Delta,\eps}[\widetilde{\mu}_n] \to D_xv_{\Delta,\eps}[\widetilde{\mu}]$ pointwise in $[0,T]\times \RR^d$ as $n\to \infty$. By \cref{rem: properties H}~\ref{item: properies DpH}, for every $k \in \It^*$, $\Phi^{\Delta,\eps}_k[\widetilde{\mu}_n]\to \Phi^{\Delta,\eps}_k[\widetilde{\mu}]$ pointwise in $\RR^d$ as $n\to \infty$. In particular,
\begin{equation*}
\beta^{0,\Delta x}_i\big(\Phi^{\Delta,\eps}_k[\widetilde{\mu}_n](x)\big) \to \beta^{0,\Delta x}_i\big(\Phi^{\Delta,\eps}_k[\widetilde{\mu}](x)\big) \quad \text{for all } x \in \RR^d \setminus \big((\Phi^{\Delta,\eps}_k[\widetilde{\mu}])^{-1}(\partial E_i^{\Delta x})\big).
\end{equation*}
Moreover, the set $(\Phi^{\Delta,\eps}_k[\widetilde{\mu}])^{-1}(\partial E_i^{\Delta x})$ has null measure. Indeed, as shown in \textit{Step 1} of the proof of \cref{prop: stability CE}, for $\Delta t/\eps$ sufficiently small, $\Phi^{\Delta,\eps}_k[\mu]$ is a global bi-Lipschitz homeomorphism. Since $\partial E_i^{\Delta x}$ has null measure in $\RR^d$, it follows that $(\Phi^{\Delta,\eps}_k[\widetilde{\mu}])^{-1}(\partial E_i^{\Delta x})$ also has null measure in $\RR^d$. Therefore, $\beta^{0,\Delta x}_i(\Phi^{\Delta,\eps}_k[\widetilde{\mu}_n](x)) \to \beta^{0,\Delta x}_i(\Phi^{\Delta,\eps}_k[\widetilde{\mu}](x))$ almost everywhere in $\RR^d$. By the dominated convergence theorem,
\begin{equation*}
    \int_{E_j^{\Delta x}}
    \beta^{0,\Delta x}_i\big(\Phi^{\Delta,\eps}_k[\widetilde{\mu}_n](x)\big)\dd x 
    \to   
    \int_{E_j^{\Delta x}}
    \beta^{0,\Delta x}_i\big(\Phi^{\Delta,\eps}_k[\widetilde{\mu}](x)\big)\dd x
    \quad \text{for all } k \in \It^*,\, i,j \in \mathbb{Z}^d,
\end{equation*}
as $n\to\infty$. By definition, this yields $T(\mu_n)\to T(\mu)$ as $n\to\infty$, hence $T$ is continuous. The existence of a fixed point of $T$, and therefore of a solution to \eqref{eq:MFGd}, follows from Brouwer's fixed-point theorem.

\end{proof}
\begin{theo}\label{thm: convergence scheme}
      Assume \onlyref{hyp: compact set,hyp: initial measure,hyp: lagrangian new,hyp:terminal cost}. Let  $((\Delta _n,\eps_n))_{n\in \NN}=((\Delta t_n, \Delta x_n,\eps_n))_{n\in \NN}\subset (0,\infty)^3 $ be such that, as $n\to \infty$, $(\Delta_n,\eps_n) \to 0$, $\Delta x_n = o(\Delta t_n)$, and $\Delta t_n = \mathcal{O}((\eps_n)^2)$. 
For every $n$ large enough, let $m_n \in  \DD_{\Delta_n}$ be a solution to {\rm MFG}$_{\Delta_n,\eps_n}$ and define
$\widetilde m_n$ by \eqref{eq: mu tilde tk}-\eqref{eq: mu tilde t}.
Then, there exists a solution $(v^*,m^*)$ to \eqref{eq:MFG} such that, up to some subsequence, the following hold:
\begin{enumerate}[label={\rm(\roman*)},     ref={\rm(\roman*)}]
    \item \label{item: convergence v}  $(v_{\Delta_n,\eps_n}[\widetilde m_n])_{n \in \NN}$ converges to $v^*$, uniformly over compact subsets of $[0,T]\times \RR^d$.
    \item \label{item: convergence m}   $(\widetilde m_n)_{n \in \NN}$ converges in $\CC([0,T];\P_1 (\RR^d))$ towards  $m^*$. Moreover, the convergence also holds
weakly in $L^{\rm p} ([0,T] \times \RR^d)$ if ${\rm p}  <\infty$ , and weakly$^*$  in $L^\infty ([0,T] \times  \RR^d)$ if ${\rm p}  =\infty$.
In addition, there exists $\widetilde c>0$ such that 
\begin{equation*}
    \begin{aligned}
           ||m^*(t)||_{L^{\rm p }(\RR^d)}\leq \widetilde c  ||m_0^*||_{L^{\rm p }(\RR^d)}  \qquad \text{for all }t \in [0,T].
    \end{aligned}
\end{equation*}
\end{enumerate}
 If, in addition, \cref{hyp: monoticityassumption} is satisfied, the whole sequence $((v_{\Delta_n,\varepsilon_n}[\widetilde m_n],\widetilde m_n))_{n \in \NN}$ converges to the unique solution $(v^*,m^*)$.
\end{theo}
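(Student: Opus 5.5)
The plan is a compactness argument in $\CC([0,T];\P_1(\RR^d))$ for the measure component, followed by identification of the limit through \cref{prop: convergence}.

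\textbf{Step 1: compactness.} For $n$ large, the conditions $\Delta x_n=o(\Delta t_n)$ and $\Delta t_n=\mathcal{O}(\eps_n^2)$ give $\Delta t_n/\eps_n\to 0$ and $\Delta x_n\le c\,\Delta t_n$. Hence \cref{thm: existence MFGd} applies, and $m_n$ exists. Since $m_n$ solves \eqref{eq:MFGd}, the extension $\widetilde m_n$ coincides with $m_{\Delta_n,\eps_n}[\widetilde m_n]$; both are piecewise constant in space and linearly interpolated in time, and both are supported in $\ov B_\infty(0,\widetilde C^*)$ by \cref{lem: compact supp m}. By \cref{prop: properties discr m}\ref{item: positive m}, \ref{item: density m} and \ref{item: compact support m}, the family $(\widetilde m_n(t))_n$ consists of probability measures with uniformly bounded support. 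It is therefore relatively compact in $\P_1(\RR^d)$ for every $t$. Property \ref{item: Lip time m} gives equi-Lipschitz continuity in time with respect to $d_1$. By Ascoli--Arzel\`a, a subsequence satisfies $\widetilde m_n\to m^*$ in $\CC([0,T];\P_1(\RR^d))$ for some $m^*$.

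\textbf{Step 2: identification of the limit.} Apply \cref{prop: convergence} with $\mu_n\coloneqq\widetilde m_n$ and $\mu\coloneqq m^*$. After extracting a further subsequence, this yields:
\begin{itemize}
\item $v_{\Delta_n,\eps_n}[\widetilde m_n]\to v[m^*]$ locally uniformly, which is \ref{item: convergence v} with $v^*\coloneqq v[m^*]$;
\item $m_{\Delta_n,\eps_n}[\widetilde m_n]\to \bar m$ in $\CC([0,T];\P_1(\RR^d))$ and weakly (respectively weakly$^*$) in $L^{\rm p}$, where $\bar m$ solves \eqref{eq:CE} with $\mu=m^*$ and satisfies the $L^{\rm p}$ bound \eqref{eq: bounded Lp}.
\end{itemize}
Because $m_{\Delta_n,\eps_n}[\widetilde m_n]=\widetilde m_n$, uniqueness of limits gives $\bar m=m^*$. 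Consequently $m^*\in\CC([0,T];\P_1(\RR^d))\cap L^{\rm p}$, it solves \eqref{eq:CE} with drift built from $v[m^*]$, and $v^*=v[m^*]$. By \cref{def: sol MFG}, $(v^*,m^*)$ is a solution of \eqref{eq:MFG}, and \ref{item: convergence m} follows.

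\textbf{Delicate point.} The one step needing care is the identity $\widetilde m_n=m_{\Delta_n,\eps_n}[\widetilde m_n]$ on all of $\RR^d$, not only at the nodes indexed by $\Ix$. This uses that $m_{\Delta_n,\eps_n}[\widetilde m_n]$ vanishes at nodes outside $\ov B_\infty(0,\widetilde C^*)$, which holds by \cref{lem: compact supp m} with $\widetilde C^*$ independent of $\mu$. One must also check that the hypotheses of \cref{prop: stability CE} hold along the sequence: $\eps_n<1$ and $\Delta t_n,\eps_n$ small enough for $n$ large.

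\textbf{Step 3: whole-sequence convergence under \ref{hyp: monoticityassumption}.} Argue by contradiction. Suppose the whole sequence does not converge to the solution $(v^*,m^*)$. Then there are $\delta>0$ and a subsequence staying at distance at least $\delta$ from $(v^*,m^*)$, either in the $\CC([0,T];\P_1)$ distance, or in the uniform norm on some compact set, or in the weak topology tested against some fixed function. Applying Steps 1--2 to this subsequence produces a further subsequence converging to some solution of \eqref{eq:MFG}. By \cref{thm: unicità} that solution is $(v^*,m^*)$, which contradicts the choice of the subsequence.
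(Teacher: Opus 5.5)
Your proposal is correct and follows essentially the same route as the paper. You obtain compactness of $(\widetilde m_n)_n$ in $\CC([0,T];\P_1(\RR^d))$ by Ascoli--Arzel\`a, identify the limit by applying \cref{prop: convergence} with $\mu_n=\widetilde m_n$ together with the fixed-point identity $\widetilde m_n=m_{\Delta_n,\eps_n}[\widetilde m_n]$, and deduce convergence of the whole sequence from the uniqueness in \cref{thm: unicità}. You also spell out two points the paper leaves implicit: that this identity holds on all of $\RR^d$ by \cref{lem: compact supp m}, and the subsequence-of-a-subsequence argument that yields convergence of the full sequence.
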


\begin{proof}
For all $n$ sufficiently large, the existence of a solution $m_n \in \mathscr{D}_{\Delta_n}$ follows from \cref{thm: existence MFGd}, and we have $\widetilde{m}_n = m_{\Delta_n,\eps_n}[\widetilde{m}_n]$. Arguing as in \cref{prop: convergence}~\ref{item: conv m}, there exists $m^* \in \CC([0,T];\P_1(\RR^d))$ such that, up to some subsequence, $\widetilde{m}_n \to m^*$ in $\CC([0,T];\P_1(\RR^d))$ as $n\to\infty$. By \cref{prop: convergence}~\ref{item: conv m}, $m^*$ solves \eqref{eq:CE} with $\mu = m^*$, and therefore $(v[m^*],m^*)$ solves \eqref{eq:MFG}. Finally, assertions~\onlyref{item: convergence v,item: convergence m} follow respectively from \onlyref{item: conv veps in m,item: conv m} in \cref{prop: convergence}. If  \cref{hyp: monoticityassumption} is satisfied,  since \Cref{thm: unicità} implies the uniqueness of the solution to \eqref{eq:MFG}, the whole sequence converges to $(v^*,m^*)$.

\end{proof}

\section{Numerical Simulations}
We present numerical results to evaluate the performance of the proposed scheme.
Due to the nonlinear structure of the scheme, an iterative algorithm is required. Inspired by \cite{CarliniCoscetti26,HadikhanlooSilva}, we employ a Discrete Learning Value Iteration algorithm (DLVI), or fictitious play method, applied to the discretized MFG problem. 
Given an initial guess $\ov m^{(1)} \in \CC([0,T];\P_1(\RR^d))$, for every iteration $n\geq 1$,  we compute $v^{(n)}$ by solving the scheme \eqref{eq:HJBdiscr} given $\mu=\ov m^{(n)}$. Next, we compute $m^{(n+1)}$ with the scheme \eqref{eq: Ce scheme} and we extend it defining $\widetilde  m^{(n+1)}$ as in \eqref{eq: mu tilde tk}-\eqref{eq: mu tilde t}. 
Finally we update the average $\ov m^{(n+1)}=\left(1-\lambda_n\right) \ov m^{(n)}+\lambda_n \widetilde m^{(n+1)}$ for $\lambda_n \in [0,1]$. 
We define the errors
\begin{equation*}
     E_{1}(m,\nu):=\text{Int}_{ \GG_{\Delta x}}|m_{N_T}-\nu_{N_T}| \qquad  \text{and}\qquad    E_{\infty}(v,w):=\max_{i\in\ZZ^d}|v_{0,i}-w_{0,i}|,  
\end{equation*}
 for every  $m,\nu,v,w \in \BB(\GG_{\Delta})$, where \text{Int}$_{ \GG_{\Delta x}}$ denotes the approximation of the integral on $ \RR^d$ by using the rectangle rule on $ \GG_{\Delta x}$.
Given a tolerance $tol>0$,
DLVI stops at  $n>1$ if  $ E_{1}(m^{(n+1)},m^{(n)}) <tol$ and $E_{\infty}(v^{(n)},v^{(n-1)}) <tol$, 
and the solution returned by  the algorithm is  $(m^{(n+1)},v^{(n)})$. In the following simulations,  we initialize
the iterations of DLVI with $ \ov m^{(1)}(t,x)=m^*_0(x)$
and we denote by $(m^{\Delta},v^{\Delta})$ the solution computed. The convergence of this algorithm for \eqref{eq:MFGd} is not addressed here and is left for future work. As already outlined in \cite{CarliniSilvaZorkot}, it is necessary to derive an implementable version of the scheme and to approximate the integrals appearing in \eqref{eq: Ce scheme}. To this end, we adopt the area-weighted Lagrange-Galerkin  approximation analyzed in \cite[Section 6.1]{CarliniSilvaZorkot}, while maintaining our definition of $\mathsf{\Phi}_{k}^{\Delta,\epsilon}[\mu]$ in \eqref{Phi_mu_eps_k}.
We approximate the solution of three different MFG systems.
The first problem does not satisfy all the required assumptions. Nevertheless, it serves as a useful benchmark beyond the theoretical framework and, since the exact solution is known, it enables us to assess the accuracy of the proposed scheme.
In this case, we focus on the one-dimensional setting and DLVI is implemented with $\lambda_n=1$ for any iteration $n\geq 1$. In the second and third problem,
which are one and two-dimensional respectively, DLVI is implemented  with $\lambda_n =\frac{1}{n+1}$. In all the proposed examples, the time integrals appearing in \eqref{eq:infS} and  \eqref{Phi_mu_eps_k} are evaluated exactly. In  all the  simulations, we choose $\rho (x)=\frac{1}{\sqrt{2\pi}}e^{-|x|^2/2}$,
$tol =10^{-4}$ and $\Delta t=(\Delta x)^{4/5}/4$. 
The parameter $\eps>0$ is specified separately for each example and is always chosen to be of order $\sqrt{\Delta t}$. These choices of $\Delta t$ and $\varepsilon$ are consistent with the assumptions of the convergence result in \Cref{thm: convergence scheme}.

\subsection{Example with analytical solution}\label{testexaxt}
We consider a MFG system with running cost independent of the space variable: 
\[
\ell(t,x,a)=\frac{1}{2}|a-\widehat{\alpha}(t)|^2,
\qquad
\widehat{\alpha}(t)=
\alpha_1\mathbb{I}_{[0,\overline t)}(t) +\alpha_2\mathbb{I}_{[\overline t,T]}(t),
\quad \alpha_1,\alpha_2\in\mathbb R^d,\, \overline t\in[0,T],
\]
   Hamiltonian $H(t,x,p)=\sup_{a \in \RR^d}\{a\cdot p  - \ell (t,x,a) \}=\frac{1}{2}|p|^2+\widehat{\alpha}(t)\cdot p$,   coupling cost $  f(t,x,m)=\frac{1}{2}|x-\int_{\mathbb{R}^d} y m(y)\dd y|^2$, 
   terminal cost $g(x,m)= 0$, and   initial measure $m^*_0$  given as  the density of a Gaussian  with mean $\mu_0\in\RR^d$ and covariance  $\Sigma_0\in \RR^{d\times d}$. For simplicity, we  assume that $\Sigma_0$ is a diagonal matrix.
This problem does not satisfy \onlyref{hyp: compact set,hyp: initial measure,hyp:terminal cost,hyp: monoticityassumption}. Specifically, the coupling $f$ is neither equibounded nor Lipschitz in space, and therefore does not satisfy \ref{hyp:terminal cost}. 
Moreover, the control set $A=\mathbb{R}$ is unbounded, thus violating \ref{hyp: compact set}.   The initial distribution $m_0^*$ does not have compact support,  violating \ref{hyp: initial measure}. 
Finally, since $f$ is linear in the mean of $m$, the strict monotonicity condition in \ref{hyp: monoticityassumption} does not hold in general. Nevertheless, this problem is of particular interest because it admits a unique explicit solution $(v^*,m^*)$ (see Appendix~\ref{appendix: exact solution}), which allows us to study the numerical error rates of the proposed scheme.

The value function $v^*$ is given by
\begin{equation*}
    v^*(t,x)=\frac{1}{2}\Pi(t)x\cdot x+s(t)\cdot x+c(t) 
    \qquad \text{for all } (t,x)\in [0,T]\times\mathbb{R}^d,
\end{equation*}
where,  setting $p(t)=\tanh(T-t)$ for all $t\in [0,T]$,  
\begin{equation*}
\overline{Y}(t)=
\begin{cases}
\mu_0-t\alpha_1, & t<\overline{t},\\
\mu_0-\overline{t}\alpha_1-(t-\overline{t})\alpha_2, & t\geq\overline{t}, 
\end{cases}
\end{equation*}
and denoting by $I_d\in\mathbb{R}^{d\times d}$ the identity matrix, we have that 
\begin{equation*}
\begin{aligned}
&\Pi(t)=p(t)I_d,\quad  s(t)=-p(t)\overline{Y}(t),\\
&c(t)=-\int_t^T\frac{1}{2}p^2(r)|\overline{Y}(r)|^2\,\dd r+\int_t^T p(r)\widehat{\alpha}(r)\cdot\overline{Y}(r)\,\dd r +\int_t^T\frac{1}{2}|\overline{Y}(r)|^2\,\dd r.
\end{aligned}
\end{equation*} 
The explicit formula for the density $m^*$ is given by
 \begin{equation*}
     m^*(t,x)=\prod_{i=1}^dm_i^*(t,x) \qquad \text{for all } (t,x)\in (0,T)\times \RR^d,
 \end{equation*}
where, for every $t\in [0,T]$ and $i=1,\dots,d $, the density $m_i^*(t,\cdot)$ is a univariate Gaussian with mean $(\ov Y(t))_i$ and covariance $(\Sigma(t))_{i,i}$, with $\Sigma(t)=\left(\frac{\cosh(T-t)}{\cosh(T)}\right)^2 \Sigma_0\in \RR^{d\times d}$.
 In the following simulations, we consider the problem  in the one-dimensional spatial setting and with data  $T=0.5$, $\ov t=0.25$, $\alpha_1=2.5$, $\alpha_2=-2.5$, $\mu_0=0.1$, and $\Sigma_0=0.5$. The scheme is implemented with $\eps=\sqrt{\Delta t}$    on the spatial domain $[-3.5,3.5]$. Boundary conditions are imposed exactly, while linear extrapolation is used   instead of linear interpolation in \eqref{eq:HJBdiscr}, whenever the characteristic trajectory $x_i - \Delta t a$ exits the computational domain.  
In \Cref{fig: exact space} and \Cref{fig: exact timespace}, we report the computed density for $\Delta x = 1.25 \cdot 10^{-2}$, shown only on the reduced domain $[-2.5,2.5]$ for better illustration of the dynamics. In \Cref{fig: exact space}, the left panel shows the initial density, while the right panel displays the evolution of $m^{\Delta}_{5\cdot k}$ for  $k=0,\dots,\lfloor N_T/5 \rfloor$. In \Cref{fig: exact timespace}, we show $m^{\Delta}$ on the space-time domain, with a 2D representation on the left and a 3D representation on the right. We observe that the mass initially moves to the left and then changes direction to the right after time $\overline t$, with, moreover, a concentration effect.
 
  \begin{figure}[H]
        \centering 
            \includegraphics[width=4.5cm]{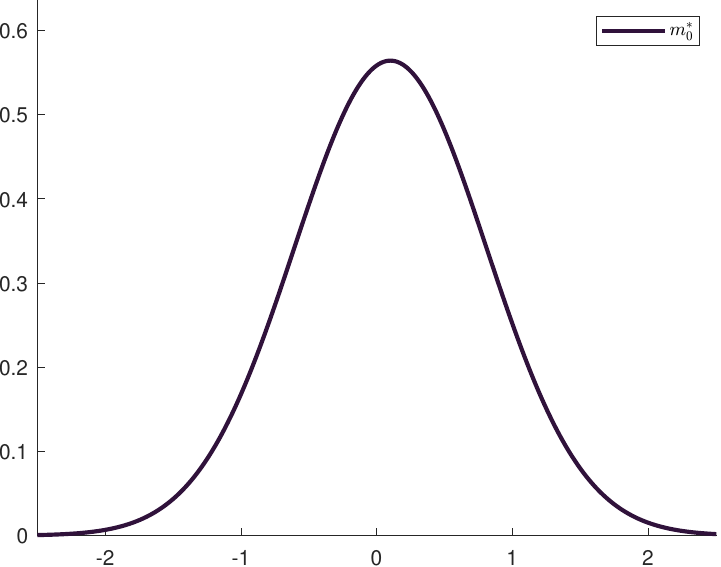}       
        \hspace{1cm}
            \includegraphics[width=4.5cm]{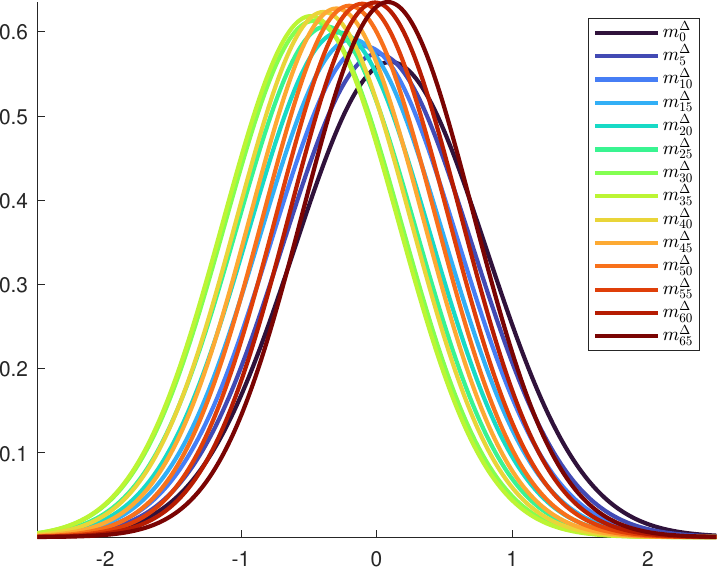}
        \caption{Example  \ref{testexaxt}. Initial density (left) and  time-evolution of the density (right).} \label{fig: exact space}
\end{figure}

  \begin{figure}[H]
        \centering 
            \includegraphics[width=5.5cm]{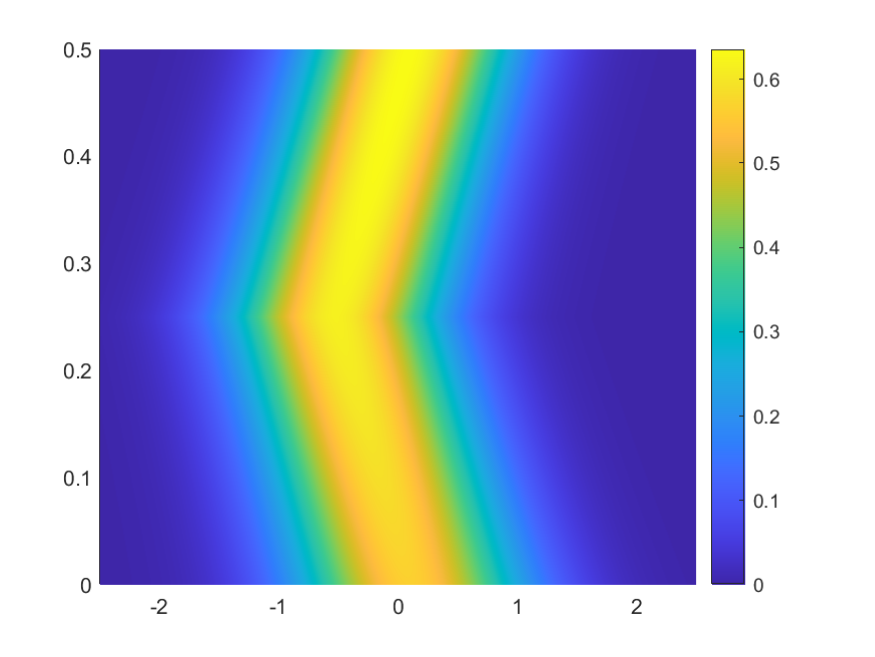}   
        \hspace{1cm}
            \includegraphics[width=5.5cm]{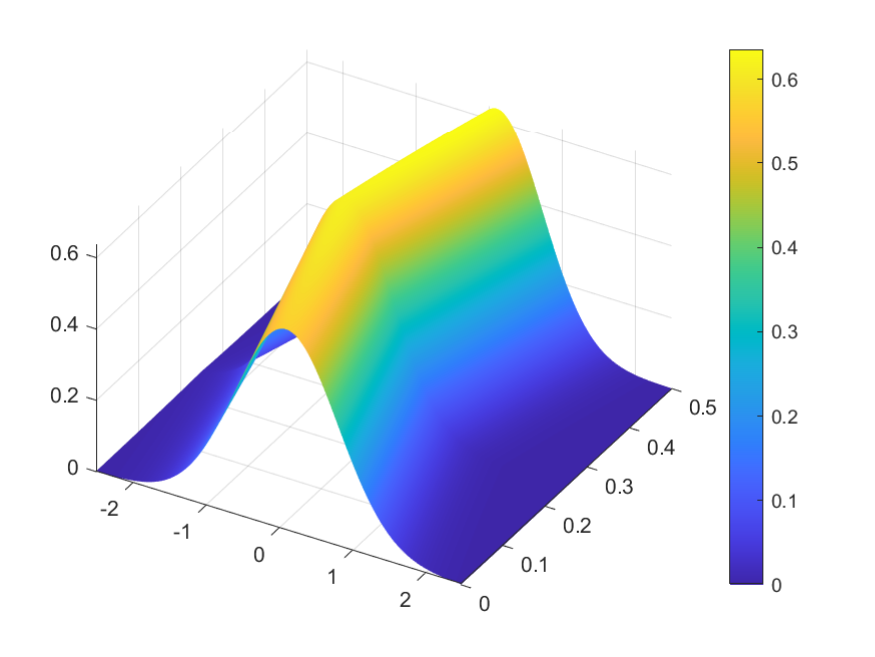}
        \caption{Example  \ref{testexaxt}. Density in the space-time domain in 2D view (left) and 3D view (right).} \label{fig: exact timespace}
\end{figure}

We investigate the behavior of the scheme according to whether or not the switching time $\overline{t}$ coincides with a time-grid node. Since the time-grid consists of $N_T+1$ nodes, $\overline t$ is a grid node whenever $N_T$ is even, whereas it lies strictly between two consecutive time nodes whenever $N_T$ is odd.
In the following simulations,  for each  refinement of $\Delta x$ (with $\Delta t=(\Delta x)^{4/5}/4$),  we consider two cases, corresponding to  $N_T$ even and odd, respectively. 
 When $\Delta t$ is prescribed, $T$ is not necessarily an integer multiple of $\Delta t$, and in our implementation  we construct a uniform time-grid on $[0,T]$ by setting $N_T=\lfloor T/\Delta t \rfloor$ or $N_T=\lceil T/\Delta t \rceil$, in order to guarantee that $N_T$ is even or odd,  and then redefining $\Delta t = T/N_T$.  

We  report in \Cref{table: test with analytical solution}, the errors $E_{1}(m^{\Delta},\hat m^*)$ and $E_{\infty}(v^{\Delta},\hat v^*)$, distinguishing between the even case (columns 2–3) and the odd case (columns 4–5). The errors for the value function are noticeably larger when $N_T$ is odd (column 5) than when $N_T$ is even (column 3), while both cases still exhibit a decreasing trend with an observed convergence rate of approximately $0.8$. To better understand the difference between these two cases, we report in~\Cref{fig: errU} the plot of $|v^*-v^{\Delta}|$, still computed with $\Delta x=1.25\cdot 10^{-2}$, shown for even $N_T$ (left) and odd $N_T$ (right), and  we  define 
\begin{equation*}
\ell_1(a)\coloneq \frac{1}{2}|a-\alpha_1|^2,
\qquad
\ell_2(a)\coloneqq\frac{1}{2}|a-\alpha_2|^2.  
\end{equation*}
For every $(t_k ,x_i)\in \GG_{\Delta }^*$, the exact optimal control $\alpha\colon [0,T]\to \RR^d$, minimizing the cost appearing in DPP over the time interval $[t_k,t_{k+1}]$ at $x_i$,  is approximated in the scheme by freezing the control to a single value $a  \in \RR^d$, which minimizes the discrete cost  in  \eqref{eq:HJBdiscr}. This leads, in particular, to the approximation
\begin{equation}\label{eq: integral approx HJB}
    \int_{t_k}^{t_{k+1}} \ell(s, \alpha(s))\,\dd s\simeq \int_{t_k}^{t_{k+1}} \ell(s,a)\,\dd s,
\end{equation}
where, since $\ell$ does not depend on $x$, this variable has been omitted in the notation. When the switching time $\overline t$ coincides with a grid point, each interval $[t_k,t_{k+1}]$ lies entirely within one of the two regimes $t \leq \overline t$ or $t \geq \overline t$, so that both 
terms in \eqref{eq: integral approx HJB} depend only on either $\ell_1$ or $\ell_2$. 
In contrast, when  the switching time $\overline t$ lies inside an interval $(t_k,t_{k+1})$, \eqref{eq: integral approx HJB} becomes 
\begin{equation*}
    \begin{aligned}
        \int_{t_k}^{\overline t} \ell_1(\alpha(s))\dd s
+
\int_{\overline t}^{t_{k+1}} \ell_2(\alpha(s))\dd s\simeq \int_{t_k}^{\overline t} \ell_1(a)\dd s
+
\int_{\overline t}^{t_{k+1}} \ell_2(a)\dd s.
    \end{aligned}
\end{equation*}
As a consequence, the scheme is forced to select a single value $a \in \mathbb{R}^d$ over the whole interval $[t_k,t_{k+1}]$, which must account for the two distinct regimes $\ell_1$ and $\ell_2$, whereas in the continuous setting the optimal control $\alpha(s)$ depends instantaneously on time and may switch at $\overline t$.
In accordance with this observation, an error appears at time $\overline t$ in the case of odd $N_T$, as shown in \Cref{fig: errU} (right). This error propagates backward in time due to the backward-in-time structure of the scheme, without accumulating over time steps, since it is generated only in the single interval containing $\overline t$.
When $N_T$ is even, no significant error arises at $\overline t$ from this approximation, see \Cref{fig: errU} (left). In this case, the numerical error is mainly concentrated near the boundary due to extrapolation effects and increases when moving backward in time, in agreement with the backward-in-time structure of the scheme.

Despite the difference in the approximation of the value function, \Cref{table: test with analytical solution} shows that the errors for the density are comparable in the even and odd cases (columns 2 and 4) and exhibit an observed convergence rate of approximately $1.35$.
This is not in contradiction with the previous observations, since the approximate density $m^{\Delta}$ is computed by solving the continuity equation driven by the discrete flow associated with $D(v^{\Delta}(t,\cdot) * \rho_{\varepsilon})$. In both cases, this quantity remains essentially the same in the region where the dynamics actually take place. Indeed, if  we compute the maximum difference over the space–time domain, for instance $[0,0.5]\times[-2.5,2.5]$, where the dynamics effectively occurs, the resulting error is $6.76 \cdot 10^{-5}$, which  explains why no discrepancy is observed at the level of the computed densities.

\begin{table}[H]
  
        \renewcommand\arraystretch{1.1}
        \centering
        \begin{tabular}{lllll}
                    \toprule
               $\Delta x$      & $ E_1(m^\Delta,\hat m^*)$  & $E_\infty(v^\Delta,\hat v^*)$  & $ E_1(m^\Delta,\hat m^*)$  & $E_\infty(v^\Delta,\hat v^*)$ \\ \midrule 
                $1.00 \cdot 10^{-1}$&    $1.19 \cdot 10^{-2}$  &   $2.66 \cdot 10^{-2}$ & $1.21 \cdot 10^{-2}$  &  $1.45 \cdot 10^{-1}$ \\ 
                  $5.00 \cdot 10^{-2}$  &  $3.58 \cdot 10^{-3}$  &  $1.46 \cdot 10^{-2}$   &  $3.63 \cdot 10^{-3}$  &  $8.95 \cdot 10^{-2}$   \\ 
                $2.50 \cdot 10^{-2}$  &   $2.72 \cdot 10^{-3}$ &  $8.19 \cdot 10^{-3}$  &  $2.46 \cdot 10^{-3}$  &  $4.81 \cdot 10^{-2}$  \\ 
                $1.25 \cdot 10^{-2}$  &  $1.13 \cdot 10^{-3}$  & $4.81 \cdot 10^{-3}$   &  $1.15 \cdot 10^{-3}$  &  $2.80 \cdot 10^{-2}$ \\ 
                $6.25\cdot 10^{-3}$  & $2.77 \cdot 10^{-4}$ &  $2.75 \cdot 10^{-3}$  &  $2.99 \cdot 10^{-4}$  &  $1.64 \cdot 10^{-2}$        \\  \bottomrule
        \end{tabular}
\caption{Example  \ref{testexaxt}.  space-grid steps (column 1), errors for even values of $N_T$ (columns 2-3) and errors for odd values of $N_T$ (columns 4-5).}
        \label{table: test with analytical solution}
    \end{table}

  \begin{figure}[H]
        \centering 
            \includegraphics[width=5.5cm]{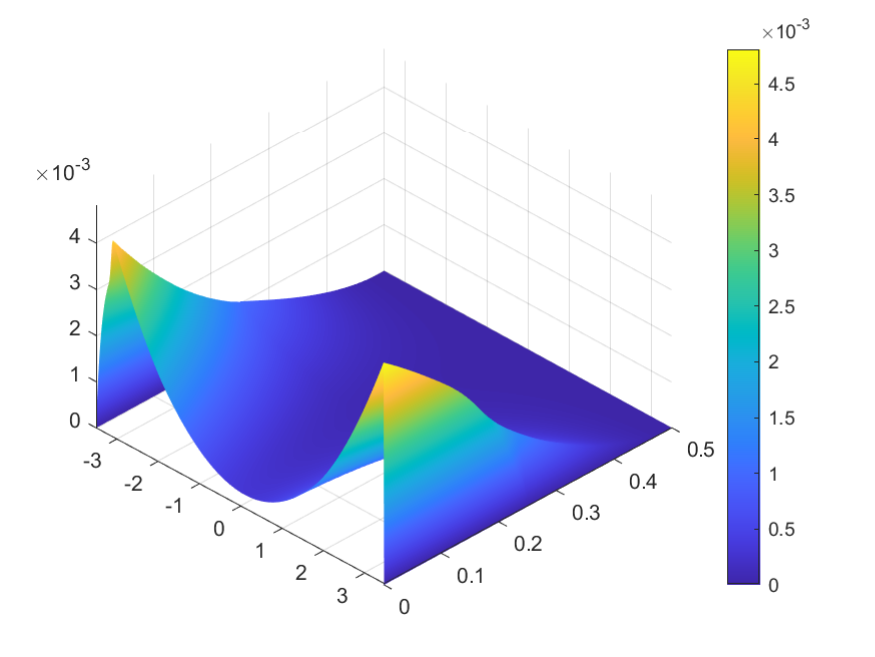} 
        \hspace{1cm}
            \includegraphics[width=5.5cm]{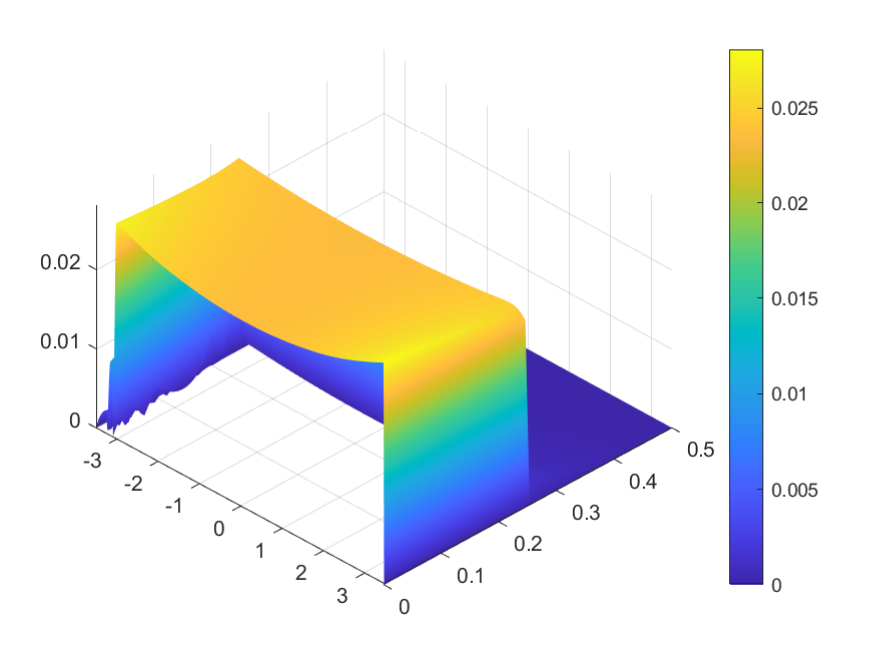}
        \caption{Example \ref{testexaxt}. Pointwise error $|v^*-v^{\Delta}|$ computed with $\Delta x=1.25\cdot 10^{-2}$  for even $N_T$ (left) and odd $N_T$ (right). } \label{fig: errU}
\end{figure}

\subsection{Examples with a target position and aversion to crowded regions}
We consider two MFG systems. The first one is set in dimension $d=1$ with $\ell$ being independent of $(t,x)$ (and hence we omit these variables in the notation below for $\ell$), while the second one is set in dimension $d=2$ with a time-state dependent running cost. In both cases, the coupling cost is given by 
\begin{equation*}
         f(t,x,m)=   \lambda \big( \mathbb{I}_{[0,\ov t)}(t)\min\{|x-\ov x|^2,R\}+  \mathbb{I}_{[\ov t,T]}(t)\min\{|x-\widetilde x|^2,R\}\big) +(r_{\sigma}*m)(x), \qquad  
\end{equation*}
where $\ov t\in [0,T]$, $ \lambda  ,R,\sigma > 0$, $\ov x,\widetilde x \in \RR^d$, $r_\sigma(x)=e^{-|x|^2/(2\sigma^2)}/\sqrt{2\pi \sigma}$. This MFG is inspired by similar test cases in \cite{CarliniSilvaZorkot,Lauriere21}, where the data are time-independent. Here, we extend this framework by considering time-dependent data with a discontinuity at $t=\ov t$. 
More precisely, in our example, the agents aim to reach the target point $\overline{x}$ during the time interval $[0,\overline{t})$, and then the target point $\widetilde{x}$ during $[\overline{t},T]$. These objectives are encoded by the first two components of the coupling cost $f$, while the third component penalizes the presence of other agents, thereby promoting congestion avoidance. The balance between these competing effects depends on the value of $\lambda$, as illustrated in the following simulations.

\subsubsection{Case $d=1$ with quadratic running cost}\label{test1d}
In this example, we consider the above problem in dimension $d=1$, with  running cost  independent of time and space $\ell(a)=\frac{1}{2}a^2$, control set  $A=[r_L,r_R]$ with $r_R>r_L$, terminal cost $g\equiv 0$, and  initial density
\begin{equation*} 
m^*_{0}(x)= \frac{\widetilde m(x)}{\int_{\RR}\widetilde  m(x)\dd x},\qquad \text{where} \qquad \widetilde m(x)=(1-|x|^2)^3\mathbb{I}_{\ov B(0,1)}(x).
\end{equation*} 
The Hamiltonian and its derivative with respect to $p$ are given by
\begin{equation*}
    H(p)=\begin{cases}
        r_Lp-\frac{1}{2}r_L^2 \quad  &\text{if } p<r_L, \\ 
        \frac{1}{2}p^2 \quad  &\text{if } r_L\leq p \leq r_R,\\ 
        r_Rp-\frac{1}{2}r_R^2 \quad  &\text{if } p>r_R, \\ 
    \end{cases}\qquad 
     D_pH(p)=\begin{cases}
        r_L\quad  &\text{if } p<r_L, \\ 
p \quad  &\text{if } r_L\leq p \leq r_R,\\ 
        r_R \quad  &\text{if } p>r_R. \\ 
    \end{cases}
\end{equation*}
This data satisfy \onlyref{hyp: compact set,hyp: initial measure,hyp: lagrangian new,hyp:terminal cost,hyp: monoticityassumption}.
We set $r_L=-2$,  $r_R=2$, $T=4$, $\overline t=2$, $R=36$, $\sigma=0.5$, $\overline x=1$, and $\widetilde x=-1$. In the following simulations, we test the scheme with $\Delta x=1.25\cdot10^{-2}$ and $\varepsilon=\sqrt{\Delta t}$ on the spatial computational domain $[-3,3]$, while the approximated densities are displayed on the restricted domain $[-2,2]$ to focus on the region where the density is effectively supported. We apply the scheme to solve the problem for $\lambda=2$ and $\lambda=\frac{1}{2}$. In \Cref{fig: m0 test1d}, the left panel shows the initial density $m_0^*$ (black line with circles) together with the  intermediate time approximated densities for $\lambda =2$ (red line with squares) and for $\lambda=\frac{1}{2}$ (blue line with asterisks). The same color and marker convention is preserved in the right panel, which displays the initial density $m_0^*$ together with the terminal time approximated densities $m^{\Delta}_{N_T}$ for the two values of $\lambda$. In \Cref{fig: test1d}, we report the space--time evolution of $m^{\Delta}$, with the left panel corresponding to $\lambda=2$ and the right panel corresponding to $\lambda=\frac{1}{2}$. 

In the case $\lambda=2$, \Cref{fig: m0 test1d} and \Cref{fig: test1d} (left) show that the coupling cost initially favors proximity to the target $\overline{x}$ during the time interval $[0,\overline{t}]$, and subsequently to the  target $\widetilde{x}$ over $[\overline{t},T]$.
During the first phase, the population moves coherently toward $\overline{x}$, progressively reducing its distance from the target. However, the agents do not reach $\overline{x}$ exactly, as a result of the trade-off between the running cost, which penalizes large controls, and the coupling cost. Indeed, reaching $\overline{x}$ exactly and then moving toward $\widetilde{x}$ would require a larger control effort. Since the agents forecast the  evolution of the game, they anticipate the switch of the target and begin deviating toward 
$\widetilde x$ before fully reaching $\ov x$.
During the second phase the agents converge to the final target $\widetilde{x}$ and eventually reach it. Once there, it is optimal to remain close to $\widetilde{x}$, since both the coupling cost and the running cost can be minimized by choosing a stationary dynamics.

In the case $\lambda=\frac{1}{2}$, as illustrated in \Cref{fig: m0 test1d} and \Cref{fig: test1d} (right),  the qualitative behavior with respect to the two target points remains the same. During the first time interval, the agents move toward the  target $\overline{x}$, while during the second they head toward the  target $\widetilde{x}$. However, in this case the aversion to crowded regions prevails over the attraction to the targets. As a result, the population does not concentrate during its evolution. Instead, it splits into two groups in order to reduce congestion while still progressing toward the prescribed targets.

  \begin{figure}[H]
        \centering 
            \includegraphics[width=4.5cm]{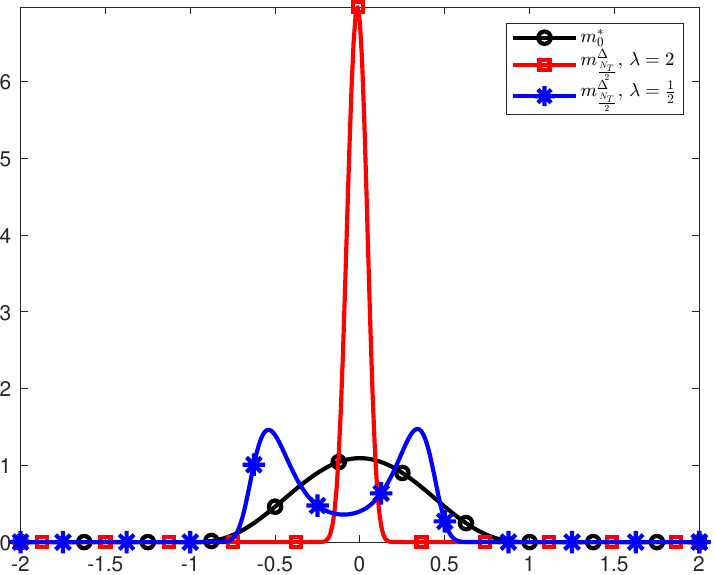} 
              \hspace{1cm}
        \includegraphics[width=4.5cm]{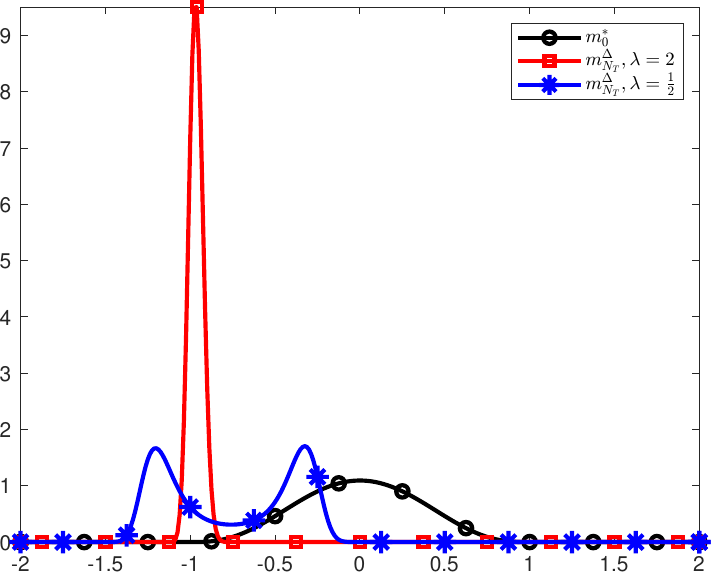} 
        \caption{Example  \ref{test1d} with  $A=[-2,2]$. Initial density $m_0^*$ and intermediate time approximations $m^{\Delta}_{N_T/2}$ for $\lambda=2$ and $\lambda=\frac{1}{2}$ (left). Initial density $m_0^*$ and terminal time approximations $m^{\Delta}_{N_T}$ for $\lambda=2$ and $\lambda=\frac{1}{2}$ (right). } \label{fig: m0 test1d}
\end{figure}

  \begin{figure}[H]
        \centering 
            \includegraphics[width=5.5cm]{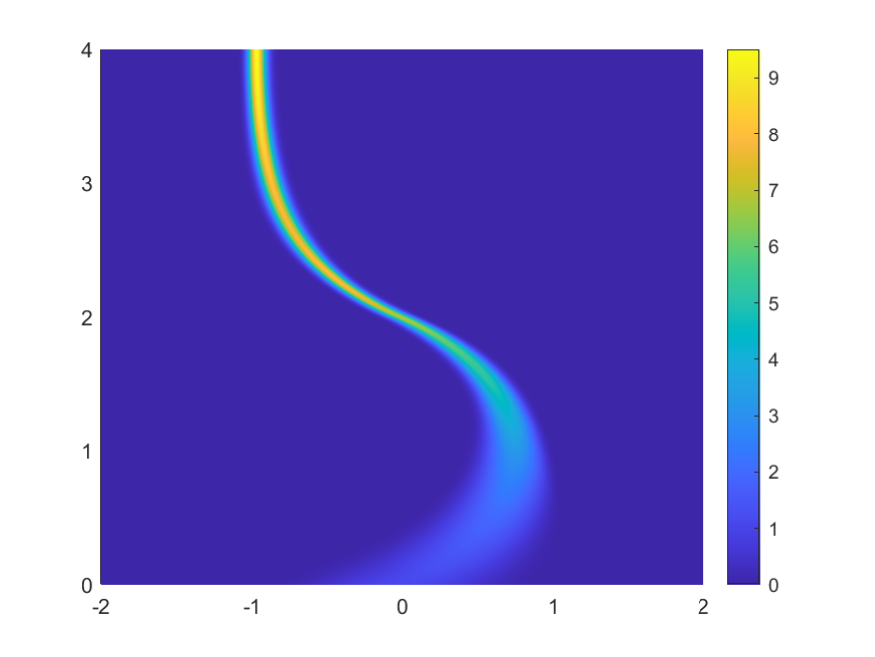}   
            \includegraphics[width=5.5cm]{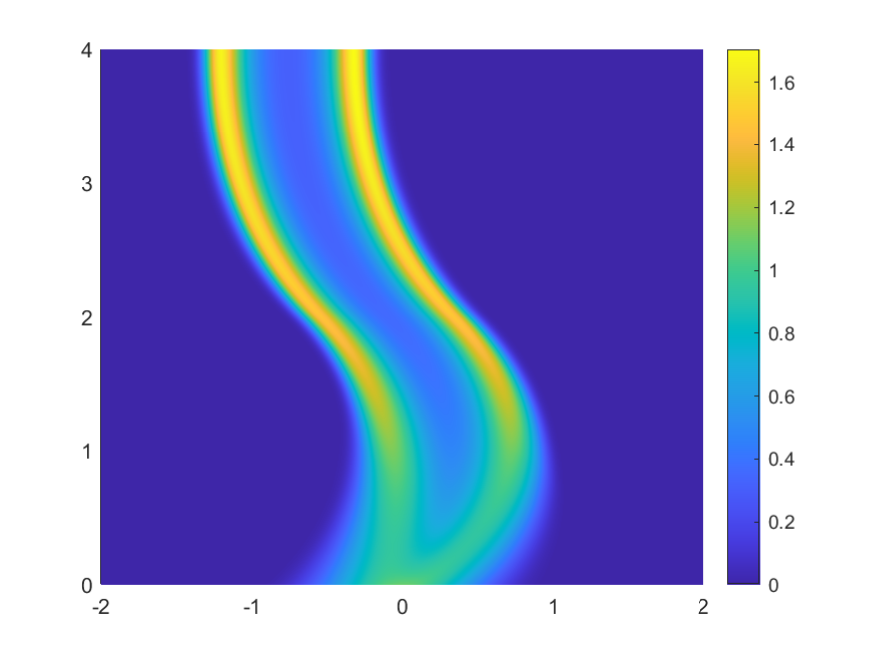}
        \caption{Example  \ref{test1d} with  $A=[-2,2]$. Density in the space-time domain  for $\lambda=2$ (left) and $\lambda=\frac{1}{2}$ (right).  } \label{fig: test1d}
\end{figure}

We now test the scheme using the same parameters as in the previous simulations, while modifying the control set $A$. More precisely, we first consider $A=[-2,0]$, corresponding to $r_L=-2$ and $r_R=0$, and then $A=[0,2]$, corresponding to $r_L=0$ and $r_R=2$.

In the case $A=[-2,0]$, \Cref{fig: test1d evolution neg} reports the evolution of the density for $\lambda=2$ (left) and $\lambda=\frac{1}{2}$ (right).  When $\lambda=2$, the agents initially located on the left side of the origin move toward the center and then remain there until the final time, whereas the agents initially located on the right side of the origin remain essentially stationary throughout the whole time interval.
Unlike the previous simulations, the agents no longer move first toward $\overline{x}$ and then toward $\widetilde{x}$. To better understand this behavior, we also display in  \Cref{fig: test1d drift neg} the discrete field
 $D_pH\left(D_x(\rho_{\varepsilon}*v^{\Delta}(t,\cdot))(x)\right),$
for $\lambda=2$ (left) and $\lambda=\frac{1}{2}$ (right), over the space--time domain $[-2,2]\times [0,T]$.
This field determines, up to a change of sign, the discrete trajectories \eqref{Phi_mu_eps_k}.
The discrete field  initially takes negative values in the negative part of the domain and for $t\geq \ov t$ becomes identically zero in the region where the dynamics is effectively supported, namely $[-1,1]$.
The corresponding dynamic is in agreement with the game. Indeed, agents, which are initially distributed around the origin,
can move  toward the first target $\overline{x}$  by exploiting positive drifts, corresponding to negative values of the displayed field. However, reaching the second target $\widetilde{x}$ from $\overline{x}$ would require negative drifts, corresponding to positive values of the field, which are not allowed by the choice of the control set $A$. 
Therefore, optimizing the first component of the coupling cost over the interval $[0,\overline{t}]$ would  move the agents too far from $\widetilde{x}$, resulting in a significantly larger coupling cost during the second time interval $[\overline{t},T]$. 
As a consequence, the optimal strategy is to remain close to the center, balancing the two competing objectives.
When $\lambda=\frac{1}{2}$, the dynamics is analogous to the case $\lambda=2$. 
However, since the aversion to crowded regions becomes the dominant cost, the field remains partially negative also in the positive part of the spatial domain, causing the population to split into two groups.

  \begin{figure}[H]
        \centering 
            \includegraphics[width=5.5cm]{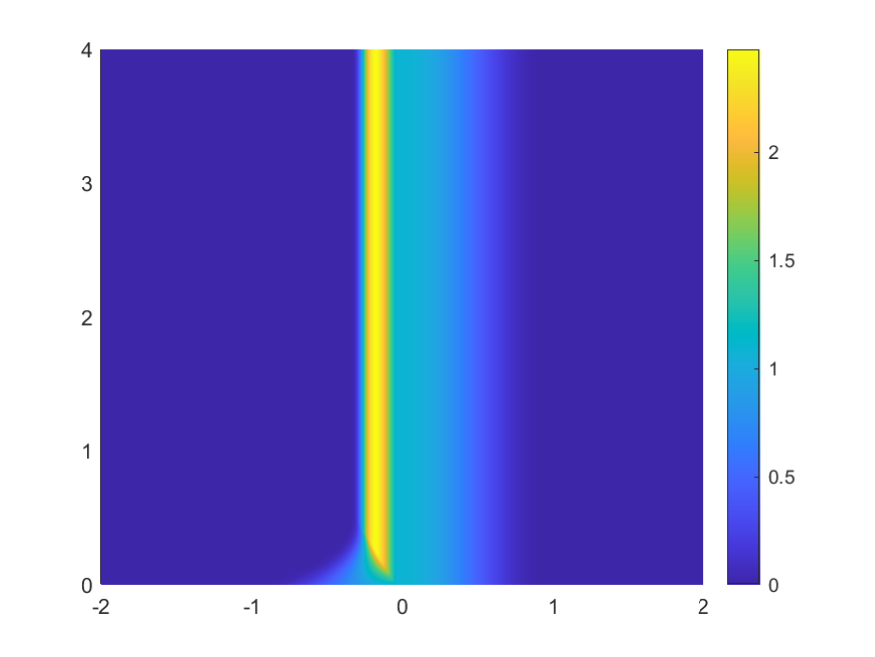}   
            \includegraphics[width=5.5cm]{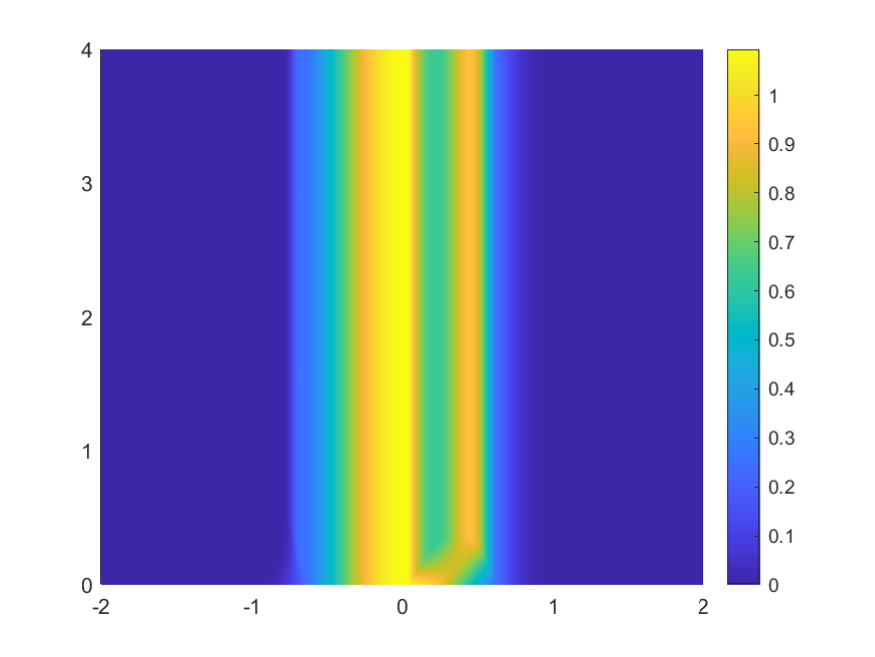} 
        \caption{Example  \ref{test1d} with  $A=[-2,0]$. Density in the space-time domain for $\lambda=2$ (left) and $\lambda=\frac{1}{2}$ (right).} \label{fig: test1d evolution neg}
\end{figure}

  \begin{figure}[H]
        \centering 
            \includegraphics[width=5.5cm]{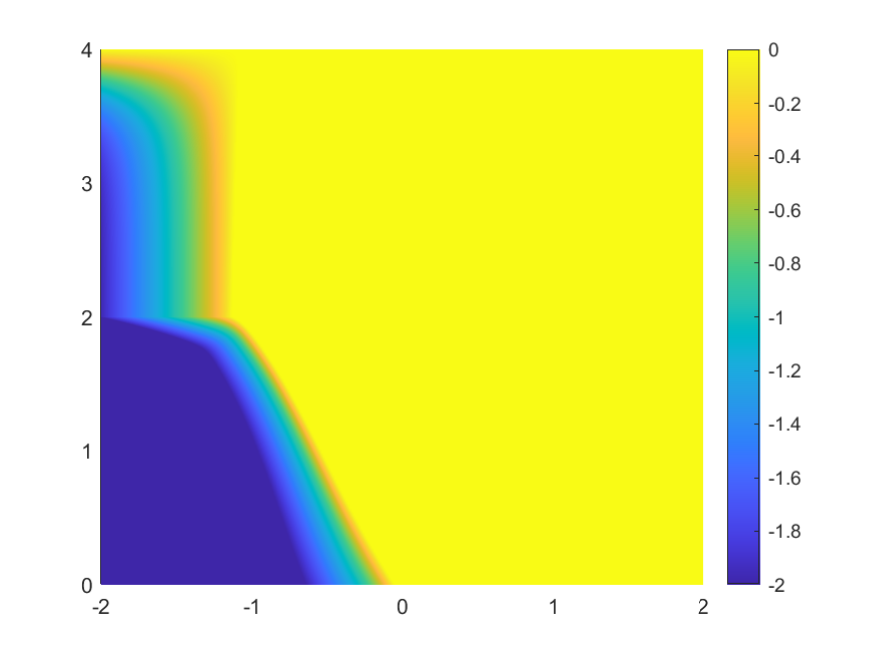}   
            \includegraphics[width=5.5cm]{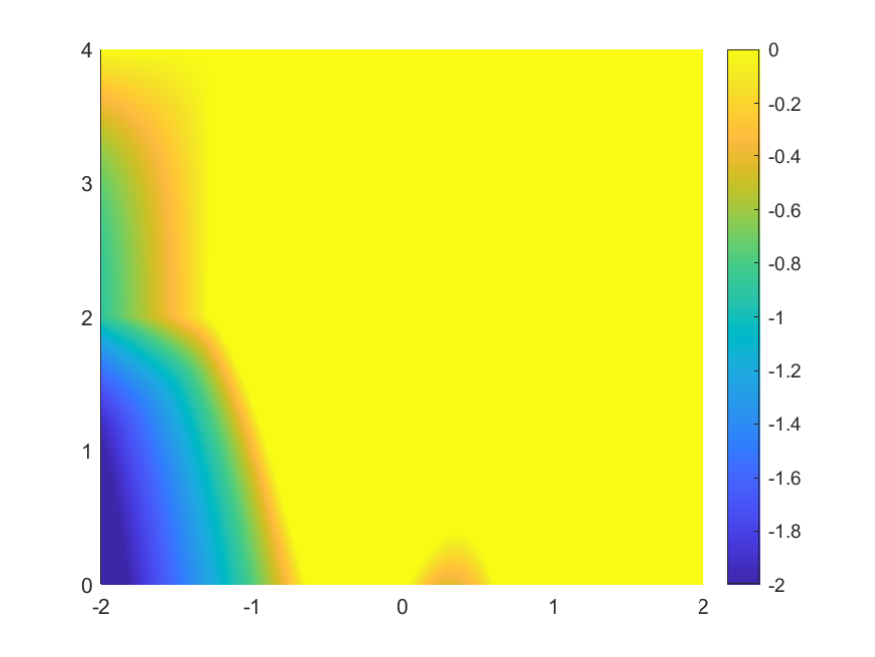} 
        \caption{Example  \ref{test1d} with  $A=[-2,0]$.  Field $
D_pH\left(D_x(\rho_{\varepsilon}*v^{\Delta}(t,\cdot))(x)\right)
$ in the space-time domain for $\lambda=2$ (left) and $\lambda=\frac{1}{2}$ (right).} \label{fig: test1d drift neg}
\end{figure}

In the case $A=[0,2]$, the evolution of the density is reported in \Cref{fig: test1d evolution pos}, while the corresponding discrete field
$D_pH\left(D_x(\rho_{\varepsilon}*v^{\Delta}(t,\cdot))(x)\right)$
is shown in \Cref{fig: test1d drift pos}. In both figures, the left panel refers to the case $\lambda=2$, whereas the right panel refers to the case $\lambda=\frac{1}{2}$.
In this setting, the agents cannot move toward $\overline{x}$, since negative controls are not admissible. Therefore, the optimal strategy during the first time interval $[0,\overline{t}]$ is to remain stationary in order to minimize the cost. During the second interval $[\overline{t},T]$, the agents move toward the second target $\widetilde{x}$, minimizing the coupling cost, since negative drifts, corresponding to positive values of the field,  allow them to move toward the left. As in the previous case, when $\lambda=\frac{1}{2}$, the aversion to crowded regions becomes dominant and, around the switching time $\overline{t}$, when the agents approach the point $x=0$, the population splits into two groups.

  \begin{figure}[H]
        \centering 
            \includegraphics[width=5.5cm]{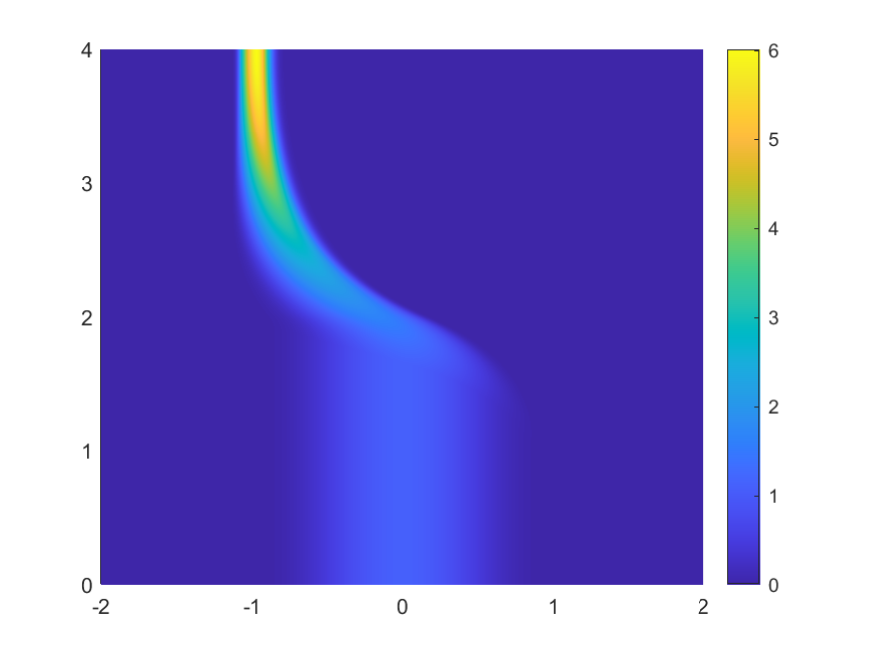}   
            \includegraphics[width=5.5cm]{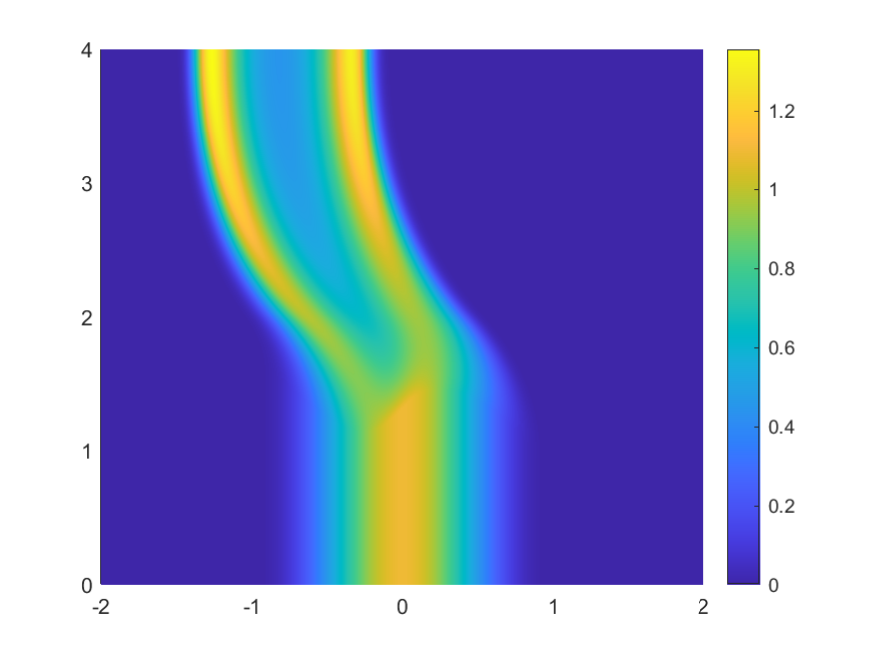} 
        \caption{Example  \ref{test1d} with  $A=[0,2]$. Density in the space-time domain for $\lambda=2$ (left) and $\lambda=\frac{1}{2}$ (right).} \label{fig: test1d evolution pos}
\end{figure}

  \begin{figure}[H]
        \centering 
            \includegraphics[width=5.5cm]{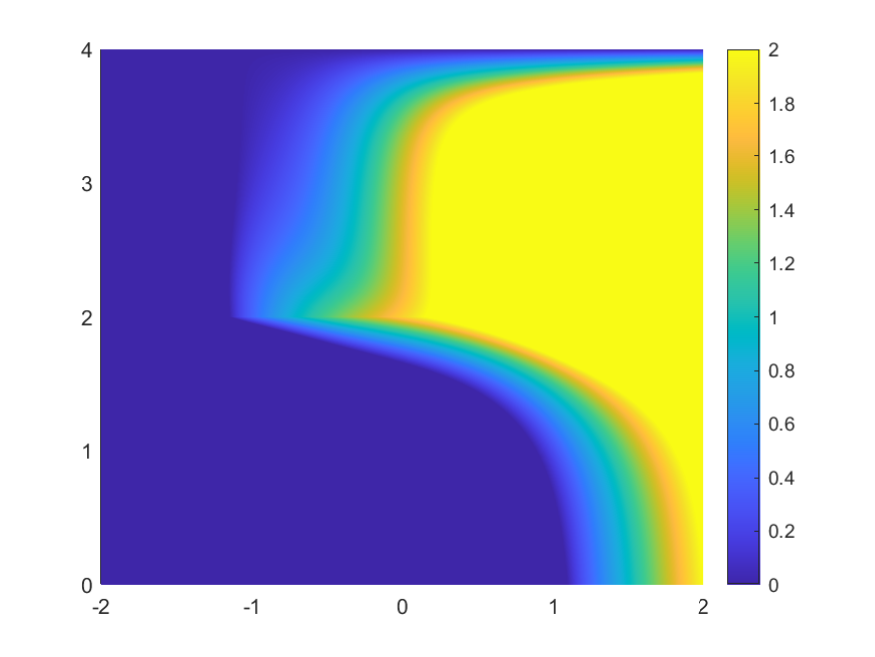}   
            \includegraphics[width=5.5cm]{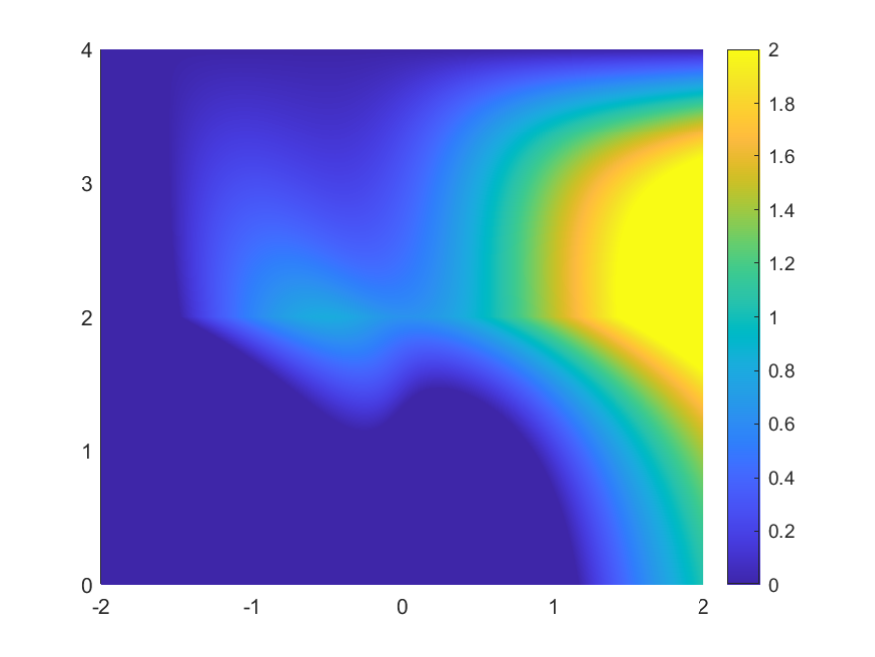} 
        \caption{Example  \ref{test1d} with  $A=[0,2]$. Field $
D_pH\left(D_x(\rho_{\varepsilon}*v^{\Delta}(t,\cdot))(x)\right)
$ in the space-time domain for $\lambda=2$ (left) and $\lambda=\frac{1}{2}$ (right).} \label{fig: test1d drift pos}
\end{figure}

\subsubsection{Case $d = 2$ with time-state dependent quadratic running cost}\label{test2d}
In this example, we consider the above problem in dimension $d=2$, with  running cost
\begin{equation*}
     \ell(t,x,a)=\frac{1}{2}\left|a-\mathbb{I}_{[0,\ov t)}(t)b(x)+\mathbb{I}_{[\ov t,T]}(t)b(x)\right|^2,
\end{equation*}
given the rotating velocity field
\begin{equation*}
     b(x)= (-x_2,x_1) \qquad \text{for all }x=(x_1,x_2)\in \RR^2,
\end{equation*}
terminal cost  $g(x,m)= \lambda  \min\{|x-\widetilde x|^2,R\}$, initial density
\begin{equation*} 
m^*_{0}(x)= \frac{\widetilde m(x)}{\int_{\RR^2}\widetilde m(x)\dd x}, \quad \text{where }\quad   \widetilde m(x)=(1-4|x|^2)^3\mathbb{I}_{\ov B(0,\frac{1}{2})}(x),
\end{equation*} 
and control set  $A=[-2,2]^2$. This data satisfy \onlyref{hyp: compact set,hyp: initial measure,hyp:terminal cost,hyp: monoticityassumption}, while $\ell$ does not satisfy all the required assumptions in \onlyref{hyp: lagrangian new}.
However, the drift $b$ could be regularized by multiplying the linear term by a suitable mollifier, chosen so that the running cost  remains unchanged within the selected spatial domain, and the computations carried out with the regularized data  yield the same results as those obtained with the unregularized one.
We set $T=4$, $\overline t=2$, $R=72$, $\sigma=0.5$, $\ov x=(-2,0)$ and $\widetilde x=(2,0)$. In the following simulations, we test the scheme with $\Delta x=5.00\cdot10^{-2}$ and $\varepsilon=4\sqrt{\Delta t}$ on the spatial  domain $[-3,3]^2$. We consider two values of the parameter $\lambda$, namely $\lambda=1$ and $\lambda=\frac{1}{2}$. 

In the case  $\lambda=1$, as shown in \Cref{fig: test2d} (left), the population initially starts concentrated around the origin $(0,0)$ and during the first time interval $[0,\overline{t}]$, the agents are attracted toward the  target $\overline{x}$, while the presence of the rotational field $b(x)$ induces a clockwise rotation of the dynamic. 
Similarly to the one-dimensional case, the agents do not reach the first target exactly, since their ability to anticipate the future evolution of the game allows them to change the direction of motion toward the second target before reaching the first one. 
During the second time interval $[\overline{t},T]$, the agents move toward the  target $\widetilde{x}$ with a counterclockwise rotation. 
Near the second target $\widetilde{x}$, a slight overshoot in the vertical direction can also be observed at the terminal time. This phenomenon results from the competition between the coupling cost, which tends to minimize the distance from the target, and 
the rotational field $b(x)$, which induces an upward drift.

For $\lambda=\frac{1}{2}$, as illustrated in \Cref{fig: test2d} (right), the qualitative structure of the dynamics remains unchanged. However, the population is less concentrated than in the case $\lambda=1$, as reflected by the color scales in the figures. This is because the third component of the coupling cost becomes dominant, thereby strengthening the repulsive interaction among the agents.

 \begin{figure}[H]
        \centering 
            \includegraphics[width=5.5cm]{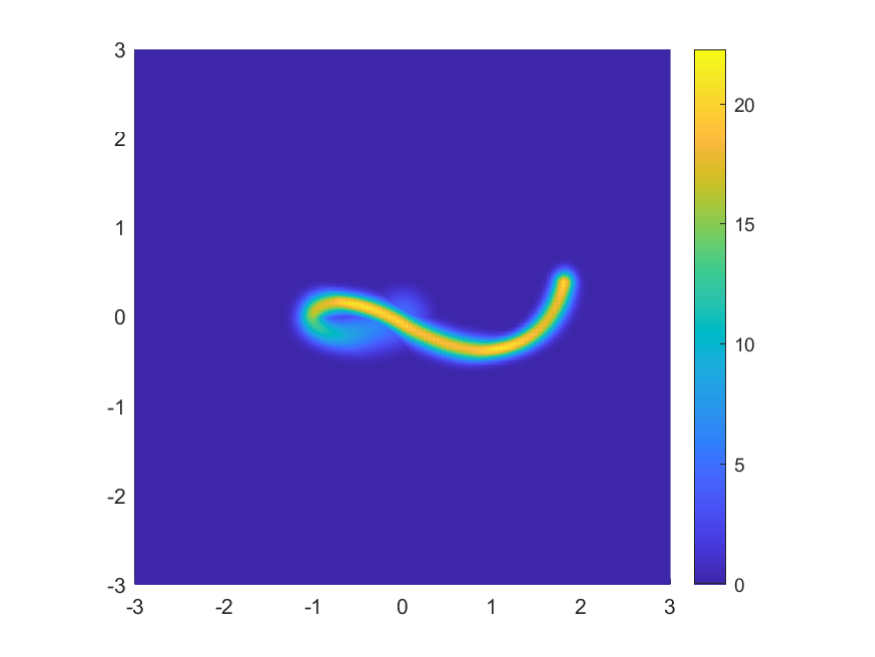}   
            \includegraphics[width=5.5cm]{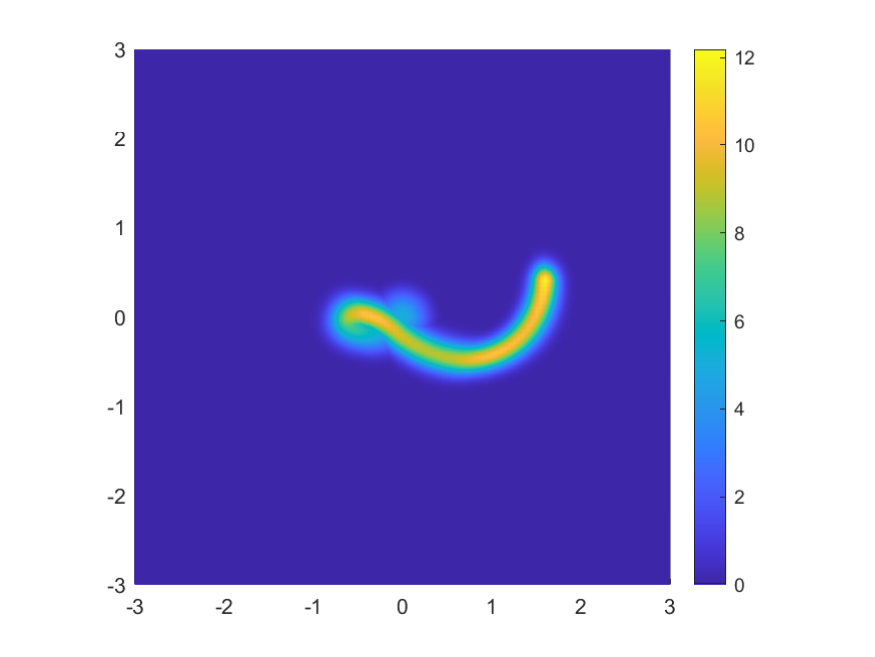}
        \caption{Example  \ref{test2d}. Time-evolution (2D view) of the density, $\lambda =1$ (left) and $\lambda =\frac{1}{2}$ (right).} \label{fig: test2d}
\end{figure}

\appendix
\section{Appendix}
This appendix collects the matrix results and properties of  Clarke's generalized Jacobian used in the proofs of \cref{prop: stability CE}.

\subsection{Matrix properties}
We first establish a first-order expansion of the determinant around the identity. 

\begin{lem}\label{lem: trace} 
 Let $A\in L^1([0,T]; \RR^{d \times d})$. Then, for every $t_1,t_2\in [0,T]$ with $t_2\geq t_1$ and  $(t_2-t_1)$ sufficiently  small, 
            \begin{equation*}\label{eq: det inver}
             \left | \det \left (I+\int_{t_1}^{t_{2}}A(s)\dd s\right )\right |^{-1}=1-\int_{t_1}^{t_{2}}{\rm tr }(A(s))\dd s+ \mathcal{O}\left (\left (\int_{t_1}^{t_{2}}||(A(s)||\dd s\right )^2\right ). 
        \end{equation*}
 \end{lem}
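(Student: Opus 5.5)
We set $N\coloneqq\int_{t_1}^{t_2}A(s)\dd s$ and $\eta\coloneqq\int_{t_1}^{t_2}\|A(s)\|\dd s$, and reduce everything to a statement about the single matrix $N$. Since $\|N\|\leq\eta$ and $A\in L^1$, absolute continuity of the integral gives $\eta\to 0$ as $t_2-t_1\to 0$. So "$(t_2-t_1)$ sufficiently small" may be read as "$\eta\leq\eta_0$" for a fixed $\eta_0\in(0,1)$ depending only on $d$. The claim then becomes the purely algebraic estimate
\begin{equation*}
\bigl|\det(I+N)\bigr|^{-1}=1-{\rm tr}(N)+\mathcal{O}(\|N\|^2)\qquad\text{for }\|N\|\leq\eta_0,
\end{equation*}
with a constant depending only on $d$. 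Since ${\rm tr}$ is linear and integrable, ${\rm tr}(N)=\int_{t_1}^{t_2}{\rm tr}(A(s))\dd s$. Also $\mathcal{O}(\|N\|^2)\subseteq\mathcal{O}(\eta^2)$, so this gives the lemma.

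For the algebraic estimate, we first expand $\det(I+N)$. It is a polynomial in the entries of $N$; in the Leibniz expansion only the identity permutation contributes terms of degree at most one, namely $1+{\rm tr}(N)$. Every other monomial has degree between $2$ and $d$ in the entries, each bounded by $\|N\|$. Hence
\begin{equation*}
\det(I+N)=1+{\rm tr}(N)+R(N),\qquad |R(N)|\leq c_d\|N\|^2\quad\text{for }\|N\|\leq 1.
\end{equation*}
Equivalently, one could use smoothness of $\det$ at $I$ together with $D\det(I)[N]={\rm tr}(N)$ and a second-order Taylor bound on the unit ball.

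Next we invert. Since $|{\rm tr}(N)|\leq d\|N\|$, choosing $\eta_0$ small gives $|{\rm tr}(N)+R(N)|\leq 1/2$. Then $\det(I+N)\in[1/2,3/2]$ is positive, so the absolute value can be dropped. The elementary bound $|(1+z)^{-1}-1+z|\leq 2|z|^2$ for $|z|\leq 1/2$, applied with $z={\rm tr}(N)+R(N)$, yields
\begin{equation*}
\det(I+N)^{-1}=1-{\rm tr}(N)-R(N)+\mathcal{O}(|z|^2)=1-{\rm tr}(N)+\mathcal{O}(\|N\|^2),
\end{equation*}
because $|z|\leq(d+c_d)\|N\|$. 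This concludes the plan.

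There is no real obstacle. The only points needing care are: the determinant's sign (handled by the positivity for small $\eta$), and keeping the remainder constant dependent only on $d$ and not on $A$. The latter matters because in \cref{prop: stability CE} the lemma is applied uniformly with $\eta=\mathcal{O}(\sqrt{\Delta t})$ via \eqref{eq:N_stima}, yielding the $\mathcal{O}(\Delta t)$ in \eqref{eq:det_O}.
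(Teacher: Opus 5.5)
Your proof is correct and takes the same route as the paper: first the first-order expansion $\det(I+N)=1+{\rm tr}(N)+\mathcal{O}(\|N\|^2)$, then inversion via $(1+z)^{-1}=1-z+\mathcal{O}(|z|^2)$ for $|z|$ small. Your version is more explicit. You prove the remainder bound through the Leibniz expansion, drop the absolute value by noting the determinant is positive, and state the smallness condition as $\int_{t_1}^{t_2}\|A(s)\|\dd s\leq\eta_0$ with constants depending only on $d$, which is the uniformity actually used when the lemma is applied in \cref{prop: stability CE}.
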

\begin{proof}
Set $B\coloneqq \int_{t_1}^{t_2}A(s)\dd s$. By linearity of the trace,
${\rm tr }(B)=\int_{t_1}^{t_2}{\rm tr }(A(s))\dd s$, while
$\|B\|\le \int_{t_1}^{t_2}\|A(s)\|\dd s$.
The first-order expansion of the determinant yields
$\det(I+B)=1+{\rm tr }(B)+\mathcal O(\|B\|^2)$.
Since $A\in L^1([0,T];\mathbb R^{d\times d})$, for $(t_2-t_1)$ sufficiently small one has
$|{\rm tr }(B)|+\mathcal O(\|B\|^2)<1$. Hence, applying the expansion
$\frac{1}{|1+\alpha|}=1-\alpha+\mathcal O(\alpha^2)$ with
$\alpha={\rm tr }(B)+\mathcal O(\|B\|^2)$ gives
    \begin{equation*}
|\det(I+B)|^{-1}=1-{\rm tr }(B)+\mathcal O(\|B\|^2),
    \end{equation*}
which yields the desired estimate.
\end{proof} 

We recall the following matrix trace estimate from \cite[Lemma 1]{algebraicmatrix}.
\begin{lem}\label{lem: algebraicmatrix}
    Let $K,S \in \RR^{d\times d}$. Assume that $K$ is symmetric and positive semidefinite, and $S$ is symmetric, then
    \begin{equation*}
        \lambda_{\min}(S){\rm tr}(K) \leq {\rm tr} (KS)\leq \lambda_{\max}(S){\rm tr}(K),
     \end{equation*}
     where  $\lambda_{\min}(S)$ and $ \lambda_{\max}(S)$ denote the smallest and largest eigenvalues of $S$, respectively.
\end{lem}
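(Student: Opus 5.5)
The plan is to diagonalize $S$ and reduce the claim to a weighted sum of its eigenvalues. Since $S$ is symmetric, the spectral theorem gives an orthogonal matrix $Q$ and a diagonal matrix $\Lambda={\rm diag}(\lambda_1,\dots,\lambda_d)$, with the $\lambda_i$ the eigenvalues of $S$, such that $S=Q\Lambda Q^{\mathsf T}$. By cyclicity of the trace,
\begin{equation*}
{\rm tr}(KS)={\rm tr}(KQ\Lambda Q^{\mathsf T})={\rm tr}(Q^{\mathsf T}KQ\,\Lambda)={\rm tr}(\widetilde K\Lambda),\qquad \widetilde K\coloneqq Q^{\mathsf T}KQ .
\end{equation*}
Because $\Lambda$ is diagonal, this reduces to
\begin{equation*}
{\rm tr}(KS)=\sum_{i=1}^d \widetilde K_{ii}\lambda_i .
\end{equation*}

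Next I use the properties of $\widetilde K$. It is symmetric positive semidefinite, being orthogonally congruent to $K$, so its diagonal entries satisfy $\widetilde K_{ii}=e_i^{\mathsf T}\widetilde K e_i\geq 0$. Moreover ${\rm tr}(\widetilde K)={\rm tr}(K)$ by cyclicity. Hence ${\rm tr}(KS)$ is a linear combination of the eigenvalues $\lambda_i$ with nonnegative weights $\widetilde K_{ii}$ summing to ${\rm tr}(K)$.

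Finally, bound each term. Since $\lambda_{\min}(S)\le\lambda_i\le\lambda_{\max}(S)$ and $\widetilde K_{ii}\ge 0$,
\begin{equation*}
\lambda_{\min}(S)\,\widetilde K_{ii}\le \widetilde K_{ii}\lambda_i\le \lambda_{\max}(S)\,\widetilde K_{ii}.
\end{equation*}
Summing over $i$ and using $\sum_i \widetilde K_{ii}={\rm tr}(K)$ gives both inequalities of the lemma.

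No step is a genuine obstacle. The one point needing care is the nonnegativity of the diagonal entries $\widetilde K_{ii}$, which is where positive semidefiniteness of $K$ enters; it holds because $x^{\mathsf T}\widetilde K x=(Qx)^{\mathsf T}K(Qx)\ge 0$ for every $x$.
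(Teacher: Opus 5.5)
Your proof is correct. Diagonalizing $S=Q\Lambda Q^{\mathsf T}$ gives ${\rm tr}(KS)=\sum_i\widetilde K_{ii}\lambda_i$ with $\widetilde K=Q^{\mathsf T}KQ$. Together with $\widetilde K_{ii}\geq 0$ and $\sum_i\widetilde K_{ii}={\rm tr}(K)$, this yields both inequalities at once.

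The paper does not prove this lemma; it imports it directly from \cite[Lemma 1]{algebraicmatrix}. Your argument is therefore a short self-contained replacement for that citation rather than a competing route. It also shows slightly more than stated: symmetry of $K$ plays no role, since only the nonnegativity of the quadratic form $x\mapsto x^{\mathsf T}Kx$ enters, through $\widetilde K_{ii}=(Qe_i)^{\mathsf T}K(Qe_i)$. This comfortably covers the paper's single use of the lemma in Step 3 of the proof of \cref{prop: stability CE}. There $K=B_p$, $S=S_k(x)$, and only the upper bound is needed.
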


\subsection{Clarke's Generalized Jacobian }\label{appendix: generalized g}

We briefly recall the definition of Clarke's Generalized Jacobian and some useful properties, and refer to \cite{Warga,Clarke,ClarkeLedyaevStern} for a more comprehensive treatment.
\begin{defin}
 Let $F\colon\RR^d \to \RR^m$ written in terms of component functions as $F(x)=(F_1(x),\cdots,F_m(x))$. We assume that any $F_i\colon\RR^d \to \RR$ is Lipschitz near a given point $x \in \RR^d$. We define  the generalized Jacobian as
  \begin{equation*}
      \partial F(x)\coloneqq \co \left (\left\{ \lim_{n \to \infty} DF(x_n) \, \middle| \, x_n \to x, \, x_n \notin \Omega_F\right \}\right ), 
  \end{equation*}
  where   ``$\co$'' denotes the convex hull and  $\Omega_F\coloneqq \{x \in \RR^d \, | \,  F \text{ is not differentiable at } x\}$.
\end{defin}
The following proposition collects the basic structural properties of Clarke's generalized Jacobian, including convex compactness, linearity, and the relation to the classical derivative.

\begin{prop}\label{prop: generalized Jacobian elementary properties}
Let $F\colon\RR^d\to\RR^m$ be locally Lipschitz near $x$.
\begin{enumerate}[label={\rm(\roman*)},     ref={\rm(\roman*)}]
\item \label{item: Clarke convex set} $\partial F(x)$ is a nonempty convex compact subset of $\RR^{m\times d}$.
\item \label{item: Clarke linear} If $L\colon\RR^d\to\RR^m$ is linear, then
$$
\partial(L-F)(x)=L-\partial F(x).
$$
\item \label{item: Clarke diff} If $F$ is differentiable at $x$, then
$DF(x)\in\partial F(x)$.
\item  \label{item: Clarke cont diff} If $F$ is continuously differentiable at $x$, then
$\partial F(x)=\{ DF(x)\}$.
\end{enumerate}
\end{prop}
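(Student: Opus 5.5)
The plan is to argue directly from the definition of the generalized Jacobian,
\[
\partial F(x)=\co\bigl(\partial_B F(x)\bigr),\qquad
\partial_B F(x)\coloneqq\Bigl\{\lim_{n\to\infty}DF(x_n)\,\Bigm|\,x_n\to x,\ x_n\notin\Omega_F\Bigr\},
\]
and to prove the four items in order. Each item reduces to a statement about the limiting set $\partial_B F(x)$, after which we pass to convex hulls.

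\textbf{Item (i).} Let $L_F$ be a Lipschitz constant of $F$ on a ball $B(x,r)$. At every differentiability point $y\in B(x,r)$ we have $\|DF(y)\|\le L_F$. By Rademacher's theorem $\Omega_F\cap B(x,r)$ has measure zero, so we can pick $x_n\to x$ with $x_n\notin\Omega_F$. The sequence $DF(x_n)$ is bounded, and Bolzano--Weierstrass gives a convergent subsequence, so $\partial_B F(x)\neq\emptyset$.

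Next, $\partial_B F(x)$ is contained in the closed ball of radius $L_F$ in $\RR^{m\times d}$. It is also closed, by a diagonal argument: if $M_k\in\partial_B F(x)$ and $M_k\to M$, choose $y_k\notin\Omega_F$ with $|y_k-x|<1/k$ and $\|DF(y_k)-M_k\|<1/k$. Then $y_k\to x$ and $DF(y_k)\to M$, so $M\in\partial_B F(x)$.

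Hence $\partial_B F(x)$ is compact. In the finite-dimensional space $\RR^{m\times d}$, the convex hull of a compact set is compact (Carathéodory's theorem). Therefore $\partial F(x)$ is nonempty, convex and compact.

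\textbf{Item (ii).} The function $L-F$ is differentiable exactly where $F$ is, since $L$ is linear. So $\Omega_{L-F}=\Omega_F$, and at such points $D(L-F)(y)=L-DF(y)$, where $L$ is identified with its matrix. Taking limits gives $\partial_B(L-F)(x)=L-\partial_BF(x)$. Since the map $M\mapsto L-M$ is affine, it commutes with taking convex hulls, which yields (ii).

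\textbf{Item (iv).} Suppose $F$ is $\CC^1$ on a neighbourhood of $x$. Then every $x_n\to x$ eventually lies outside $\Omega_F$, and $DF(x_n)\to DF(x)$ by continuity of $DF$. Hence $\partial_BF(x)=\{DF(x)\}$, and its convex hull is the same singleton.

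\textbf{Item (iii).} This is the main obstacle. Differentiability at the single point $x$ does not control $DF(x_n)$ at nearby points $x_n$. Moreover, the constant sequence $x_n=x$ may not be usable in the way one wants: $x\notin\Omega_F$, so it is admissible, and it does give $DF(x)\in\partial_BF(x)$ directly. Under the literal definition above, which allows $x_n=x$, the claim $DF(x)\in\partial F(x)$ is therefore immediate.

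If instead one insists on $x_n\neq x$, or on $x_n$ avoiding a prescribed null set (the equivalent Clarke formulation), one needs Clarke's argument. Fix a direction $v$. For the $i$-th component, use
\[
\nabla F_i(x)\cdot v\le F_i^\circ(x;v)=\max\{\zeta\cdot v \mid \zeta\in\partial F_i(x)\},
\]
where $F_i^\circ$ is the Clarke directional derivative. This is proved via the mean value theorem along segments combined with Fubini's theorem, so as to avoid the null set. The inequality gives $\nabla F_i(x)\in\partial F_i(x)$ for each component.

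Passing from components to the full Jacobian is subtler, because $\partial F(x)$ is in general strictly smaller than the product of the componentwise sets $\partial F_i(x)$. For this I would cite \cite{Clarke} rather than reprove it. The plan is therefore to present the direct argument via the constant sequence, and to refer to \cite{Clarke} for the version that is robust to null sets.
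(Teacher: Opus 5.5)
Your argument is correct. It differs from the paper only in that the paper gives no proof here: the proposition is recalled as standard background, with a pointer to \cite{Warga,Clarke,ClarkeLedyaevStern}.

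You instead verify each item directly from the definition as the paper states it:
\begin{itemize}
\item for (i), Rademacher and Bolzano--Weierstrass give nonemptiness, the local Lipschitz bound and a diagonal argument give compactness of the set of limiting Jacobians, and Carath\'eodory gives compactness of the convex hull;
\item for (ii), you use $\Omega_{L-F}=\Omega_F$ together with the fact that affine maps commute with convex hulls;
\item for (iv), you use continuity of $DF$ near $x$;
\item for (iii), you use the constant sequence.
\end{itemize}

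What your route buys is self-containedness and a useful observation. For the definition as written, (iii) is immediate: $x\notin\Omega_F$, so $x_n=x$ is an admissible sequence.

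What the citation route buys is the identification of this $\partial F$ with the object for which the chain rule (\cref{thm: chain rule generalized J}) and the integral inclusion (\cref{lem: differentiation under the integral sign}) are proved. Both are used in Step~2 of the proof of \cref{prop: stability CE}. Nothing is lost by your shortcut, because the classical definition also does not forbid $x_n=x$, and in any case it is insensitive to excluding null sets.

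Consequently, your second, ``null-set-avoiding'' variant of (iii) is not needed. Your remark about it is nonetheless accurate: for $m>1$, that robustness cannot be assembled row by row from the scalar gradient formula, and it is a genuinely separate result.
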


We recall the chain rule theorem for  Lipschitz functions (see  \cite[Theorem 4]{imbert2002support}).
\begin{theo}\label{thm: chain rule generalized J}
     Let $F\colon\RR^d \to \RR^m $ be Lipschitz near $x \in \RR^d$  and $G\colon\RR^m \to \RR^k$  be Lipschitz near $F(x) \in \RR^m$, then 
      \begin{equation*}
          \partial (G\circ F)(x)\subseteq \co \{\partial G(F(x))\partial F(x) \}. 
      \end{equation*}
\end{theo}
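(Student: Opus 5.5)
The plan is to avoid the nondifferentiability of $G$ along $F$ by mollifying $G$. We then average over small balls and pass to the limit using upper semicontinuity of Clarke's Jacobian. Set $H\coloneqq G\circ F$, which is Lipschitz near $x$. Let $\rho_\delta$ be a standard mollifier on $\RR^m$ and define $G_\delta\coloneqq\rho_\delta*G$, which is smooth near $F(x)$. Two facts are used. First,
$$DG_\delta(z)=\int\rho_\delta(z-w)DG(w)\,\dd w\in K_\delta(z)\coloneqq\overline{\co}\{DG(w)\,|\,|w-z|\le\delta,\ w\notin\Omega_G\}\subseteq\co\,\partial G(\ov B(z,\delta)).$$
Second, $G_\delta\to G$ uniformly near $F(x)$, with Lipschitz constants bounded uniformly in $\delta$. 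For $H_\delta\coloneqq G_\delta\circ F$ the classical chain rule gives $DH_\delta(y)=DG_\delta(F(y))DF(y)$ as matrices, at every $y\notin\Omega_F$. This matrix-level identity is the reason for mollifying. A direct argument with difference quotients of $G$ only gives, for each direction $v$, that $DH(y)v\in\co\,\partial G(\cdot)\,DF(y)v$, with a matrix from $\co\,\partial G$ that depends on $v$; such directionwise bounds only control a plenary hull, not the convex hull.

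Next we transfer the information from $H_\delta$ to $H$ by averaging. Since $H_\delta\to H$ uniformly with uniformly bounded Lipschitz constants, $DH_\delta\rightharpoonup DH$ weakly$^*$ in $L^\infty$ on a neighbourhood of $x$. Fix $r>0$ and a ball $B\subseteq B(x,r)$. Then
$$\frac1{|B|}\int_B DH\,\dd y=\lim_{\delta\to0}\frac1{|B|}\int_B DG_\delta(F(y))DF(y)\,\dd y .$$
Each average on the right lies in the closed convex hull of $\{M\,DF(y')\,|\,M\in\co\,\partial G(\ov B(F(y'),\delta)),\ y'\in B\setminus\Omega_F\}$. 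Using $DF(y')\in\partial F(y')$, this set is contained in $\mathcal K_{r,\delta}\coloneqq\overline{\co}\{\partial G(\ov B(F(x),Lr+\delta))\,\partial F(\ov B(x,r))\}$, where $L$ is the Lipschitz constant of $F$. Letting $\delta\to0$ and then applying the Lebesgue differentiation theorem at points $y\in B(x,r)\setminus\Omega_H$, we get $DH(y)\in\mathcal K_{r,0}$ for a.e. $y\in B(x,r)$. Here we use that a differentiability point of a Lipschitz map is a Lebesgue point of its a.e. gradient. If needed, we also invoke the fact that Clarke's Jacobian may be computed after discarding any null set.

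Finally, we take limits of gradients $DH(y_n)$ with $y_n\to x$. These limits lie in $\bigcap_{r>0}\mathcal K_{r,0}$, and the convex hull of such limits is $\partial H(x)$. The step I expect to need the most care is proving
$$\bigcap_{r>0}\overline{\co}\{\partial G(\ov B(F(x),Lr))\,\partial F(\ov B(x,r))\}=\co\{\partial G(F(x))\,\partial F(x)\}.$$
This follows from upper semicontinuity and compactness of the Clarke Jacobians (\cref{prop: generalized Jacobian elementary properties}\ref{item: Clarke convex set}), from continuity of matrix multiplication, and from Carathéodory's theorem in the finite-dimensional space $\RR^{k\times d}$. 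Carathéodory lets us write points of the hulls as convex combinations of a fixed number of products and extract convergent subsequences of all factors and weights. A second delicate point is to ensure the weak$^*$ limit identity is applied on genuine neighbourhoods where all Lipschitz bounds hold, so that $r$ and $\delta$ can be sent to $0$ in the stated order.
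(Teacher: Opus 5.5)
Your argument is essentially correct, and it takes a different route from the paper. The paper gives no proof of this inclusion; it imports it from \cite[Theorem~4]{imbert2002support}.

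\textbf{Comparison.} Your diagnosis is the right one. The difference-quotient argument that works for real-valued $G$ controls $\partial(G\circ F)(x)$ only direction by direction, which gives a plenary hull. Mollifying $G$ produces the matrix-level identity $DH_\delta(y)=DG_\delta(F(y))DF(y)$, with $DG_\delta(F(y))$ in the compact convex set $\co\,\partial G(\ov B(F(y),\delta))$. Weak$^*$ convergence $DH_\delta\rightharpoonup DH$ then transfers this membership to averages of $DH$, and your final Carath\'eodory/compactness step is sound. Compared with the citation, your route costs about a page but uses only Rademacher's theorem, weak$^*$ compactness and the closed-graph property.

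Two small remarks on that part:
\begin{itemize}
\item The closed-graph (upper semicontinuity) property of $\partial F$ and $\partial G$ is not among the properties listed in \cref{prop: generalized Jacobian elementary properties}. It must be quoted separately, e.g.\ from \cite{Clarke}.
\item Letting $\delta\to0$ only places the averages in $\bigcap_{\delta>0}\mathcal K_{r,\delta}$. This set is contained in $\mathcal K_{r',0}$ for every $r'>r$, so it is harmless for the final intersection over $r$.
\end{itemize}

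\textbf{A false justification.} The step survives, but your stated reason is wrong. A differentiability point of a Lipschitz map need not be a Lebesgue point of its gradient. For example, $f(s)=s^2\sin(1/s)$ is Lipschitz near $0$ with $f'(0)=0$, yet $\frac1{2h}\int_{-h}^{h}|f'(s)|\,\dd s\to 2/\pi$. So Lebesgue differentiation gives $DH(y)\in\mathcal K_{r,0}$ only for a.e.\ $y$. You would then genuinely need that Clarke's Jacobian can be computed after discarding an arbitrary null set. That is true, but it is a nontrivial theorem for vector-valued maps and is not available in the paper.

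\textbf{How to avoid it.} Set $\phi_s(h)\coloneqq s^2-|h|^2\ge0$ on $B(0,s)$. Integration by parts, with vanishing boundary term, gives
\[
\int_{B(0,s)}\bigl(H(y+h)-H(y)\bigr)h^{\mathsf T}\,\dd h=\frac12\int_{B(0,s)}DH(y+h)\,\phi_s(h)\,\dd h .
\]
Differentiability of $H$ at $y$ makes the left-hand side equal to $DH(y)\frac{s^2|B(0,s)|}{d+2}+o(s^{d+2})=\frac12 DH(y)\int_{B(0,s)}\phi_s+o(s^{d+2})$. Hence at every differentiability point, $DH(y)$ is the limit as $s\to0$ of the $\phi_s$-weighted averages of $DH$ over $B(y,s)$.

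Test the weak$^*$ convergence against these weights instead of indicators of balls. This gives $DH(y)\in\mathcal K_{r',0}$ at every differentiability point $y$ near $x$, not merely almost everywhere. The rest of your argument then goes through unchanged.
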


The following property of the generalized gradient of integral functionals is a consequence of \cite[Theorem 2.7.2]{Clarke}.
We use the Aumann integral for the set-valued map appearing below. More
precisely,
\begin{equation}
\int_{t_1}^{t_2}\partial\big(F(s,\cdot)\big)(x)\dd s
\coloneqq
\left\{
\int_{t_1}^{t_2}C(s)\dd s\,\middle|\,
\begin{array}{l}
C:[t_1,t_2]\to\RR^{d\times d}\text{ is measurable},\\
C(s)\in\partial\big(F(s,\cdot)\big)(x)\text{ for a.e. }s
\end{array}
\right\}.
\label{eq:Aumann_integral}
\end{equation}
Thus, membership in the integral on the right-hand side of the following
inclusion includes the existence of a measurable selection.

\begin{theo}\label{lem: differentiation under the integral sign} 
Let $F\colon[0,T]\times \mathbb{R}^d \to \mathbb{R}^d$ be measurable in the first variable. Assume that there exists  $C_F \in L^1([0,T])$ a.e. positive such that
\begin{equation*}
|F(t,x_1)-F(t,x_2)| \le  C_F(t) |x_1-x_2|, \quad \text{for  a.e. } t \in [0,T], \, \text{for all } x_1,\,x_2 \in \mathbb{R}^d.
\end{equation*}
Then,  for every $t_1,t_2 \in [0,T]$ with $t_2\geq t_1$, we have
\begin{equation*}
    \partial \left ( \int_{t_1}^{t_2} F(s,\cdot)\dd s\right )(x) \subseteq \int_{t_1}^{t_2} \partial \big ( F(s,\cdot) \big )(x)\dd s \qquad  \text{for all } x \in \RR^d.
\end{equation*}
\end{theo}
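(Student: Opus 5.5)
The plan is to argue directly from the definition of the generalized Jacobian rather than by scalarization. The scalar result \cite[Theorem 2.7.2]{Clarke} applied to $u\cdot F$ only controls the pairings $u\cdot Mv$ with rank-one matrices $uv^{\mathsf T}$. That would give an inclusion into the plenary hull of the right-hand side, which is weaker than what is claimed.

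Set $\Phi(x)\coloneqq\int_{t_1}^{t_2}F(s,x)\dd s$; it is Lipschitz with constant $\int_{t_1}^{t_2}C_F$. Let $\mathcal R(x)$ denote the Aumann integral on the right-hand side. First I would check that $\mathcal R(x)$ is convex. If $C,C'$ are measurable selections of $s\mapsto\partial(F(s,\cdot))(x)$, then so is $\lambda C+(1-\lambda)C'$, because each value is convex by \cref{prop: generalized Jacobian elementary properties}\ref{item: Clarke convex set}. Since the left-hand side is a convex hull, it then suffices to show that every limit $M=\lim_n D\Phi(x_n)$, with $x_n\to x$ and $\Phi$ differentiable at $x_n$, belongs to $\mathcal R(x)$.

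Here I would use the fact (Warga's characterization, \cite{Warga}) that in the definition of $\partial\Phi(x)$ one may additionally exclude any prescribed Lebesgue-null set of points $x_n$. The null set I want to avoid is the following. The set $\{(s,y)\mid F(s,\cdot)\text{ is not differentiable at }y\}$ is null by Rademacher applied for a.e. $s$, provided its joint measurability is handled. By Fubini, there is then a full-measure set $E\subset\RR^d$ such that, for every $y\in E$, $F(s,\cdot)$ is differentiable at $y$ for a.e. $s$. For $y\in E$, the difference quotients of $F(s,\cdot)$ at $y$ are dominated by $C_F(s)$. Dominated convergence gives directional derivatives of $\Phi$ at $y$ equal to $\int D_xF(s,y)v\dd s$, and this is linear in $v$. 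Combined with the Lipschitz property, this yields $D\Phi(y)=\int_{t_1}^{t_2}D_xF(s,y)\dd s$ after possibly shrinking $E$ to the differentiability points of $\Phi$. So I may assume $x_n\in E$ and write $D\Phi(x_n)=\int C_n(s)\dd s$ with $C_n(s)\coloneqq D_xF(s,x_n)$.

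Next I would pass to the limit in the integrands. We have $\|C_n(s)\|\le C_F(s)$ with $C_F\in L^1$, so by Dunford--Pettis a subsequence converges weakly in $L^1$ to some measurable $C$, and then $M=\int C\dd s$. To show $C(s)\in\partial(F(s,\cdot))(x)$ for a.e. $s$, I would use Mazur's lemma: convex combinations of tails of $(C_n)$ converge strongly, hence a.e. along a further subsequence. This places $C(s)$ in $\bigcap_N\overline{\co}\{C_n(s)\mid n\ge N\}$. Moreover $C_n(s)\in\partial(F(s,\cdot))(x_n)$ by \cref{prop: generalized Jacobian elementary properties}\ref{item: Clarke diff}. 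Upper semicontinuity of the Clarke Jacobian, together with its convex compact values, then shows that this intersection is contained in $\partial(F(s,\cdot))(x)$. Hence $M\in\mathcal R(x)$. By convexity of $\mathcal R(x)$, the whole of $\partial\Phi(x)$ lies in $\mathcal R(x)$, which is the claimed inclusion.

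I expect the main obstacle to be the measure-theoretic justification in the second paragraph. It requires the joint measurability of the non-differentiability set, which follows from measurability in $s$ plus continuity in $x$. It also requires that $\Phi$ is genuinely differentiable, and not merely Gateaux differentiable along countably many directions, on a full-measure set where the integral formula holds. For the latter I would intersect $E$ with the Rademacher set of $\Phi$ and identify $D\Phi$ through directional derivatives along a dense countable set of directions, using the Lipschitz bound to pass to all directions.
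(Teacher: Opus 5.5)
Your proof is correct, and it takes a genuinely different route from the paper. The paper gives no argument beyond attributing the statement to \cite[Theorem 2.7.2]{Clarke}. That theorem concerns generalized gradients of \emph{scalar} integral functionals and rests on the generalized directional derivative $f^\circ(x;v)$, Fatou's lemma, and a support-function/measurable-selection argument.

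Your remark is accurate: applying that theorem to $u\cdot F$ only controls the pairings $u^{\mathsf T}Mv$, so it yields inclusion only in the plenary hull of the Aumann integral. The citation therefore leaves the passage from gradients to Jacobians implicit, and your argument supplies it.

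The Jacobian has no difference-quotient description analogous to $f^\circ$. You compensate in two steps:
\begin{itemize}
\item You use null-set blindness \cite{Warga} to restrict to points where $D\Phi(y)=\int_{t_1}^{t_2}D_xF(s,y)\dd s$.
\item Dunford--Pettis, Mazur and upper semicontinuity then produce the required selection $C$ directly as a weak $L^1$ limit.
\end{itemize}
This frees you from measurable-selection theorems, from the measurability of the multifunction $s\mapsto\partial(F(s,\cdot))(x)$, and from the closedness of the Aumann integral. A Clarke-style support-function argument transplanted to matrices would need all three, and it would still need null-set blindness to pass $D\Phi$ under the integral.

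The price is twofold. First, you rely on Warga's theorem, which is nontrivial. Second, you need joint measurability of the non-differentiability set. You rightly flag this, and it is routine: differentiability of the Lipschitz map $F(s,\cdot)$ at $y$ can be expressed through countably many rational quantifiers over jointly measurable conditions.

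Two minor points. A Lipschitz map on $\RR^d$ that is G\^ateaux differentiable at $y$ is already Fr\'echet differentiable there, so neither shrinking $E$ nor the countable-direction argument is needed. Also, like the statement itself, you tacitly need $F(\cdot,x_0)\in L^1$ for some $x_0$ for $\Phi$ to be defined.
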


\section{Appendix}\label{appendix: exact solution}
In what follows, we compute explicitly the unique (see  \cite{BensoussanETal}) solution $(v^*,m^*)$ to the MFG system appearing in \Cref{testexaxt},
where $H(t,x,p)=\frac{1}{2}|p|^2+ \hat \alpha(t)\cdot p$,  
\[
\ell(t,x,a)=\frac{1}{2}|a-\hat\alpha(t)|^2,
\qquad
\hat \alpha(t)=
\alpha_1\mathbb{I}_{[0,\overline t)}(t) +\alpha_2\mathbb{I}_{[\overline t,T]}(t),
\quad \alpha_1,\alpha_2\in\mathbb R^d,\, \overline t\in[0,T],
\]
 $  f(t,x,m)=\frac{1}{2}|x-\int_{\mathbb{R}^d} y m(y)\dd y|^2$, $g(x,m)= 0$, and  initial measure $m^*_0$  given as  the density of a Gaussian  with mean $\mu_0\in\RR^d$ and covariance  $\Sigma_0\in \RR^{d\times d}$ assumed, for simplicity, diagonal.
We first look for functions $\Pi:[0,T] \to\RR^{d\times d}$, $s \colon  [0,T] \to\RR^d$, $c \colon  [0,T] \to\RR$ such that  the $L^1-$ viscosity solution to \eqref{eq:HJB} with $\mu=m^*$  can be written as  
\begin{equation*}
    v^*(t,x)=\frac{1}{2}  \Pi (t)x\cdot x  + s(t)\cdot x + c(t) \qquad \text{for all }(t,x)\in [0,T]\times \RR^d.
\end{equation*}
Setting $\ov Y(t)\coloneqq \int_{\RR^d}xm^*(t,x)\dd x$ and requiring $v^*$ to be a generalized solution to \eqref{eq:HJB} with $\mu=m^*$,  we obtain that $\Pi,s$ and $c$ satisfy
\begin{equation*}
    \setlength{\jot}{8pt}
    \begin{aligned}
        - \dot   \Pi (t) & = -   \Pi ^2(t) + I_d
        && \text{for a.e. } t\in(0,T),\\
        -\dot s(t) & = -  \Pi (t)s(t) - \Pi (t)\hat \alpha(t)-\ov Y(t) 
        && \text{for a.e. } t\in(0,T),\\
        -\dot c(t) & = -\smash{\frac{1}{2}}|s(t)|^2+\smash{\frac{1}{2}} |\ov Y(t)|^2-\hat \alpha(t)\cdot s(t)
        && \text{for a.e. } t\in(0,T),\\
        \Pi(T)&=0,\quad s(T)=0,\quad c(T)=0.
    \end{aligned}
\end{equation*}
Since $D_xv^*(t,x)=  \Pi(t)x+s(t)$  for every $(t,x)\in [0,T]\times \RR^d$, then   $D_pH(t,x,D_xv^*(t,x))=D_xv^*(t,x)+\hat \alpha(t)$.
Arguing as in \cite{CalzolaCarliniSilva2024}, we derive that the explicit formula of $m^*$ is given by
 \begin{equation*}
     m^*(t,x)=\prod_{i=1}^dm_i^*(t,x) \qquad \text{for all } (t,x)\in (0,T)\times \RR^d,
 \end{equation*}
where, for every $t\in [0,T]$ and $i=1,\dots,d $, the density $m_i^*(t,\cdot)$ is a univariate Gaussian with mean $(\ov Y(t))_i$ and covariance $(\Sigma(t))_{i,i}$, where $\Sigma(t)\in \RR^{d\times d}$ .
 The expectation
 $\ov Y(t)$ solves
\begin{equation*}
    \begin{aligned}
&\dot{ \ov Y}(t)=-\Pi(t) \ov Y(t) - s(t)-\hat \alpha(t)  & \text{for a.e. } t\in(0,T),\\
& \ov Y(0)= \mu_0.
\end{aligned}
\end{equation*}
We notice that  $\Pi$ 
satisfies a Riccati equation whose analytical solution is given by
$\Pi(t)= p(t)I_d$, where   $p(t)=\tanh(T-t)$. Once  $\Pi$ is determined,  the solution to the system 
\begin{equation*}
    \begin{aligned}
        -\dot s(t) & = -\Pi(t)s(t)-\Pi(t)\hat\alpha(t)- \ov Y(t)   & \text{for a.e. } t\in(0,T),\\ 
        \dot{\ov  Y}(t)&=-\Pi(t)\ov  Y(t) - s(t)-\hat\alpha(t)  & \text{for a.e. } t\in(0,T), \\
         s(T)&=0, \quad \ov Y(0)= \mu_0
    \end{aligned}
\end{equation*}
is given by
\begin{equation*}
 \ov Y(t)=
\begin{cases}
 \mu_0 -\alpha_1t &\text{for } t<  \ov t, \\ 
 \mu_0 -\alpha_1 \ov t - \alpha_2(t-\ov t)   &\text{for } t\geq  \ov t,
\end{cases}
\end{equation*}
and  $s(t)=-p(t) \ov Y(t)$.
Therefore, we have $|s(t)|^2 
= p^2(t)|\ov  Y(t) |^2$ and $\hat \alpha(t)\cdot s(t)=-p(t)\hat \alpha(t)\cdot \ov Y(t)$, which imply that $c$ solves the system 
\begin{equation*}\label{c_dot_uv} 
\begin{aligned}
\dot c(t)
&=\frac{1}{2} p^2(t) |\ov Y(t)|^2-p(t) \hat\alpha(t)\cdot \ov Y(t) - \frac{1}{2} |\ov Y(t)|^2 \qquad \text{for a.e. } t \in (0,T),\\ 
c(T)&=0.
\end{aligned}
\end{equation*}
Integrating  on $[t,T]$ for $t \in [0,T]$, the system yields
\begin{equation*}
c(t)=-\int_t^T \frac{1}{2} p^2(r) |\ov Y(r)|^2\dd r+\int_t^T p(r) \hat\alpha(r) \cdot \ov Y(r) \dd r+ \int_t^T \frac{1}{2} |\ov Y(r)|^2\dd r,
\end{equation*}
which completes the explicit formula for $v^*$.
The covariance $\Sigma$ can be computed, once $\Pi,s$ and $\ov Y$ have been determined,  following \cite{CalzolaCarliniSilva2024}, as
\begin{equation*}
\Sigma(t)=\left(\frac{\cosh(T-t)}{\cosh(T)}\right)^2 \Sigma_0,
\qquad \text{for all } t\in[0,T].
\end{equation*}
The previous computations establish that 
$v^*$ solves \eqref{eq:HJB} with $\mu=m^*$ for every $(t,x)$ with $t\neq\overline{t}$ and that $m^*$ is a distributional solution to \eqref{eq:CE} thanks to \cite[Lemma 8.1.6]{AmbrosioGigliSavare}. It remains to prove that $v^*$ is a $L^1$-viscosity solution.
We have that $v^*(x,T)=g(x)=0$. Let $(t^*,x^*)\in (0,T)\times \RR^d$, $\phi \in \CC^{\infty}_c((0,T)\times \RR^d)$, $b \in L^1([0,T])$, and  $G\in \CC([0,T]\times \RR^d\times \RR^d)$  as in \cref{def: L1viscosity} for the subsolution property.  Let $R^*>0$ be sufficiently small so that   $ [t^*-R^*,t^*+R^*]\subset(0,T)$ and $(t^*,x^*)$ is a maximum of $(t,x) \mapsto v[\mu](t,x)-\phi(t,x)- \displaystyle\int_t^Tb(s)\dd s$ over $[t^*-R^*,t^*+R^*]\times  \ov B(x^*,R^*)$. We have that 
 \begin{equation}\label{eq: max}
     v^*(t,x^*)-v^*(t^*,x^*)\leq \phi(t,x^*)-\phi(t^*,x^*)-\int_{t^*}^tb(s)\dd s \quad\text{for all } t\in [t^*-R^*,t^*+R^*]
 \end{equation}
 and 
 \begin{equation}\label{eq: gradient}
     D_xv^*(t^*,x^*)=  D_x\phi(t^*,x^*).
 \end{equation}
 Moreover, since  the map $t \mapsto v^*(t,x^*)$  is Lipschitz, \eqref{eq: max} implies that
 \begin{equation}\label{eq: TFC}
     \int_{t^*}^{t}(\partial_tv^*(s,x^*)-\partial_t\phi(s,x^*)+b(s) )\dd s\leq 0 \quad\text{ for all } t \in [t^*,t^*+R^*].
 \end{equation}
Since $v^*$ solves the HJB equation  for every $t \neq \ov t$, we have 
\begin{equation*}
     -\partial_tv^*(t,x^*)+H(t,x^*,D_xv^*(t,x^*))=f(t,x^*,m(t)) \quad \text{for all } t\neq\ov t,
 \end{equation*}
 which, combined with \eqref{eq: TFC}, implies 
 \begin{equation*}
 \begin{aligned} 
   0&=  \int_{t^*}^{t}(-\partial_tv^*(s,x^*)+H(s,x^*,D_xv^*(s,x^*))-f(s,x^*,m(s)))\dd s \\
   &\geq \int_{t^*}^{t}(-\partial_t\phi(s,x^*)+b(s)+H(s,x^*,D_xv^*(s,x^*))-f(s,x^*,m(s)))\dd s \\ 
   & \geq \int_{t^*}^{t}(-\partial_t\phi(s,x^*)+G(s,x^*,D_xv^*(s,x^*)))\dd s.
    \end{aligned}
 \end{equation*}
Dividing by $(t-t^*)$, we have 
   \begin{equation*}
 \begin{aligned} 
   0& \geq \frac{1}{t-t^*} \int_{t^*}^{t}(-\partial_t\phi(s,x^*)+G(s,x^*,D_xv^*(s,x^*)))\dd s,
    \end{aligned}
 \end{equation*}
 and, since the map $t \mapsto D_xv^*(t,x^*)$ is continuous, passing to the limit as  $t \to t^*$ and using \eqref{eq: gradient}, we get $-\partial_t\phi(t^*,x^*)+G(t^*,x^*,D_x\phi(t^*,x^*))\leq 0$. Analogously, we can obtain that $v^*$ is a $L^1$-viscosity supersolution. 
\bibliographystyle{abbrv}   
\bibliography{references}
\end{document}